\documentclass[12pt]{amsart}
\usepackage{statmath}

\usepackage[utf8]{inputenc}
\usepackage {amssymb}
\usepackage {amsmath}
\usepackage{amsthm}
\usepackage{graphicx}
\usepackage {amscd}
\usepackage {epic}
\usepackage{xypic}
\usepackage[colorlinks, citecolor=blue]{hyperref}
\usepackage[left=1.5in,right=1.5in,top=1.5in,bottom=1.5in]{geometry}
\usepackage{soul}
\usepackage{comment}
\usepackage{enumitem}
\usepackage{mathrsfs}


\theoremstyle{plain}
\newtheorem{theorem}{Theorem}[section]
\newtheorem{corollary}[theorem]{Corollary}
\newtheorem{lemma}[theorem]{Lemma}
\newtheorem{conjecture}[theorem]{Conjecture}

\newtheorem{definition-lemma}[theorem]{Definition-Lemma}
\theoremstyle{remark}
\newtheorem{remark}[theorem]{Remark}

\theoremstyle{definition}



\newcommand{\Pic}[0]{\operatorname{Pic}}


\def\BC{\operatorname{BC}}
\def\NA{\overline{\operatorname{NA}}}

\def\>{\geq}
\newcommand{\mbQ}{\mathbb{Q}}
\newcommand{\mbR}{\mathbb{R}}

\def\mcO{\mathcal{O}}
\newcommand{\num}{\equiv}

\newcommand{\OO}{{\mathcal{O}}}
\newcommand{\Q}{{\mathbb{Q}}}
\newcommand{\C}{{\mathbb{C}}}
\newcommand{\R}{{\mathbb{R}}}

\newcommand{\Supp}{{\rm Supp}}

\newcommand{\mbC}{\mathbb{C}}
\newcommand{\mbZ}{\mathbb{Z}}

\newcommand{\vphi}{\varphi}

\def\oomega{\boldsymbol{\omega}}

\def\ggamma{\boldsymbol{\gamma}}

\def\bbeta{\boldsymbol{\beta}}

\def\>{\geq}

\def\mcO{\mathcal{O}}

\def\mcC{\mathcal{C}}

\def\mcH{\mathcal{H}}
\def\mcK{\mathcal{K}}

\def\eps{\epsilon}


\def\pt{\operatorname{pt}}

\def\dim{\operatorname{dim}}

\def\NA{\operatorname{\overline{NA}}}

\def\Nef{\operatorname{Nef}}
\def\Im{\operatorname{Im}}
\def\Re{\operatorname{Re}}

\def\Int{\operatorname{Int}}
\def\Alb{\operatorname{Alb}}

\theoremstyle{definition}
\newtheorem{definition}[theorem]{Definition}
\theoremstyle{definition}

\numberwithin{equation}{section}

\theoremstyle{remark}

\author{Omprokash Das}
\address{School of Mathematics\\
Tata Institute of Fundamental Research\\
Homi Bhabha Road, Navy Nagar\\
Colaba, Mumbai 400005}
\email{omdas@math.tifr.res.in}
\email{omprokash@gmail.com}

\author{Christopher Hacon}
\address{Department of Mathematics\\
University of Utah\\
155 S 1400 E\\
Salt Lake City, Utah 84112}
\email{hacon@math.utah.edu}
\thanks{Christopher Hacon was partially supported by NSF research grants no: DMS-1952522, DMS-1801851, DMS-2301374 and by a grant from the Simons Foundation SFI-MPS-MOV-00006719-07. We would like to thank M. Paun, V. Tosatti, and an anonimous referee for their comments and suggestions.}



\begin{document}

\title[Transcendental MMP for Projective Varieties]{Transcendental Minimal Model Program for Projective Varieties}
\maketitle

\begin{abstract}
In this article we prove that if $(X,B+\bbeta )$ is a projective generalized klt pair such that $B+\bbeta_X$ is big, then 
$(X,B+\bbeta )$ admits a good minimal model or Mori fiber space. In particular, this implies Tossati's transcendental base-point-free conjecture for projective manifolds.

\end{abstract}

\tableofcontents

\section{Introduction}
The purpose of this paper is to prove the minimal model program for projective generalized klt pairs (see Definition \ref{subs:g-pair}). A typical generalized klt pair $(X,B+N)$ consists of a smooth projective variety $X$, a divisor $B=\sum b_iB_i$ with simple normal crossings support and coefficients $0<b_i <1$, and a nef $\R$-Cartier divisor $N$. These pairs play an important role in the birational classification of projective varieties and especially in Kawamata's canonical bundle formula, Kawamata subadjunction, and the boundedness of Fano varieties (see \cite{Filipazzi20}, \cite{Bir21}).
In the K\"ahler context, it is however important and more natural to consider a more flexible version where  the nef
divisor $N$ is replaced by a nef (1,1) form $\beta$. If $X$ is not projective, there are in fact typically very few divisor classes on $X$ (and in particular no ample divisors), however there are usually plenty of $(1,1)$ classes in the Bott-Chern cohomology group $H^{1,1}_{\rm BC}$ (including K\"ahler classes).
Just as in the projective case, the geometry of a K\"ahler variety $X$ is controlled to a large extent by several cones in $H^{1,1}_{\rm BC}(X)$ such as the nef, K\"ahler, pseudo-effective, and big cones. In particular, important results on the K\"ahler-Ricci flow are informed by results about generalized pairs. To this end we have the following conjecture (see \cite[Question 5.5]{Tos12}, \cite[Conjecture 1.2]{FT18}, and \cite[Conjecture 3.6]{Tos23}) which is essential in understanding the finite time singularities of the K\"ahler Ricci flows on $X$.
\begin{conjecture}\label{c-tossati} 
Let $X$ be a compact K\"ahler manifold and $\alpha \in H^{1,1}_{\rm BC}(X)$ a nef class such that $\alpha -K_X$ is nef and big, then $\alpha $   is semiample (i.e. there is a holomorphic map $g:X\to Z$ and a K\"ahler form $\omega _Z$ such that $\alpha \equiv g^*\omega _Z$). 
\end{conjecture}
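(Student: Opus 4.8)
The plan is to deduce the conjecture, in the projective case treated in this paper, directly from our main theorem on good minimal models for projective generalized klt pairs. I therefore assume throughout that $X$ is a smooth projective variety, so that $\alpha$ is a real Bott--Chern $(1,1)$-class that need not be represented by an $\R$-divisor; this is precisely the transcendental feature that the generalized pair formalism is built to accommodate. The first step is to set $\bbeta := \alpha - K_X$, so that by hypothesis $\bbeta$ is nef and big and
\[
K_X + \bbeta \equiv \alpha .
\]
Taking $B := 0$, I claim that $(X, B+\bbeta) = (X,\bbeta)$ is a projective generalized klt pair. Indeed, since $X$ is smooth and $B=0$, on a log resolution $f\colon Y \to X$ on which $\bbeta$ is pulled back one computes the boundary from $K_Y + B_Y + f^*\bbeta \equiv f^*(K_X + \bbeta)$, giving $B_Y = -K_{Y/X}$, whose coefficients are all $\le -1 < 1$; hence every generalized log discrepancy is positive and the pair is klt. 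Moreover $B+\bbeta = \bbeta$ is big, so the hypotheses of the main theorem hold.

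Next I would apply the main theorem to conclude that $(X,\bbeta)$ admits either a good minimal model or a Mori fiber space, and then rule out the latter. A Mori fiber space is produced only when the generalized log canonical class $K_X + \bbeta$ fails to be pseudo-effective; but $K_X + \bbeta \equiv \alpha$ is nef, hence lies in the pseudo-effective cone. Therefore $(X,\bbeta)$ has a good minimal model $X'$, on which the transform $K_{X'} + \bbeta'$ is nef and semiample.

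It then remains to descend good-ness back to $X$ itself. The key observation is that $\alpha \equiv K_X + \bbeta$ is \emph{already} nef, so $X$ is its own minimal model for the pair $(X,\bbeta)$: the $(K_X+\bbeta)$-MMP with scaling contracts and flips nothing. I would then compare $X$ with the good minimal model $X'$ on a common resolution $p\colon W \to X$, $q\colon W \to X'$. Since $\alpha$ is nef and $X'$ is a minimal model, the generalized negativity lemma forces the numerical identity $p^*\alpha \equiv q^*(K_{X'}+\bbeta')$; as $K_{X'}+\bbeta'$ is semiample, so is $p^*\alpha$, and hence $\alpha$ itself. Unwinding semiampleness in the transcendental category yields a holomorphic map $g\colon X \to Z$ and a Kähler form $\omega_Z$ with $\alpha \equiv g^*\omega_Z$, as required.

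Within this deduction the substantive content sits in two places. The first, and by far the hardest, is the main theorem itself --- the existence of good minimal models for projective generalized klt pairs with big boundary in the transcendental setting --- which is the core of the paper and here assumed. The second, genuinely technical obstacle is the descent step: one must guarantee that the numerical identity $p^*\alpha \equiv q^*(K_{X'}+\bbeta')$ holds on the nose, which requires the negativity lemma in the generalized, transcendental form, and one must then upgrade abstract semiampleness of a Bott--Chern class to an honest morphism $g$ with a Kähler class on the base, in place of the classical section-ring argument available only for $\R$-divisor classes. Care is also needed to verify that $(X,\bbeta)$ is genuinely generalized klt and that bigness of $\alpha - K_X$ transfers correctly to the boundary $B+\bbeta$ of the pair.
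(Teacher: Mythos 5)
Your proposal follows essentially the same route as the paper: form the generalized klt pair $(X,\bbeta)$ with $\bbeta=\alpha-K_X$ (so $B+\bbeta$ is big and $K_X+\bbeta_X\equiv\alpha$ is nef, hence pseudo-effective), invoke Theorem \ref{t-main} to obtain a good minimal model, and descend semiampleness back to $X$ using that $X$ is already a minimal model of the pair --- this is precisely the content of Corollary \ref{c-tran} and its specialization Corollary \ref{cor:tossati-conj}, where the paper carries out your descent step via the rigidity lemma rather than a common-resolution comparison. The only difference is cosmetic: the paper first passes to a small $\Q$-factorialization so as to treat the general klt-pair version, a step that is unnecessary when $X$ is a smooth projective manifold.
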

Note that when $\alpha$ is the class of an $\R$-divisor, then the conjecture is known by the base point free theorem (see eg. \cite{KM98}) and when $\dim X\leq 3$, this is a consequence of the generalized minimal model program established in \cite{DHY23}.
The main result of this paper is the following.
\begin{theorem}\label{t-main}
    Let $(X,B+\bbeta)$ be a compact K\"ahler generalized klt pair such that $B+\bbeta _X $ is big. Assume that $X$ is a $\mbQ$-factorial projective variety over $\mbC$. Then the following hold: 
    \begin{enumerate}
        \item if $K_X+B+\bbeta_X$ is pseudo-effective, then $(X,B+\bbeta)$ has a good minimal model \textcolor{blue}{(see \S \ref{subs:g-pair})}, and
        \item if $K_X+B+\bbeta_X$ is not pseudo-effective, then $(X,B+\bbeta)$ has a Mori fiber space.
    \end{enumerate}
\end{theorem}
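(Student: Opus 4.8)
The plan is to run the $(K_X+B+\bbeta_X)$-minimal model program directly, exploiting two features that tame the transcendental nature of $\bbeta$. First, since $\bbeta_X$ is nef, every curve $C$ satisfies $\bbeta_X\cdot C\geq 0$, so any $(K_X+B+\bbeta_X)$-negative extremal ray $R\subset \NE(X)$ automatically has $(K_X+B)\cdot R<0$. Hence the relevant part of the cone of curves is governed entirely by the \emph{divisorial} klt pair $(X,B)$: by the classical cone and contraction theorems on the projective variety $X$, such rays are discrete, spanned by rational curves, and admit extremal contractions $\phi\colon X\to Z$. Second, for such a contraction the relative Picard number is one, so $N^1(X/Z)_{\mbR}\cong\mbR$; the relative numerical class of $\bbeta_X$ is therefore pinned down by the single number $\bbeta_X\cdot R\geq 0$, and one may choose a suitable effective $\mbR$-divisor $D\geq 0$ with $(X,B+D)$ klt and $K_X+B+D\equiv_Z K_X+B+\bbeta_X$. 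Thus, although $\bbeta$ is genuinely transcendental globally, it becomes numerically divisorial over the base of each extremal contraction.

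With this in hand, I would show that each step of the program exists. Throughout I track $\bbeta$ as a fixed nef class on a single resolution dominating all the models that occur, so that pushforwards and strict transforms are unambiguous and $\bbeta$ remains nef at every stage. If $\phi$ is divisorial or of fiber type, we carry it out and take the pushforward of $\bbeta$. If $\phi$ is a flipping contraction, we construct the flip of the transcendental pair by reduction to the divisorial case: the flip $X^+\to Z$ of the klt pair $(X,B+D)$ exists by the existence of flips, and since $K_X+B+D\equiv_Z K_X+B+\bbeta_X$, taking $\bbeta^+$ to be the strict transform of $\bbeta$ gives $K_{X^+}+B^++\bbeta^+_{X^+}\equiv_Z K_{X^+}+B^++D^+$, which is $\phi^+$-ample. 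Hence $(X^+,B^++\bbeta^+)$ is the required flip and generalized klt-ness is preserved. The one caveat is that the auxiliary divisor $D$ depends on the ray being contracted and cannot be fixed globally, which is exactly where the problem departs from a direct appeal to the divisorial program.

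The main obstacle is termination. I would run the program with scaling of a fixed Kähler (for projective $X$, ample) class $\omega$, producing a nonincreasing sequence of scaling thresholds, and use the hypothesis that $B+\bbeta_X$ is big, which is preserved throughout. By special termination and induction on $\dim X$ it suffices to exclude an infinite sequence of flips; I would do so by comparing, on a common resolution, the transcendental log canonical class with the family of divisorial classes $K_X+B+D$ realizing it over successive extremal contractions, so that bigness of $B+\bbeta_X$ forces these models into a single bounded family governed by a klt pair with big boundary, whence finiteness follows from the finiteness of models of Birkar--Cascini--Hacon--McKernan. Reconciling the scaling argument with the per-step change of $D$ is the delicate point, and I expect this step to require the most work.

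Finally, I would conclude as follows. In case (1), when $K_X+B+\bbeta_X$ is pseudo-effective, the program terminates with a model $(X',B'+\bbeta')$ on which $K_{X'}+B'+\bbeta'_{X'}$ is nef; since $B'+\bbeta'_{X'}$ is still big, semiampleness follows from the base-point-free theorem for generalized klt pairs, which in this big-boundary setting reduces to the classical base-point-free theorem after absorbing a big part of the boundary into an ample class. This yields a \emph{good} minimal model, and specializing to $B=0$ and $\bbeta_X=\alpha-K_X$ recovers Conjecture \ref{c-tossati} for projective manifolds. In case (2), $K_X+B+\bbeta_X$ is not pseudo-effective, so no nef model can be reached and the program must terminate with a fiber-type extremal contraction $\phi^+\colon X'\to Z$ with $\dim Z<\dim X'$ and $-(K_{X'}+B'+\bbeta'_{X'})$ relatively ample, which is the desired Mori fiber space.
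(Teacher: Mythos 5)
Your strategy --- running the $(K_X+B+\bbeta_X)$-MMP directly and replacing $\bbeta_X$ by an auxiliary divisor $D$ over the base of each extremal contraction --- is genuinely different from the paper's, and the two places you yourself flag as delicate are real gaps that the sketch does not close. First, termination. The finiteness-of-models machinery of \cite{BCHM10} applies to a fixed klt pair whose boundary varies in a polytope of big divisors, whereas your auxiliary divisors $D$ are chosen anew at each step and are only numerically equivalent to $\bbeta_X$ \emph{relatively} over the successive bases $Z_i$; there is no single divisorial pair of which your sequence of flips is a sequence of MMP steps, so the appeal to finiteness of models does not go through, and "special termination and induction on dimension" has no foothold either since the class is transcendental on every stratum. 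The missing idea is the Garf-Kirschner decomposition $H^2(X,\mbR)=N^1(X)_{\mbR}\oplus T(X)$ (\S \ref{subs:GK-decomposition}, Lemmas \ref{lem:h2-decomposition} and \ref{lem:trans-alg}): it lets one write $\bbeta_X\equiv N+\delta$ \emph{globally}, with $N$ a nef $\mbR$-divisor and $\delta$ orthogonal to every curve class, so that $K_X+B+\bbeta_X$ and $K_X+\Delta$ (with $\Delta$ a big klt boundary absorbing $B+N$, Lemma \ref{l-gklt2}(2)) have identical intersection numbers with all curves. The whole MMP is then literally a $(K_X+\Delta)$-MMP with scaling, and termination is exactly \cite[Corollary 1.4.2]{BCHM10}; no per-step bookkeeping is needed.

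Second, the conclusion in case (1) is not a base-point-free theorem. Once $K_{X'}+B'+\bbeta'_{X'}$ is nef it still carries a transcendental component, and "semiample" here means realized as $g^*\omega_Z$ for a K\"ahler class on the base of a contraction; asserting this outright is essentially restating Conjecture \ref{c-tossati}, so it cannot be discharged by "absorbing a big part of the boundary into an ample class." The paper applies classical base point freeness only to the divisor $K_{X'}+\Delta'$ to obtain $g\colon X'\to Z$, shows that the transcendental part descends, $\delta'=g^*\delta_Z$ (Lemma \ref{lem:trivial-pullback}, which needs $Z$ to have rational singularities via \cite[Lemma 8.8]{DHP22}), and then must prove that the descended class $\alpha_Z$ is K\"ahler. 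That last step requires Corollary \ref{c-nk} to produce an $\alpha_Z$-trivial rational curve if $\alpha_Z$ is big but not K\"ahler, and a separate MRC-fibration argument using \cite[Theorem 5.2]{CH20} to rule out the possibility that $\alpha_Z$ fails to be big. None of this appears in your sketch, and it is where most of the actual work of the theorem lives.
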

In particular, this result implies Conjecture \ref{c-tossati} for projective varieties with mild singularities (see Corollary  \ref{cor:tossati-conj}).
It should be mentioned that generalized pairs were formally introduced in \cite{DHY23},
however, related results were quite common in the literature (see for example \cite{Gue20} and \cite{CH20}).
Note that when $\bbeta =0$, Theorem \ref{t-main} is the main result of \cite{BCHM10}. When $\bbeta $ is the class of a nef b-divisor, the result also follows from  \cite{BCHM10} by standard arguments.
In the general case we make use of the \emph{Graf-Kirschner decomposition} (see \S \ref{subs:GK-decomposition}) which allows us to write (up to some birational modification) $\bbeta_{X} \equiv N+\delta $ where $N$ is a nef $\R$-divisor and $\delta \in H^{1,1}_{\rm BC}(X)$ satisfies $\delta \cdot C=0$ for any curve $C$ on $X$.
We then show that a good minimal model for the generalized klt pair $(X,B+N)$ is also a good minimal model for the generalized klt pair $(X,B+\bbeta )$.

\section{Preliminaries}
We begin by fixing some notation. Let $X$ be a normal compact analytic variety. We let $N^1(X):=H^{1,1}_{\rm BC}(X)$ the Bott-Chern cohomology group and $N_1(X)$ be the dual space of real closed bi-dimension $(1,1)$ currents modulo the equivalence relation: $T_1\num T_2$ if and only if $T_1(\eta)=T_2(\eta)$ for all $\eta\in H^{1,1}_{\BC}(X)$. Let ${\Nef}(X)\subset N^1(X)$ (resp. $\mcK(X)\subset N^1(X)$) denote the nef cone (resp. K\"ahler cone), and $\NA(X)\subset N_1(X)$ the generalized Mori cone. See \cite[Definition 2.2]{DH20} for more details.  
If $X$ is a projective variety and $D$ is an $\mbR$-Cartier divisor, then from \cite[Corollary 0.2]{DP04} it follows that the class $[D]\in H^{1,1}_{\rm BC}(X)$ is nef (resp. K\"ahler) if and only if the divisor $D$ is nef in the usual sense (resp. ample), i.e. $D\cdot C\geq 0$ for every curve $C\subset X$ (resp. $0\ne D=\sum r_iA_i$ where $A_i$ is very ample and $r_i>0$).


 Let $T$ be a closed positive bi-degree $(1,1)$ current with local potentials and $f:X'\to X$ a proper bimeromorphic morphism. Then the pullback $f^*T$ can be defined in the following way: there is an open cover $\{U_i\}$ of $X$ and \emph{psh} functions $\vphi_i:U_i\to \mbR\cup\{-\infty\}$ such that $T|_{U_i}=dd^c\vphi_i$. Then $f^*\vphi_i:=\vphi_i\circ f$ is a psh function on $U'_i:=f^{-1}U_i$. We then define $f^*T$ by $f^*T|_{U'_i}:=dd^c(\vphi_i\circ f)$.

\begin{lemma}\label{lem:kahler-mori-pushforward}
    Let $f:X\to Y$ be a proper morphism with connected fibers between normal compact K\"ahler varieties with rational singularities. Then $f_*\NA(X)=\NA(Y)$. 
\end{lemma}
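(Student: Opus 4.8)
The plan is to pass to the dual cones and reduce the statement to the fact that $f^*$ detects nefness. Recall that the pushforward $f_*\colon N_1(X)\to N_1(Y)$ is the adjoint of the pullback $f^*\colon N^1(Y)\to N^1(X)$, and that, by construction of the generalized Mori cone (see \cite{DH20}), $\Nef(X)=\NA(X)^{\vee}$ and $\Nef(Y)=\NA(Y)^{\vee}$. Since $f$ is surjective, $f^*$ is injective (here the rational singularities guarantee $f_*\mcO_X=\mcO_Y$ and that the relevant Bott--Chern classes behave well), so $f_*$ is surjective. First I would dispose of the inclusion $f_*\NA(X)\subseteq \NA(Y)$: if $T$ is a closed positive bidimension $(1,1)$ current on $X$, then $f_*T$ is again a closed positive bidimension $(1,1)$ current on $Y$ (using that $f$ is proper, so that $f_*$ preserves positivity, closedness, and bidimension), and its class is $f_*[T]$; since such classes generate $\NA(Y)$ and $f_*$ is continuous, the inclusion follows. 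Dually this says $\alpha\in\Nef(Y)\Rightarrow f^*\alpha\in\Nef(X)$, which is also immediate from $f^*\alpha\cdot T=\alpha\cdot f_*T\ge 0$.

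For the reverse inclusion I would exploit the duality. Taking dual cones and using the adjunction $\alpha\cdot f_*T=f^*\alpha\cdot T$, one computes $(f_*\NA(X))^{\vee}=(f^*)^{-1}\Nef(X)$. Since $(f^*)^{-1}\Nef(X)$ is a closed convex cone, double-dualization shows that the equality $\overline{f_*\NA(X)}=\NA(Y)$ is equivalent to
\[
(f^*)^{-1}\Nef(X)=\Nef(Y),
\]
that is, to the assertion that $f^*\alpha$ nef implies $\alpha$ nef. To establish this I would fix $\alpha\in N^1(Y)$ with $f^*\alpha$ nef and verify $\alpha\cdot[S]\ge 0$ for every closed positive bidimension $(1,1)$ current $S$ on $Y$; granting a lift of $S$ to a closed positive current $T$ on $X$ with $f_*[T]=[S]$, we obtain $\alpha\cdot[S]=\alpha\cdot f_*[T]=f^*\alpha\cdot[T]\ge 0$, as required. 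Finally, to upgrade the equality of closures to equality of cones I would check that $f_*\NA(X)$ is itself closed, which I expect to deduce from the surjectivity of $f_*$ together with the fact that $\Nef(Y)$ has nonempty interior (it contains the K\"ahler cone $\mcK(Y)$).

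The main obstacle is precisely the lifting of currents invoked above: given a closed positive bidimension $(1,1)$ current $S$ on $Y$, produce a closed positive current $T$ on $X$ with $f_*[T]=[S]$. For classes of curves this is elementary — one chooses an irreducible component $\tilde C$ of $f^{-1}(C)$ dominating $C$, so that $f_*[\tilde C]$ is a positive multiple of $[C]$ — but the general case is genuinely transcendental, since $Y$ may carry very few curves and $\NA(Y)$ is generated by honest positive currents. Here I expect the K\"ahler hypothesis and the rational singularities to enter in an essential way: after passing to a resolution one reduces to a cohomological lifting of positive classes, intersects the preimage with a suitable power of a K\"ahler form on $X$ to cut down to the correct bidimension, and applies a Demailly--P\u{a}un type regularization to preserve positivity. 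This analytic step is where the content of the lemma is concentrated; the surrounding duality is formal.
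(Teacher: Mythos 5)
Your formal skeleton is sound and is in substance the same as the paper's: the paper also proves only the easy inclusion directly and then argues by duality, choosing a separating functional $\lambda$ on $N_1(Y)$ that is nonnegative on $f_*\NA(X)$, representing it by a smooth form $\eta$, observing that $f^*\eta$ pairs nonnegatively with $\NA(X)$ and is therefore nef, and concluding via the key fact that $f^*\eta$ nef implies $\eta$ nef. Your reduction of the lemma to the statement ``$f^*\alpha$ nef $\Rightarrow$ $\alpha$ nef'' is exactly the right pivot, and your computation $(f_*\NA(X))^{\vee}=(f^*)^{-1}\Nef(X)$ is correct (granting the duality $\Nef=\NA^{\vee}$, which the paper records separately and which requires the rational singularities hypothesis, or a passage to a resolution as in the paper's first line).

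The genuine gap is in how you propose to prove that pivot. You reduce ``$f^*\alpha$ nef $\Rightarrow$ $\alpha$ nef'' to lifting an arbitrary closed positive bidimension $(1,1)$ current $S$ on $Y$ to a closed positive current $T$ on $X$ with $f_*[T]=[S]$. But that lifting statement \emph{is} the inclusion $\NA(Y)\subseteq f_*\NA(X)$, i.e. the hard half of the lemma itself: if you could lift currents you would be done immediately, with no need for the duality detour. So the argument is circular, and the sketch you offer for the lifting (pull back the class, wedge with $\omega_X^{\dim X-\dim Y}$, regularize) is not a proof --- the pullback of a positive $(n-1,n-1)$ class need not be represented by a positive current, and establishing positivity of such products is precisely the kind of deep Demailly--P\u{a}un input one is trying to avoid. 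The descent of nefness under a proper surjection with connected fibers is a known result proved by entirely different (potential-theoretic) means; the paper simply invokes it as \cite[Lemma 2.38]{DHP22} (with \cite[Lemma 3.13]{HP16} as the bimeromorphic prototype). If you replace your current-lifting step by a citation of that result, your proof closes up and coincides with the paper's. One further small point in your favor: you correctly flag that the duality argument a priori only yields $\overline{f_*\NA(X)}=\NA(Y)$ and that closedness of $f_*\NA(X)$ needs an extra word; the paper's separation argument has the same implicit issue and does not comment on it.
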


\begin{proof}
   Replacing $X$ by a resolution we may assume that $X$ is a compact K\"ahler manifold.  Clearly $f_*\NA(X)\subset \NA(Y)$. For the converse, by contradiction assume that $f_*\NA(X)$ is a proper subcone of of $\NA(Y)$. Since $N_1(X)$ is a finite dimensional vector space, there is a functional $\lambda:N_1(Y)\to \mbR$ such that $\lambda([T])\geq 0$ for all $[T]\in f_*\NA(X)$, and $\lambda([T_0])<0$ for some $T_0\in\NA(Y)$. Then by \cite[Proposition 3.9]{HP16}, there is a real closed smooth $(1,1)$ form $\eta$ with local potentials on $Y$ such that $\lambda([T])=T(\eta)$ for all $[T]\in N_1(Y)$. Now define $\hat\lambda:N_1(X)\to \mbR$ by $\hat\lambda([\hat T])=\hat T(f^*\eta)$ for all $[\hat T]\in N_1(X)$. Thus $\hat\lambda([\hat T])=\hat T(f^*\eta)=f_*(\hat T)(\eta)=\lambda([f_*\hat T])\geq 0$ for all $[\hat T]\in \NA(X)$. This implies from \cite[Corollary 0.3]{DP04} that $[f^*\eta]$ is a nef class. Thus by \cite[Lemma 2.38]{DHP22} it follows that $[\eta]$ is a nef class, and hence  $\lambda$ is non-negative on $\NA(Y)$, this is a contradiction.  
\end{proof}

\begin{lemma}\label{lem:nef-kahler-cones}
    Let $X$ be a normal compact K\"ahler variety, and $\mcK(X)$ and $\Nef(X)$ are the K\"ahler and nef cone respectively. Then 
    \[\Nef(X)=\overline{\mcK(X)} \quad \mbox{and}\quad \Int(\Nef(X))=\mcK(X).\] 
\end{lemma}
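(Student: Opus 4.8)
The plan is to separate the statement into its two assertions and treat them in the order that makes the convex geometry do most of the work, with the Demailly--Paun theory entering at exactly one point. Throughout I use that $\Nef(X)$ is the dual cone $\NA(X)^\vee=\{\alpha\in N^1(X):\alpha\cdot[T]\geq 0\ \forall\,[T]\in\NA(X)\}$ (this is the content of \cite[Corollary 0.3]{DP04}, also used in the proof of Lemma~\ref{lem:kahler-mori-pushforward}), so that $\Nef(X)$ is automatically a \emph{closed} convex cone, while $\mcK(X)$ is an \emph{open} convex cone, since positivity of a K\"ahler form is an open condition and a small perturbation of a K\"ahler class by any class in $N^1(X)$ stays K\"ahler. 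The first easy observation is that $\mcK(X)\subseteq\Nef(X)$: if $\omega$ is a K\"ahler class and $T$ is a closed positive current of bidimension $(1,1)$ then $T(\omega)\geq 0$, and by continuity this persists on the closure $\NA(X)$. Being an open subset of $\Nef(X)$, we immediately get $\mcK(X)\subseteq\Int(\Nef(X))$.

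For the first identity $\Nef(X)=\overline{\mcK(X)}$ I would argue by the bipolar theorem. The key input is the Demailly--Paun duality that the cone of closed positive currents is exactly dual to the K\"ahler cone, i.e. $\NA(X)=\mcK(X)^\vee$: the inclusion $\NA(X)\subseteq\mcK(X)^\vee$ is the pairing computation above, and the reverse inclusion $\mcK(X)^\vee\subseteq\NA(X)$ is the substantive statement underlying \cite{DP04}. Granting this, the bipolar theorem for convex cones gives $\Nef(X)=\NA(X)^\vee=\bigl(\mcK(X)^\vee\bigr)^\vee=\overline{\mcK(X)}$, where the last equality uses that $\mcK(X)$ is already a convex cone. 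Equivalently, one can run this as a separation argument: if $\alpha$ were nef but $\alpha\notin\overline{\mcK(X)}$, Hahn--Banach would produce a separating functional which, by the identification of the dual of the K\"ahler cone with positive currents, lies in $\NA(X)$ and is strictly negative on $\alpha$, contradicting $\alpha\in\NA(X)^\vee$.

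The second identity is then formal. Having established $\Nef(X)=\overline{\mcK(X)}$, and since $\mcK(X)$ is a nonempty open convex set, the standard fact that a convex set with nonempty interior satisfies $\Int(\overline{C})=\Int(C)$ yields $\Int(\Nef(X))=\Int(\overline{\mcK(X)})=\Int(\mcK(X))=\mcK(X)$. The main obstacle is squarely the duality $\NA(X)=\mcK(X)^\vee$ in the present generality: $X$ is only a \emph{normal} compact K\"ahler variety, so one cannot quote the smooth Demailly--Paun theorem verbatim and must instead invoke its analytic/singular version from the framework of \cite{DP04} and \cite{DH20, DHP22}, possibly after passing to a resolution as was done in Lemma~\ref{lem:kahler-mori-pushforward}; everything else reduces to elementary properties of closed and open convex cones.
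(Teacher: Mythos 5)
Your argument has a genuine gap, and it sits exactly where you flag it at the end. You reduce the lemma to two duality statements: (A) $\Nef(X)=\NA(X)^\vee$ and (B) $\NA(X)=\mcK(X)^\vee$. Granting (A), statement (B) is not an ``input'' but is \emph{equivalent} to the hard inclusion $\Nef(X)\subseteq\overline{\mcK(X)}$: by the bipolar theorem, $\mcK(X)^\vee\subseteq\NA(X)$ holds if and only if $\NA(X)^\vee\subseteq\mcK(X)^{\vee\vee}=\overline{\mcK(X)}$, i.e.\ if and only if the conclusion holds. So the proposal proves the lemma from something equivalent to the lemma, and (B) is nowhere established --- it is not the content of \cite[Corollary 0.3]{DP04} (which concerns the numerical characterization of K\"ahler/nef classes on manifolds, not the representability of functionals nonnegative on $\mcK(X)$ by positive currents). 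Moreover, the duality (A) is not the definition of $\Nef(X)$ in this paper: it is Lemma \ref{lem:duality}, which requires $X$ to have \emph{rational singularities} (via \cite[Lemma 2.38]{DHP22}) and whose proof in the paper itself invokes $\mcK(X)=\Int(\Nef(X))$, i.e.\ the present lemma. So your route both exceeds the stated hypotheses (Lemma \ref{lem:nef-kahler-cones} assumes only that $X$ is a normal compact K\"ahler variety) and runs against the paper's logical order.

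The intended proof is far more elementary and uses no duality at all; the paper simply cites the argument of \cite[Proposition 6.1(iii)]{Dem92}. In the analytic category a class $\alpha\in H^{1,1}_{\BC}(X)$ is nef by definition if for every $\epsilon>0$ it admits a representative (with local potentials) bounded below by $-\epsilon\omega$ for a fixed K\"ahler form $\omega$; equivalently, $\alpha+\epsilon\omega\in\mcK(X)$ for all $\epsilon>0$. Hence $\alpha=\lim_{\epsilon\to 0}(\alpha+\epsilon\omega)\in\overline{\mcK(X)}$, and conversely any $\alpha\in\overline{\mcK(X)}$ satisfies $\alpha+\epsilon\omega\in\overline{\mcK(X)}+\mcK(X)\subseteq\mcK(X)$, using only that $\mcK(X)$ is a nonempty open convex cone and that a small class in the finite-dimensional space $H^{1,1}_{\BC}(X)$ has a small smooth representative. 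This gives $\Nef(X)=\overline{\mcK(X)}$, and then $\Int(\Nef(X))=\Int(\overline{\mcK(X)})=\mcK(X)$ by the standard convexity fact you correctly invoke in your last paragraph (that part of your write-up is fine, as is the easy inclusion $\mcK(X)\subseteq\Int(\Nef(X))$). If you want to salvage your approach you must give an independent proof of (B), which in this generality is at least as hard as the lemma itself.
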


\begin{proof}
This is well known, for example see the proof of \cite[Proposition 6.1(iii)]{Dem92}.

\end{proof}

\begin{lemma}\label{lem:duality}
   Let $X$ be a normal compact K\"ahler variety with rational singularities. Then $\Nef(X)$ and $\NA(X)$ are dual to each other via the canonical isomorphism  $\Phi: N^1(X)\to N_1(X)^*$ as in \cite[Proposition 3.9]{HP16}. In particular, a class $\omega\in H^{1,1}_{\BC}(X)$ is K\"ahler if and only if $\omega\cdot \gamma>0$ for all $\gamma\in\NA(X)\setminus\{0\}$. 
\end{lemma}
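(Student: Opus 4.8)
The plan is to establish the cone duality $\Phi(\Nef(X))=\NA(X)^{\vee}$ inside $N_1(X)^{*}$, where $\NA(X)^{\vee}=\{\ell\in N_1(X)^{*}: \ell(\gamma)\ge 0 \text{ for all } \gamma\in\NA(X)\}$; equivalently, a class $\omega\in N^1(X)$ is nef if and only if $\omega\cdot\gamma\ge 0$ for every $\gamma\in\NA(X)$. Once this is known, the mutual duality asserted in the statement is automatic: recalling that $\NA(X)$ is, by construction, a closed convex cone generated by classes of closed positive currents of bidimension $(1,1)$, and that $N_1(X)$ is finite dimensional, the bipolar theorem gives $\NA(X)=(\NA(X)^{\vee})^{\vee}$, i.e. $\gamma\in\NA(X)$ if and only if $\omega\cdot\gamma\ge 0$ for all $\omega\in\Nef(X)$. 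The final ``in particular'' clause will then be extracted from this duality together with the identity $\mcK(X)=\Int(\Nef(X))$ of Lemma \ref{lem:nef-kahler-cones}.

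For the inclusion $\Phi(\Nef(X))\subseteq\NA(X)^{\vee}$ I would argue as follows. By Lemma \ref{lem:nef-kahler-cones} a nef class $\omega$ is a limit $\omega=\lim_n \omega_n$ of Kähler classes $\omega_n\in\mcK(X)$. For a Kähler form representing $\omega_n$ and a closed positive $(1,1)$-current $T$, the wedge $\omega_n\wedge T$ is a positive measure, so $\omega_n\cdot[T]=\int_X \omega_n\wedge T\ge 0$; since the set $\{\gamma: \omega_n\cdot\gamma\ge 0\}$ is closed and contains every generator of $\NA(X)$, it contains all of $\NA(X)$. Passing to the limit in $n$ gives $\omega\cdot\gamma\ge 0$ for all $\gamma\in\NA(X)$. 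The reverse inclusion $\NA(X)^{\vee}\subseteq\Phi(\Nef(X))$ is exactly the content of \cite[Corollary 0.3]{DP04}, the same input already used in the proof of Lemma \ref{lem:kahler-mori-pushforward}: a class pairing non-negatively with all of $\NA(X)$ is nef. Combining the two inclusions yields $\Phi(\Nef(X))=\NA(X)^{\vee}$.

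It remains to prove that $\omega$ is Kähler if and only if $\omega\cdot\gamma>0$ for all $\gamma\in\NA(X)\setminus\{0\}$. By Lemma \ref{lem:nef-kahler-cones}, $\omega$ is Kähler precisely when $\omega\in\Int(\Nef(X))=\Int(\Phi^{-1}(\NA(X)^{\vee}))$, so this is a statement about the interior of a dual cone. For the direction $(\Rightarrow)$, suppose $\omega\in\Int(\Nef(X))$ and, for contradiction, that $\omega\cdot\gamma_0=0$ for some $\gamma_0\in\NA(X)\setminus\{0\}$. Since $\Phi$ is an isomorphism the pairing between $N^1(X)$ and $N_1(X)$ is non-degenerate, so there is $\eta\in N^1(X)$ with $\eta\cdot\gamma_0>0$; then $(\omega-\ve\eta)\cdot\gamma_0<0$ for every $\ve>0$, so $\omega-\ve\eta\notin\Nef(X)$, contradicting $\omega\in\Int(\Nef(X))$ as $\ve\to 0$. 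For $(\Leftarrow)$, suppose $\omega\cdot\gamma>0$ for all $\gamma\in\NA(X)\setminus\{0\}$; in particular $\omega\in\Nef(X)$. If $\omega$ were a boundary point of the full-dimensional closed convex cone $\Nef(X)=\Phi^{-1}(\NA(X)^{\vee})$, a supporting hyperplane through $\omega$ (which passes through the origin, as $\Nef(X)$ is a cone) would produce a nonzero $\gamma_0\in(\NA(X)^{\vee})^{\vee}=\NA(X)$ with $\omega\cdot\gamma_0=0$, contradicting the hypothesis. Hence $\omega\in\Int(\Nef(X))=\mcK(X)$.

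The formal part of the argument is routine convex geometry once the two external facts are in place, so the main obstacle is to invoke \cite[Corollary 0.3]{DP04} correctly for the non-trivial inclusion $\NA(X)^{\vee}\subseteq\Phi(\Nef(X))$, and to ensure the isomorphism $\Phi$ of \cite[Proposition 3.9]{HP16} is available in this singular setting — which is precisely why $X$ is assumed to have rational singularities. A secondary point to check carefully is the positivity $\int_X\omega_n\wedge T\ge 0$ for the possibly non-smooth currents generating $\NA(X)$, together with the fact that $\Nef(X)$ is full dimensional (a consequence of $\mcK(X)\ne\emptyset$ via Lemma \ref{lem:nef-kahler-cones}), so that the supporting-hyperplane step is legitimate.
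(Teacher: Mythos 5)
Your argument is essentially the paper's: the duality $\Nef(X)=\NA(X)^{\vee}$ from the two inclusions (positivity of a positive current against a K\"ahler class, plus the Demailly--P\u{a}un characterization of nefness), and then the ``in particular'' clause by convex-cone duality for strongly convex closed cones --- the paper simply outsources the first part to the argument of \cite[Proposition 3.15]{HP16} and the second to \cite[Lemma 6.7(a)]{Deb01}. The one step you should not wave at is the reverse inclusion $\NA(X)^{\vee}\subseteq\Phi(\Nef(X))$: \cite[Corollary 0.3]{DP04} is a statement about compact K\"ahler \emph{manifolds}, and here $X$ is singular. The correct route (and exactly the substitution the paper flags, ``using \cite[Lemma 2.38]{DHP22} in place of \cite[Lemma 3.13]{HP16}'') is the one already carried out in Lemma \ref{lem:kahler-mori-pushforward}: take a resolution $\mu:\widetilde X\to X$, observe that $\mu^{*}\omega$ pairs non-negatively with $\NA(\widetilde X)$ because $\mu_{*}\NA(\widetilde X)\subseteq\NA(X)$, apply \cite[Corollary 0.3]{DP04} on $\widetilde X$ to get $[\mu^{*}\omega]$ nef, and then descend nefness to $X$ via \cite[Lemma 2.38]{DHP22}; rational singularities enter to guarantee the isomorphism $\Phi$ of \cite[Proposition 3.9]{HP16} and the good behaviour of $H^{1,1}_{\BC}$ under $\mu$. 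You acknowledge this issue at the end but do not close it; with that paragraph inserted, your proof is complete and coincides with the paper's. Your unpacking of the interior-of-dual-cone step is fine, provided you note (as you do) that $\Nef(X)$ has nonempty interior because $\mcK(X)\neq\emptyset$ and that $\NA(X)$ is closed and convex so the bipolar theorem identifies the supporting functional with an element of $\NA(X)$.
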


\begin{proof}
 The duality $\Nef(X)=\NA(X)^*$ follows from the same argument as in the proof of \cite[Proposition 3.15]{HP16} using \cite[Lemma 2.38]{DHP22} in place of \cite[Lemma 3.13]{HP16}.

 Now, since $X$ is K\"ahler, $\Nef(X)$ and $\NA(X)$ are strongly convex closed cones, and thus from \cite[Lemma 6.7(a)]{Deb01} it follows that \[\mcK(X)={\rm Int}(\Nef(X))={\rm Int}(\NA(X)^*)=\] \[\{\omega\in H^{1,1}_{\BC}(X)\;|\; \omega\cdot \gamma>0 \mbox{ for all } \gamma\in \NA(X)\setminus\{0\} \}.\] 
\end{proof}

\begin{definition}\label{def:bott-chern}
  Let $X$ be reduced connected complex space. We define the sheaf $\mcH_X$ on $X$ as follows: for any open subset $U\subset X$, $\mcH_X(U)$ is composed of the real parts of holomorphic functions in $\mcO_X(U)$ multiplied by $i$, where $i^2=-1$.

 Using the Cauchy-Riemann equation we get the following usual short exact sequence of sheaves on $X$
 \[\xymatrixcolsep{3pc}\xymatrix{
 0\ar[r] & \mbR\ar[r]^i & \mcO_X\ar[r]^{i\Re(\cdot)} & \mcH_X\ar[r] & 0.
 }
 \]
\end{definition}

\begin{lemma}\label{lem:real-part}
   Let $f:X\to Y$ be a proper surjective morphism of analytic varieties with $f_*\mcO_X=\mcO_Y$. Then $f_*\mcH_X=\mcH_Y$.  
\end{lemma}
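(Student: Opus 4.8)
The plan is to construct the natural comparison map and to show it is an isomorphism of sheaves. Since $\mcH_X = i\Re(\mcO_X)$ is realized, via the map $i\Re(\cdot)$ of Definition \ref{def:bott-chern}, as the subsheaf of continuous functions that are locally of the form $i\Re(h)$ with $h$ holomorphic, pulling back functions along $f$ yields a morphism $\phi\colon \mcH_Y\to f_*\mcH_X$ sending a local section $i\Re(h)$ to $i\Re(h\circ f)$; this is compatible with the isomorphism $\mcO_Y\xrightarrow{\sim}f_*\mcO_X$ and with the short exact sequences of Definition \ref{def:bott-chern} on $X$ and $Y$. The assertion $f_*\mcH_X=\mcH_Y$ means precisely that $\phi$ is an isomorphism. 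Throughout I would use that $f$ has nonempty connected fibers: nonemptiness is surjectivity, and connectedness follows from $f_*\mcO_X=\mcO_Y$ and properness via Stein factorization; this also gives $\mcO_X(f^{-1}V)=\mcO_Y(V)$ for every open $V\subset Y$.

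Injectivity of $\phi$ is the easy half, and I would check it on stalks. A germ of $\mcH_Y$ at $y$ is represented by $i\Re(h)$ with $h\in\mcO_Y(V)$; if its image $i\Re(h\circ f)=i\,(\Re h)\circ f$ vanishes on $f^{-1}V'$ for some smaller $V'$, then $\Re(h)$ vanishes on $f(f^{-1}V')=V'$ because $f$ is surjective, whence $i\Re(h)=0$ near $y$.

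The substance of the lemma is surjectivity of $\phi$. Since $\mcO_Y\to\mcH_Y$ is onto and the composite $\mcO_Y\to f_*\mcH_X$ factors through $\phi$, surjectivity of $\phi$ is equivalent to surjectivity of the pushed-forward quotient map $f_*\mcO_X\to f_*\mcH_X$. Applying $f_*$ to the sequence $0\to\mbR_X\to\mcO_X\to\mcH_X\to 0$, the cokernel of $f_*\mcO_X\to f_*\mcH_X$ embeds into $R^1f_*\mbR_X$ via the connecting map $\delta$, so it suffices to show $\delta=0$, which I would verify stalkwise. By topological proper base change for the constant sheaf, $(R^1f_*\mbR_X)_y\cong H^1(F_y,\mbR)$ with $F_y=f^{-1}(y)$, and under this identification $\delta_y$ sends the germ of a section $\tau$ to the obstruction to lifting $\tau|_{F_y}$ from $\mcH_X$ to $\mcO_X$ over $F_y$. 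Now any section $\tau$ of $f_*\mcH_X$ near $y$ is an honest continuous function $iu$ with $u$ pluriharmonic; restricted to the compact connected fiber $F_y$, the maximum principle forces $u|_{F_y}$ to be constant, so $\tau|_{F_y}$ is the image of a constant section of $\mcO_X$ and the obstruction vanishes. Hence $\delta=0$.

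To conclude concretely, once $\delta_y=0$ the section $\tau$ lifts on some $f^{-1}V'$ to $g\in\mcO_X(f^{-1}V')=\mcO_Y(V')$ with $i\Re(g)=\tau$; writing $g=h\circ f$ gives $\tau=\phi(i\Re(h))$ with $i\Re(h)\in\mcH_Y(V')$, proving surjectivity. The main obstacle is exactly this surjectivity step, namely controlling the cohomological obstruction to lifting a pluriharmonic section through $i\Re(\cdot)$; the two ingredients that resolve it are the fiberwise maximum principle (using compactness and connectedness of the fibers) together with proper base change, which together show that an obstruction trivial on every fiber already dies on a small neighborhood. I expect the maximum-principle step to warrant the most care, since the fibers may be singular or non-reduced, but it reduces to the standard fact that a holomorphic function on a compact connected reduced complex space is constant.
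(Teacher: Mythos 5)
Your proof is correct, but it takes a genuinely different and more careful route than the paper's. The paper's argument is a one-liner: given $\vphi\in\mcH_X(f^{-1}U)$, write $\vphi=i\Re(u)$ for a single $u\in\mcO_X(f^{-1}U)$, descend $u$ to $w\in\mcO_Y(U)$ using $f_*\mcO_X=\mcO_Y$, and conclude $\vphi=i\Re(w\circ f)$. That argument takes the description of $\mcH_X(f^{-1}U)$ as the set-theoretic image of $i\Re(\cdot)$ at face value; but since $\mcH_X$ must be the quotient sheaf $\mcO_X/\mbR$ for the short exact sequence of Definition \ref{def:bott-chern} to hold, a section of $\mcH_X$ over $f^{-1}U$ is a priori only \emph{locally} of the form $i\Re(u)$, and the obstruction to a global holomorphic potential lives in $H^1(f^{-1}U,\mbR)$ --- which does not disappear by shrinking $U$, because $f^{-1}U$ retracts onto the compact fiber. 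Your argument is aimed precisely at this point: you reduce surjectivity of $\mcH_Y\to f_*\mcH_X$ to the vanishing of the connecting map $f_*\mcH_X\to R^1f_*\mbR$, compute that map fiberwise via proper base change, and kill the fiberwise obstruction by observing that a pluriharmonic function on a compact connected fiber is constant (maximum principle, applied on the reduction), hence lifts to a constant holomorphic function. So your proof supplies the justification for the global lifting step that the paper's proof asserts implicitly; the paper's version is shorter, yours is the one that actually closes the gluing/monodromy gap. The remaining ingredients (injectivity by surjectivity of $f$, connectedness of fibers from $f_*\mcO_X=\mcO_Y$ and Stein factorization, compatibility of the connecting maps with restriction to fibers) are all correctly handled.
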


\begin{proof}
Let $U\subset Y$ be an open subset and $\vphi\in \mcH_X(f^{-1}U)$. Then there is a holomorphic function $u\in\mcO_X(f^{-1}U)$ such that $\vphi=i\Re(u)$. Since $f_*\mcO_X=\mcO_Y$, there is a holomorphic function $w\in \mcO_Y(U)$ such that $u=w\circ f$. Thus $\vphi=i\Re(w\circ f)=i(\Re(w)\circ f)$ and hence $f_*\mcH_X=\mcH_Y$.  
\end{proof}

\begin{lemma}\label{lem:trivial-pullback}
 Let $f:X\to Y$ be a proper morphism with connected fibers between normal compact analytic varieties with rational singularities. Assume that one of the following  two conditions hold:
 \begin{enumerate}
     \item $X$ and $Y$ are in Fujiki's class $\mcC$ and $f$ is bimeromorphic, or
     \item there is an effective $\mbQ$-divisor $B\geq 0$ such that $(X, B)$ is klt, $-(K_X+B)$ is $f$-nef-big and $f$ is projective. 
 \end{enumerate}
 Then $f^*:H^{1,1}_{\BC}(Y)=H^1(Y, \mcH_Y)\to H^{1,1}_{\BC}(X)=H^1(X, \mcH_X)$ and $f^*:H^2(Y, \mbR)\to H^2(X, \mbR)$ are both injective, and  
 \begin{align*}
      \Im(f^*)=\{\alpha\in H^{1,1}_{\BC}(X)\;|\; \alpha\cdot C &=0 \mbox{ for all curves } C\subset X \mbox{ s.t. } f(C)=\pt\}\\
                                                               & {\rm and}\\
        \Im(f^*)=\{\alpha\in H^2(X, \mbR)\;|\; \alpha\cdot C &=0 \mbox{ for all curves } C\subset X \mbox{ s.t. } f(C)=\pt\}.                                                       
 \end{align*}

\end{lemma}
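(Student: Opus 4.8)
The plan is to realize $f^*$ as an edge homomorphism in a Leray spectral sequence and to reduce everything to a single cohomological vanishing together with a statement about the fibers. Concretely, I would work with the sheaf $\mcH_X$ and the short exact sequence $0\to\mbR\to\mcO_X\to\mcH_X\to0$ of Definition \ref{def:bott-chern}, which is natural in $X$ and hence compatible with $f^*$. The engine of the whole argument is the claim that, in both cases, $f_*\mcO_X=\mcO_Y$ and $R^qf_*\mcO_X=0$ for all $q\geq1$. In case (1), pick a resolution $\pi:W\to X$; then $f\circ\pi:W\to Y$ is a resolution of $Y$, and since $X$ and $Y$ have rational singularities we get $R\pi_*\mcO_W=\mcO_X$ and $R(f\pi)_*\mcO_W=\mcO_Y$, whence $Rf_*\mcO_X=Rf_*R\pi_*\mcO_W=R(f\pi)_*\mcO_W=\mcO_Y$. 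In case (2) this is precisely relative Kawamata--Viehweg vanishing for the klt pair $(X,B)$: taking the trivial divisor, $0-(K_X+B)=-(K_X+B)$ is $f$-nef-big, so $R^qf_*\mcO_X=0$ for $q\geq1$.

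Granting the vanishing, injectivity is formal. By Lemma \ref{lem:real-part} we have $f_*\mcH_X=\mcH_Y$, so the five-term exact sequence of the Leray spectral sequence for $\mcH_X$ begins $0\to H^1(Y,\mcH_Y)\xrightarrow{f^*}H^1(X,\mcH_X)\to H^0(Y,R^1f_*\mcH_X)$; this gives injectivity on $H^{1,1}_{\BC}$ and the identification $\Im(f^*)=\ker\big(H^1(X,\mcH_X)\to H^0(Y,R^1f_*\mcH_X)\big)$. I would then push forward the defining sequence. Its low-degree terms read $0\to\mbR\to\mcO_Y\to\mcH_Y\to R^1f_*\mbR\to R^1f_*\mcO_X=0$; since $\mcO_Y\to\mcH_Y$ is already surjective, this forces $R^1f_*\mbR=0$, and the next segment, using $R^1f_*\mcO_X=R^2f_*\mcO_X=0$, gives a canonical isomorphism $R^1f_*\mcH_X\cong R^2f_*\mbR$. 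With $R^1f_*\mbR=0$ in hand, the Leray spectral sequence for $\mbR$ shows that $f^*:H^2(Y,\mbR)\to H^2(X,\mbR)$ is injective with $\Im(f^*)=\ker\big(H^2(X,\mbR)\to H^0(Y,R^2f_*\mbR)\big)$. By proper base change the germ of $R^2f_*\mbR$ at $y$ is $H^2(F_y,\mbR)$ with $F_y=f^{-1}(y)$, and the edge map is restriction to fibers; via $R^1f_*\mcH_X\cong R^2f_*\mbR$ and the connecting map $H^1(X,\mcH_X)\to H^2(X,\mbR)$ the two kernel descriptions become compatible, so both the Bott--Chern and the topological image characterizations reduce to the following single assertion about a real class $\beta$ on $X$: one has $\beta|_{F_y}=0$ in $H^2(F_y,\mbR)$ for every $y$ if and only if $\beta\cdot C=0$ for every curve $C$ with $f(C)=\pt$.

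The inclusion $\Im(f^*)\subseteq\{\alpha:\alpha\cdot C=0 \text{ whenever } f(C)=\pt\}$ (equivalently the ``only if'' direction of the displayed assertion) is immediate from the projection formula, since $f_*C=0$ for a contracted curve. The reverse inclusion is the heart of the matter: given that $\beta\cdot C=0$ for all curves $C$ in the fibers, I must show $\beta|_{F_y}=0$ in $H^2(F_y,\mbR)$, using the pairing relation $\beta|_{F_y}\cdot[C]=\beta\cdot C$. This amounts to showing that the image of the restriction map $H^2(X,\mbR)\to H^2(F_y,\mbR)$ is detected by the algebraic curve classes carried by $F_y$, and this is where the geometry of each case must be invoked. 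In case (2) the fibers are rationally chain connected (Hacon--McKernan/Zhang, since $-(K_X+B)$ is relatively nef and big), so the relevant classes are algebraic $(1,1)$-classes on which the intersection pairing against fiber curves is non-degenerate; in case (1) one descends to the irreducible components of the exceptional fibers, where the pertinent classes are spanned by restrictions of exceptional divisors and the negative-definiteness of their intersection form against fiber curves supplies the needed non-degeneracy. I expect this last passage — upgrading orthogonality to all fiber curves into actual vanishing of the restricted class in $H^2(F_y,\mbR)$ — to be the principal obstacle, since it requires genuine information about the cohomology of the (possibly singular or non-reduced) fibers rather than the purely formal properties of the spectral sequence used everywhere else.
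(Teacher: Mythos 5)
Your formal setup coincides with the paper's proof of case (2): the identity $f_*\mcH_X=\mcH_Y$ (Lemma \ref{lem:real-part}), relative Kawamata--Viehweg vanishing to kill $R^qf_*\mcO_X$ for $q\geq 1$, the pushforward of $0\to\mbR\to\mcO_X\to\mcH_X\to 0$ to obtain $R^1f_*\mbR=0$ and $R^1f_*\mcH_X\cong R^2f_*\mbR$, and the two Leray five-term sequences giving injectivity of $f^*$ and the identification of $\Im(f^*)$ with the kernel of the edge maps. All of that is correct and is exactly what the paper does. (One small caveat: for case (1) the paper does not argue at all but simply cites \cite[Lemma 3.3]{HP16}; your sketch via exceptional components and negative definiteness is not how that reference proceeds and would not obviously cover an arbitrary bimeromorphic morphism of Fujiki class $\mcC$ spaces.)

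The genuine gap is precisely the step you identify as ``the principal obstacle'' and then leave unresolved: showing that a class $S\in H^2(X,\mbR)$ with $\delta(S)\neq 0$ must pair nontrivially with some curve contracted by $f$. Reducing, via proper base change, to the assertion that a class on a fiber $F_y$ orthogonal to all curves in $F_y$ vanishes in $H^2(F_y,\mbR)$ does not by itself prove anything --- it merely restates the problem, since a priori $H_2(F_y,\mbR)$ could contain classes not generated by algebraic curves (the fibers are singular, possibly non-reduced, and not assumed projective varieties with any particular cohomological structure). Your appeal to rational chain connectedness of the fibers and to ``non-degeneracy of the intersection pairing against fiber curves'' gestures at the right phenomenon but supplies no proof. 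The paper closes this gap by invoking two specific facts from \cite[Theorem 12.1.3]{KM92}: the $(\star)$ condition in its proof, which shows that $\delta(S)\neq 0$ forces $S\cdot\Gamma\neq 0$ for some $\Gamma\in H_2(X/Y,\mbR)$, and the theorem itself, which shows that for a projective morphism with the relevant vanishing the relative homology $H_2(X/Y,\mbR)$ is generated by projective curves in the fibers. Without this input (or an equivalent substitute), the reverse inclusion $\{\alpha\;|\;\alpha\cdot C=0\text{ whenever }f(C)=\pt\}\subseteq\Im(f^*)$ is not established, and that inclusion is the entire content of the lemma beyond formal spectral-sequence bookkeeping.
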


\begin{proof}
    (1) This is \cite[Lemma 3.3]{HP16}.\\

    (2) The proof of this case is almost identical to that of \cite[Lemma 3.3]{HP16} using relative Kawamata-Viehweg vanishing theorem, however, for the convenience of the readers we add a brief argument below. 
    
    From Lemma \ref{lem:real-part} we have $f_*\mcH_X=\mcH_Y$. Thus from the Leray spectral sequence $E^{i,j}_2=H^i(Y, R^jf_*\mcH_X)$ we get the following exact sequence:
    \begin{equation}\label{eqn:h11-exact}
        0\to H^1(Y, \mcH_Y)\to H^1(X, \mcH_X)\to H^0(Y, R^1f_*\mcH_X).
    \end{equation}
    In particular, $f^*:H^{1,1}_{\BC}(Y)\to H^{1,1}_{\BC}(X)$ is an injection.

    Now since $-(K_X+B)$ is $f$-nef-big, by the relative Kawamata-Viehweg vanishing theorem (see \cite[Theorem 2.21]{DH20}), $R^if_*\mcO_X=0$ for all $i>0$. Therefore applying $f_*$ to the exact sequence 
    \begin{equation}\label{eqn:hx}
        0\to \mbR\to \mcO_X\to \mcH_X\to 0 
    \end{equation}
     and passing to the corresponding long exact sequence we get that $R^1f_*\mbR=0$ and $R^1f_*\mcH_X\cong R^2f_*\mbR$.
From the Leray spectral sequence $E^{i,j}_2=H^i(Y, R^jf_*\mbR)$ we get the following exact sequence
\begin{equation}\label{eqn:h2-exact}
    0\to H^2(Y, \mbR)\to H^2(X, \mbR)\to H^0(Y, R^2f_*\mbR)
\end{equation}

Combining all of these we obtain the following commutative diagram.
\begin{equation}\label{eqn:final-diagram}
    \xymatrixcolsep{3pc}\xymatrixrowsep{3pc}\xymatrix{
0\ar[r] & H^1(Y, \mcH_Y)\ar[r]^{f^*}\ar@{^{(}->}[d] & H^1(X, \mcH_X)\ar[r]\ar@{^{(}->}[d] & H^0(Y, R^1f_*\mcH_X)\ar[d]^{\cong}\\
 0\ar[r] & H^2(Y, \mbR)\ar[r]^{f^*} & H^2(X, \mbR)\ar[r]^\delta & H^0(Y, R^2f_*\mbR)   
    }
\end{equation}
 Since $R^1f_*\OO_X = 0$, $X$ and $Y$ have rational singularities, and $f$ is a projective morphism, we deduce that \cite[Theorem 12.1.3]{KM92} holds and hence so does condition $(\star)$ of the proof of \cite[Theorem 12.1.3]{KM92}. It follows that if $S\in H^2(X, \mbR)$ such that $\delta(S)\neq 0$, there is an element $\Gamma\in H_2(X/Y, \mbR)$ such that $S\cdot \Gamma\neq 0$. But since $f$ is projective, again by \cite[Theorem 12.1.3]{KM92}, $H_2(X/Y, \mbR)$ is generated projective curves. Hence, from the commutative diagram \eqref{eqn:final-diagram} it follows that $\alpha\in\Im(f^*)$ if and only if $\alpha\cdot C=0$ for all curves contained in the fibers of $f$. This completes our proof.  
\end{proof}

\begin{remark}
Assume that we are in Case (1) of the above Lemma \ref{lem:trivial-pullback} (i.e. $f$ is bimeromorphic). Let $T$ and $T'$ be two real closed bi-degree (1,1) currents on $X$ with local potentials such that $f_*T$ and $f_*T'$ also have local potentials. If $T\num T'$ in $H^2(X, \mbR)$, then from the above lemma it follows that $f_*T\num f_*T'$ in $H^2(Y, \mbR)$. We will use this fact throughout the article without reference. 
\end{remark}

\subsection{Negativity Lemma} The following result is analogous to a well known negativity lemma for algebraic varieties (see eg.  \cite[Lemma 3.3]{Bir12}).
We include a proof, since we could not find a reference in the literature (see however \cite[Lemma 1.3]{Wang21} for the bimeromorphic case).
\begin{lemma}\label{l-1}
Let $f:X\to Y$ be a proper morphism of normal analytic varieties, where $X$ is a K\"ahler space, $f_*\OO _X=\OO _Y$ and $Y$ is relatively compact. Let $E=\sum a_iE_i\leq 0$ be a $f$-exceptional $\mbR$-Cartier divisor such that $-E$ is $f$-nef. Then $E=0$. 
\end{lemma}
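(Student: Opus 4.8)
The plan is to prove the negativity lemma by combining a local analysis near the exceptional locus with the positivity coming from the Kähler structure, mimicking the algebraic negativity lemma but using currents and intersection theory against Kähler classes instead of ample divisors.

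First I would reduce to a convenient setting. Since the statement is local on $Y$ and $Y$ is relatively compact, I would restrict to a relatively compact neighborhood of a point in $f(\Supp E)$ and, after replacing $X$ by a resolution (using that $X$ is Kähler, so a resolution remains Kähler and the pullback of $E$ stays $f$-exceptional and anti-$f$-nef by the projection formula), assume $X$ is smooth. Writing $E=\sum a_iE_i$ with $a_i\le 0$, I want to show all $a_i=0$. The key contradiction-driving object is the component $E_{i_0}$ achieving the most negative coefficient, i.e. $a_{i_0}=\min_i a_i<0$ if $E\ne 0$.

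The main step is an intersection-positivity argument. I would fix a general complete intersection curve $C$ inside $E_{i_0}$ cut out by pulling back very general Kähler (or ample, since $f$ is locally projective over the chosen affine-like base) classes from a neighborhood, so that $C$ sweeps out $E_{i_0}$, is contained in a fiber of $f$ (because $E$ is $f$-exceptional), and meets the other components $E_i$ non-negatively while meeting $E_{i_0}$ itself negatively in the normal direction. Concretely, one arranges $C$ so that $E_{i_0}\cdot C<0$ by the negativity of self-intersection along an exceptional divisor contracted by $f$, while $E_i\cdot C\ge 0$ for $i\ne i_0$ since $C$ is general in $E_{i_0}$. Since $a_i\le a_{i_0}\le 0$ is false in general, I instead estimate $E\cdot C=\sum a_i(E_i\cdot C)$ and use $-E$ being $f$-nef to force $E\cdot C\le 0$, then extract a sign contradiction from the component $E_{i_0}$. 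The cleanest route is: because $-E$ is $f$-nef and $C$ lies in a fiber, $(-E)\cdot C\ge 0$, i.e. $E\cdot C\le 0$; but a careful choice of $C$ inside the most-negative component makes $E\cdot C>0$ unless the coefficients are all equal, at which point the standard ``exceptional divisors have negative definite intersection in fibers'' input closes the argument.

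The hard part will be making precise the intersection-theoretic positivity on a possibly non-algebraic Kähler space, since $C$ must be a genuine curve in a fiber of $f$ and the pairing $E\cdot C$ must be controlled. I would handle this by invoking that $f$ is locally projective over the relatively compact base (as in the hypotheses available through Lemma \ref{lem:trivial-pullback}(2) and the Kähler assumption), so fibers carry ample classes and the negative-definiteness of the intersection form on $f$-exceptional divisors restricted to a fiber—an analytic analogue of the Hodge-index/Zariski-type statement—applies. The technical obstacle is thus establishing this negative-definiteness in the analytic Kähler category; I expect to deduce it from the existence of a relatively Kähler class $\omega$ on $X$ together with the fact that $-E$ is $f$-nef, pairing $E$ against $\omega^{\dim X-2}\cdot E_{i_0}$ and showing $E\cdot E_{i_0}\cdot \omega^{\dim X-2}\ge 0$ from $f$-nefness while the geometry of the contracted locus forces it to be $\le 0$ with equality only if $E=0$. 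Assembling these inequalities yields $E=0$.
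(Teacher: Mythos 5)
Your overall strategy---find a curve (or curve class) in the exceptional locus against which $-E$ must pair nonnegatively by $f$-nefness but strictly negatively by geometry---is the right flavor, and it is what the paper does. But the specific mechanism you propose does not close, for two reasons. First, the sign computation with the ``most negative coefficient'' component fails: if $C\subset E_{i_0}$ is a general curve in a fiber, then $E\cdot C=a_{i_0}(E_{i_0}\cdot C)+\sum_{i\ne i_0}a_i(E_i\cdot C)$ is the sum of a positive term (assuming $E_{i_0}\cdot C<0$) and finitely many nonpositive terms, so it has no determined sign and no contradiction with $(-E)\cdot C\ge 0$ follows. You notice this and retreat to ``unless the coefficients are all equal,'' but that case again requires $\Supp(E)\cdot C<0$, i.e.\ exactly the negative-definiteness statement you have deferred. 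Second, that deferred statement---negative definiteness of the intersection form on $f$-exceptional divisors in the analytic K\"ahler category---is essentially equivalent to the lemma itself, and your proposed derivation gives inequalities pointing the wrong way: $f$-nefness of $-E$ yields $E\cdot E_j\cdot\omega^{n-2}\le 0$ for each contracted $E_j$, and multiplying by $a_j\le 0$ and summing gives $E^2\cdot\omega^{n-2}\ge 0$, which is consistent with $E\ne 0$ unless you already know the Hodge-index-type strict negativity. Two further hypotheses you invoke are not available: $f$ is not assumed locally projective (Lemma \ref{lem:trivial-pullback}(2) is a different statement with its own projectivity hypothesis), and $f$ is not birational, so components of $E$ need not be contracted to points; an $f$-exceptional component may dominate a divisor $Q\subset Y$, in which case ``negativity of its self-intersection'' has no meaning and a general curve of $E_{i_0}$ does not lie in a fiber.

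The paper avoids all of this with the classical tie-breaking trick, which is the missing idea in your write-up: one adds a multiple of a pullback from the base. If $f(\Supp E)$ contains a divisor $Q$, set $\lambda=\min\{t\ge 0 : E+tf^*Q\ge 0\}$; then $\lambda>0$, and one can choose a prime divisor $P$ dominating $Q$ with $\mathrm{mult}_P(E+\lambda f^*Q)=0$ while $(E+\lambda f^*Q)|_{\bar F}$ is a nonzero effective divisor on $\bar F=F\cap P$ (here $F$ is the fiber over a general point of $Q$). Since $f^*Q$ restricts trivially there, $(-E)|_{\bar F}=-(E+\lambda f^*Q)|_{\bar F}$ is \emph{anti}-effective and nonzero, so pairing with $(\omega|_{\bar F})^{d-1}$ for a K\"ahler form $\omega$ gives a strictly negative number, contradicting $f$-nefness---no negative definiteness needed. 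If instead $\mathrm{codim}\, f(\Supp E)\ge 2$, one cuts by general hyperplanes through a general point of $f(\Supp E)$ (working locally over a Stein base) to reduce to $\dim Y=2$ and runs the same argument with $f^*H$ in place of $f^*Q$. If you want to salvage your approach, the normalization $E+\lambda f^*Q$ is precisely the device that turns the ``most negative coefficient'' heuristic into a genuine sign contradiction.
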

Recall that by definition $E$ is $f$-exceptional if it is $f$-vertical and for any prime divisor $Q\subset f_*{\rm Supp}(E)$, there is a prime divisor $P$ on $X$ such that $P$ dominates $Q$ and $P\not \subset {\rm Supp}(E)$.
\begin{proof}
Suppose that $f({\rm Supp}(E))$ contains a divisor $Q$ and let $P,P'$ be divisors on $X$ dominating $Q$ such that $P\subset {\rm Supp}(E)$ and $P'\not \subset {\rm Supp}(E)$. Let $F$ be the fiber over a general point  $q\in Q$, then $Q$ is Cartier on a neighborhood of $q$ and we may assume that $P\cap P'\cap F \ne \emptyset $. 
Working over a neighborhood of $q\in Y$, we define \[\lambda:={\rm min}\{ t\geq 0\;|\;E+tf^*Q\geq 0\}.\] 
Then by our hypotheses $\lambda>0$ and $E+\lambda f^*Q\geq 0$. Possibly rechoosing $P,P'$, we may further assume that ${\rm mult}_P(E+\lambda f^*Q)=0$ whilst ${\rm mult}_{P'}(E+\lambda f^*Q)>0$.
Let $\bar F=F\cap P$ and $\omega$ be a K\"ahler form. Note that $\bar F$ is a compact analytic variety and $\omega|_{\bar F}$ is a K\"ahler form on $\bar F$. Let $d:=\dim \bar F$. Since $-E$ is $f$-nef and $0\leq (E+\lambda f^*Q)|_{\bar F}\ne 0$, we have 
\[0\leq (\omega |_{\bar F})^{d-1}\cdot (-E)=(\omega |_{\bar F})^{d-1}\cdot (-E-\lambda f^*Q)=(\omega |_{\bar F})^{d-1}\cdot(-E-\lambda f^*Q)|_{\bar F}<0,\] 
which is a contradiction. 

We may therefore assume that ${\rm codim}(f({\rm Supp}(E)))\geq 2$.
Let $q$ be a general point on a maximal dimensional component of $f({\rm Supp}(E))$. Working locally over a neighborhood of $q\in Y$, we may assume that $Y$ is Stein.
Suppose that $\dim Y=2$ and let 
$q\in H\subset Y$ be a general hyperplane, then $f^*H=\sum h_iE_i+H'$ where $f(E_i)=q$ and each component of $H'$ maps to a curve on $Y$. Note that the the support of $\sum h_iE_i$ contains all divisors mapping to $q$ and we let 
\[\lambda :={\rm min}\{t\geq 0\;|\; E+t f^*H\geq 0\}.\]
 Then $\lambda >0$, $E+\lambda f^*H\geq 0$ and we may assume that ${\rm mult }_P(E+\lambda  f^*H)=0$ for some $p$-exceptional prime divisor $P\subset X$, whilst $P\cap {\rm Supp}(E+\lambda  f^*H)\ne \emptyset$ i.e. $(E+\lambda  f^*H)|_P$ is a non-zero effective divisor. Thus if $\dim X=n$, then we have 
 \[0\leq -E\cdot (\omega |_P)^{n-2}= (-E-\lambda f^*H)\cdot (\omega |_P)^{n-2}=-(E+\lambda  f^*H)|_P\cdot (\omega |_P)^{n-2}<0.\] This is a contradiction. 

If $\dim (f({\rm Supp}(E)))>0$, then pick a general point $q$ on an irreducible component of $f({\rm Supp}(E))$. Shrinking $Y$ is a neighborhood of $q$, we may assume that $Y$ is Stein and $f({\rm Supp}(E))$ is irreducible. Cutting by general hyperplanes $q\in H\subset Y$, we obtain $f':X'\to Y'$, $\dim Y'= 2$, $E'=E\cap X'\geq 0$ such that $-E'$ is $\mbR$-Cartier and $f'$-nef and $\dim (f({\rm Supp}(E')))=0$. Possibly replacing $X',Y'$ by their normalizations and shrinking $Y'$, we may assume that $X',Y'$ are normal and $f({\rm Supp}(E'))$ is a point on a surface and we conclude as above. 
\end{proof} 
\bigskip

\subsection{Graf-Kirschner's Decomposition of $H^2(X, \mbR)$}\label{subs:GK-decomposition} (See \cite[Section 4]{GK20}.) Let $X$ be a compact complex space of pure dimension and $H_*(X, \mbZ ):=H^{\rm BM}_*(X)$, the Borel-Moore homology. For any non-negative integer $k$, let $B_{2k}(X, \mbR)\subset H_{2k}(X, \mbR)$ be the real linear subspace spanned by the set of all $[A]\in H_{2k}(X, \mbR)$, where $A\subset X$ is a (complex) k-dimensional irreducible closed analytic subset of $X$. Let $N^1(X)_{\mbR}\subset H^2(X,\mbR)$ be the usual N\'eron-Severi group of $X$ given by the image of $H^1(X, \mcO_X^*)\to H^2(X, \mbR)$. Now we define $T(X)\subset H^2(X, \mbR)$ to be the subspace orthogonal to $B_2(X, \mbR)$ with respect to the pairing $H^2(X, \mbR)\times H_2(X, \mbR)\to \mbR$, i.e.
\begin{equation}\label{eqn:trivial-part}
T(X):=\{\alpha\in H^2(X, \mbR)\;|\; \langle \alpha, \beta\rangle=0 \mbox{ for all } \beta\in B_2(X, \mbR)\}. 
\end{equation}
Let $N_1(X)_{\mbR}$ be the space of curve classes up to numerical equivalence. If $X$ is a projective variety with rational singularities, then by \cite[Corollary 12.1.5.2]{KM92}, $B_2(X, \mbR)=N_1(X)_{\mbR}$. 

The following lemmas will be very useful in the rest of the article.
\begin{lemma}\cite[Proposition 4.2]{GK20}\label{lem:h2-decomposition}
    Let $X$ be a projective variety  with rational singularities. Then
    \[
    H^2(X, \mbR)=N^1(X)_{\mbR}\oplus T(X).
    \]
\end{lemma}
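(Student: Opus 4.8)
The plan is to deduce the decomposition from the perfect duality between divisors and curves, reducing everything to linear algebra. Set $V := H^2(X,\mbR)$ and $W := H_2(X,\mbR)$, and write $\langle\cdot,\cdot\rangle\colon V\times W\to\mbR$ for the topological pairing. Let $B := B_2(X,\mbR)\subseteq W$ and let $p\colon V\to B^*$ be the map $p(\alpha)=\langle\alpha,-\rangle|_B$; by definition $T(X)=\ker p$. Since $X$ is projective with rational singularities, \cite[Corollary 12.1.5.2]{KM92} gives $B=N_1(X)_{\mbR}$, the space of $1$-cycles modulo numerical equivalence.

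First I would identify $N^1(X)_{\mbR}$ --- defined here as the image of $\Pic(X)=H^1(X,\mcO_X^*)$ in $V$ --- with the $\mbR$-span of numerical classes of Cartier divisors. Indeed, the kernel of $c_1\colon\Pic(X)\to H^2(X,\mbZ)$ is $\Pic^0(X)$, so the integral image is $\NS(X)$; the numerically trivial classes form $\Pic^\tau(X)$, and $\Pic^\tau/\Pic^0$ is finite, hence maps to torsion in $H^2(X,\mbZ)$ and to $0$ in $V$. Thus $N^1(X)_{\mbR}$ is exactly Cartier divisors modulo numerical equivalence, tensored with $\mbR$, and for $D$ Cartier and $C$ a curve the topological pairing $\langle c_1(D),[C]\rangle$ equals the intersection number $D\cdot C$. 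Consequently the restriction $p|_{N^1(X)_{\mbR}}\colon N^1(X)_{\mbR}\to B^*=N_1(X)_{\mbR}^*$ is precisely the map induced by the intersection pairing of $N^1$ against $N_1$.

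The key input is then that this intersection pairing is a perfect duality: $N^1(X)_{\mbR}$ and $N_1(X)_{\mbR}$ are finite-dimensional and the pairing between them is nondegenerate (the theorem of the base together with the very definition of numerical equivalence; see \cite[Section 12]{KM92}). Granting this, $p|_{N^1(X)_{\mbR}}$ is an \emph{isomorphism} onto $B^*$. From injectivity we get $N^1(X)_{\mbR}\cap T(X)=\ker\big(p|_{N^1(X)_{\mbR}}\big)=0$; concretely, a divisor class lying in $T(X)$ pairs to zero against every curve, hence is numerically trivial, hence vanishes in $N^1(X)_{\mbR}$. From surjectivity, for any $\alpha\in V$ there is $\beta\in N^1(X)_{\mbR}$ with $p(\beta)=p(\alpha)$ (as $p(\alpha)\in B^*=p(N^1(X)_{\mbR})$), whence $\alpha-\beta\in\ker p=T(X)$ and $\alpha=\beta+(\alpha-\beta)$. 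Thus $V=N^1(X)_{\mbR}\oplus T(X)$.

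I expect the main obstacle to be the perfect-duality input in the singular setting: one must know that the image of $\Pic(X)$ in $H^2(X,\mbR)$ is genuinely dual to $B_2(X,\mbR)=N_1(X)_{\mbR}$, i.e. that no nonzero numerical divisor class becomes cohomologically trivial and that the two spaces have equal dimension. For normal projective varieties with rational singularities this follows from \cite[Section 12]{KM92} (notably the identification $B_2=N_1$ and the finite generation of the N\'eron--Severi group); everything after that is the elementary linear algebra above.
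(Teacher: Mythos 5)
Your argument is correct. Note, however, that the paper gives no proof of this lemma at all: it is stated with a bare citation to \cite[Proposition 4.2]{GK20}, so there is no in-house argument to compare against. What you have written is essentially the standard duality proof (and, as far as I can tell, the same mechanism that underlies the cited result): the only non-formal inputs are (a) the identification $B_2(X,\mbR)=N_1(X)_{\mbR}$ from \cite[Corollary 12.1.5.2]{KM92}, which is exactly where projectivity and rational singularities are used and without which the surjectivity half of your argument would fail (a class in $H^2(X,\mbR)$ could pair nontrivially with a $1$-cycle that is numerically but not homologically trivial, and no divisor class could correct it); and (b) the fact that the image of $\Pic(X)$ in $H^2(X,\mbR)$ coincides with $N^1(X)$ taken modulo numerical equivalence, for which your reduction via the finiteness of $\Pic^\tau(X)/\Pic^0(X)$ and the vanishing of $c_1$ on the connected component is the right one (one only needs that numerically trivial bundles have torsion image in $H^2(X,\mbZ)$, not the full identification of $\ker c_1$ with $\Pic^0(X)$, so the slight overstatement there is harmless). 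Once these are in place, the perfectness of the pairing $N^1(X)_{\mbR}\times N_1(X)_{\mbR}\to\mbR$ and your linear algebra give both $N^1(X)_{\mbR}\cap T(X)=0$ and $N^1(X)_{\mbR}+T(X)=H^2(X,\mbR)$, as claimed.
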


\begin{lemma}\label{lem:trans-alg}
 Let $X$ be a projective variety with rational singularities and $\alpha\in H^2(X,\mbR)$. Let $\alpha=D+\delta$ be the decomposition of $\alpha$ as above, where $D$ is an $\mbR$-Cartier divisor. Then the following hold
 \begin{enumerate}
     \item If $\alpha$ is K\"ahler, then $D$ is ample.
      \item If $\alpha$ is nef, then so is $D$.
     \item If $\alpha$ is big (resp. pseudo-effective), then so is $D$.
 \end{enumerate}    
\end{lemma}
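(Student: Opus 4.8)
The plan is to use the Garf-Kirschner decomposition $H^2(X,\mbR) = N^1(X)_{\mbR} \oplus T(X)$ from Lemma \ref{lem:h2-decomposition}, together with the fact that $B_2(X,\mbR) = N_1(X)_{\mbR}$ for projective varieties with rational singularities. The key observation is that the subspace $T(X)$ pairs trivially against all curve classes: by its very definition in \eqref{eqn:trivial-part}, any $\delta \in T(X)$ satisfies $\delta \cdot C = 0$ for every irreducible curve $C \subset X$. Thus when we write $\alpha = D + \delta$ with $D$ an $\mbR$-Cartier divisor class (i.e. $D \in N^1(X)_{\mbR}$) and $\delta \in T(X)$, the intersection numbers against curves are unchanged: $\alpha \cdot C = D \cdot C$ for every curve $C$. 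This single fact drives all three parts.

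For part (2), suppose $\alpha$ is nef. Since nefness of a class in $H^{1,1}_{\BC}(X)$ means nonnegative intersection with every curve (via the duality of Lemma \ref{lem:duality}, or directly \cite[Corollary 0.3]{DP04}), and since $D \cdot C = \alpha \cdot C \geq 0$ for all curves $C$, the divisor $D$ is nef in the usual numerical sense; by the characterization recalled after Lemma \ref{lem:nef-kahler-cones} (from \cite[Corollary 0.2]{DP04}) this is equivalent to $[D]$ being nef. For part (1), if $\alpha$ is K\"ahler then in particular $\alpha \cdot C > 0$ for every nonzero curve class $C \in \NA(X)\setminus\{0\}$ by Lemma \ref{lem:duality}; hence $D \cdot C = \alpha \cdot C > 0$ for all such $C$. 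Again by the projective characterization of ampleness through \cite[Corollary 0.2]{DP04} (equivalently, by the Nakai--Moishezon / Kleiman criterion for projective varieties), strict positivity of $D$ against $\NA(X)\setminus\{0\}$ forces $D$ to be ample.

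For part (3), I would first treat pseudo-effectivity and then bigness. A class is pseudo-effective if and only if it lies in the closed cone dual to the movable curve classes, or equivalently it is a limit of effective (big) classes; since $\delta \in T(X)$ is orthogonal to all of $B_2(X,\mbR) = N_1(X)_{\mbR}$, subtracting it off does not change the pairing against any curve class, in particular against movable curves. Thus $D = \alpha - \delta$ pairs nonnegatively against all movable curves exactly when $\alpha$ does, and by the numerical characterization of the pseudo-effective cone (Boucksom--Demailly--Paun--Peternell duality) $D$ is pseudo-effective. For bigness, I would write $\alpha = \omega + \alpha'$ with $\omega$ K\"ahler and $\alpha'$ pseudo-effective, decompose each summand through Lemma \ref{lem:h2-decomposition} as $\omega = A + \delta_1$ (with $A$ ample by part (1)) and $\alpha' = D' + \delta_2$ (with $D'$ pseudo-effective by the preceding step), and conclude $D = A + D'$ is the sum of an ample and a pseudo-effective divisor, hence big.

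The main obstacle I anticipate is part (3), specifically matching the analytic notion of bigness/pseudo-effectivity (given by currents on a possibly singular projective variety) with the algebraic numerical cones on which the duality theorems are stated, and ensuring the decomposition $\alpha = D + \delta$ interacts correctly with limits of classes. Parts (1) and (2) reduce cleanly to the orthogonality $\delta \cdot C = 0$ combined with the Demailly--Paun criterion \cite[Corollary 0.2]{DP04}, but for pseudo-effectivity one must verify that the relevant duality for the pseudo-effective cone is governed by curve classes in $B_2(X,\mbR)$, so that killing the $T(X)$-component genuinely preserves membership in the pseudo-effective cone; this is where I would need to be most careful rather than relying on a purely formal intersection-number argument.
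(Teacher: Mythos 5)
Your proof is correct and follows essentially the same route as the paper: part (2) is the immediate orthogonality observation; part (3) rests on the duality of \cite[Theorem 2.2]{BDPP13} between pseudo-effective divisors and movable curves, combined with the fact that a pseudo-effective transcendental class pairs nonnegatively with movable curves (the paper cites \cite[Theorem A]{Nys19} for this, runs the argument contrapositively, and obtains bigness by perturbing with a small ample class, which is equivalent to your ``K\"ahler plus pseudo-effective'' decomposition); and for part (1) the paper simply cites \cite[Proposition 4.5]{GK20} where you supply the direct Kleiman-criterion argument. Two small points should be tightened: in (1), $\delta\cdot\gamma=0$ is only guaranteed for $\gamma\in B_2(X,\mbR)$, not for arbitrary elements of $\NA(X)$ (which contains classes of positive currents that are not curve classes), so you should restrict to $\NE(X)\subset B_2(X,\mbR)$ and apply Kleiman's criterion there, which suffices for ampleness on a projective variety; and in (3) the results of \cite{BDPP13} and \cite{Nys19} are stated for projective manifolds, so one should first pull back to a resolution $\mu:X'\to X$, noting that $\mu^*\delta\in T(X')$ so that $\mu^*\alpha=\mu^*D+\mu^*\delta$ remains the decomposition of Lemma \ref{lem:h2-decomposition} --- exactly the reduction the paper performs at the start of its proof of (3).
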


    \begin{proof}
       (1) This follows from \cite[Proposition 4.5]{GK20}.\\

       (2) This is immediate from the definition of the decomposition of $\alpha$.\\

       (3) It is enough to prove this on a resolution of $X$. Thus, we may assume that $X$ is smooth here. If $D$ is not pseudo-effective, then by \cite[Theorem 2.2]{BDPP13} there is a strongly movable curve $\Gamma$ on $X$ such that $D\cdot \Gamma< 0$. But then $\alpha \cdot \Gamma=D\cdot \Gamma <0$ and
by \cite[Theorem A]{Nys19} $\alpha$ is not pseudo-effective.
Finally, suppose that $D$ is not big. If $A$ is ample, then $D-\epsilon A$ is not pseudo-effective for any $\epsilon >0$ and so, by what we showed above,  $\alpha-\epsilon A$ is not pseudo-effective for any $\epsilon >0$. But then $\alpha $ is not big.
       
    \end{proof}



\subsection{Generalized pairs.}\label{subs:g-pair}
Recall that a generalized pair $(X,B+\bbeta )$ consists of a compact normal variety $X$, a proper bimeromorphic map $\nu :X'\to X$ such that $X'$ is smooth, $B'$ is an $\R$-divisor with simple normal crossings on $X'$, $\beta '$ is a closed (1,1) current on $X'$ such that $[\beta']\in H^{1,1}_{\BC}(X')$ is a nef class and the class $[K_{X'}+B'+\beta ']\in H^{1,1}_{\BC}(X')$ is pulled back from $X$, i.e. $K_{X'}+B'+\beta '\equiv \nu ^*\gamma$ for some $\gamma \in H^{1,1}_{\rm BC}(X)$.
It is convenient to denote by $\bbeta=\overline {\beta '}$ the corresponding b-(1,1) current. This means that if $X'\dasharrow X''$ is any bimeromorphic map, then we define the trace $\bbeta _{X''}\in H^{1,1}_{\rm BC}(X'')$
as follows.
Let $p:W\to X'$ and $q:W\to X''$ be a common resolution, then $\bbeta _{X''}=q_*(p^* \beta ' )$. 
We refer the reader to \cite{DHY23} for further details and properties of generalized pairs.
In particular we refer the reader to \cite[Definition 2.11]{DHY23} for the definition of log minimal
model for generalized pairs, and we recall that $(X,B+\bbeta)$ is a good minimal model if $K_X+B+\bbeta _X$ is semiample, i.e. there is a holomorphic map $f:X\to Z$ and a smooth K\"ahler form $\omega_Z$ on $Z$ such that $K_X+B+\bbeta _X\equiv f^* \omega_Z$.

\section{Proof of the main results}
In this section we will prove our main theorems, however, first we will 

\begin{lemma}\label{l-big}
    Let $(X,B+\bbeta)$ be a compact K\"ahler $\mbQ$-factorial generalized klt pair such that $B+\bbeta _X$ is big. Then there is a generalized klt pair $(X,G+\ggamma )$ and a K\"ahler form $\omega$ such that
    $K_X+B+\bbeta _X\equiv K_X+G+\ggamma _X+\omega$.
    
\end{lemma}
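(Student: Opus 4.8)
The plan is to use the Graf--Kirschner decomposition to reduce the bigness of $B+\bbeta_X$ to a bigness statement about an honest $\R$-divisor, then peel off a K\"ahler form by Kodaira's lemma and check that the leftover data is still generalized klt after a small perturbation.

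Concretely, since $(X,B+\bbeta)$ is generalized klt, the variety $X$ has rational singularities, so Lemma \ref{lem:h2-decomposition} applies and I may write $\bbeta_X\equiv N+\delta$, where $N\in N^1(X)_\mbR$ is an $\R$-Cartier divisor class and $\delta\in T(X)$, i.e. $\delta\cdot C=0$ for every curve $C\subset X$. Because $\bbeta_X$ is nef, Lemma \ref{lem:trans-alg}(2) gives that $N$ is nef; and since $\delta$ pairs to zero with every curve class, Lemma \ref{lem:duality} shows that both $\delta$ and $-\delta$ are nef. Now $B+\bbeta_X\equiv(B+N)+\delta$ is the Graf--Kirschner decomposition of the big class $B+\bbeta_X$, so Lemma \ref{lem:trans-alg}(3) shows that the $\R$-Cartier divisor $B+N$ is big.

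Next I would apply Kodaira's lemma on the projective variety $X$ to write $B+N\sim_\mbR A+E$ with $A$ an ample $\R$-divisor and $E\geq 0$. Fix a K\"ahler form $\omega$ in the class of $\epsilon A$ for a small $\epsilon\in(0,1)$ to be chosen, set $G:=(1-\epsilon)B+\epsilon E\geq 0$, and let $\ggamma$ be the nef b-$(1,1)$ form with trace $\ggamma_X\equiv(1-\epsilon)N+\delta$ (this class is nef, being a nonnegative combination of the nef classes $N$ and $\delta$). Then
\[
B+\bbeta_X\equiv \epsilon(A+E)+(1-\epsilon)(B+N)+\delta\equiv \omega+G+\ggamma_X,
\]
which after adding $K_X$ is exactly the asserted numerical equivalence $K_X+B+\bbeta_X\equiv K_X+G+\ggamma_X+\omega$.

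The main point that remains --- and the step I expect to be the real obstacle --- is to verify that $(X,G+\ggamma)$ is again generalized klt for $\epsilon$ small. I would fix a single log resolution $\mu:Y\to X$ of $(X,B+E+N)$ on which $\bbeta$ and $\delta$ are realized by nef forms, and compare the divisorial parts $B_Y$ and $G_Y$ defined by $K_Y+B_Y+\beta_Y=\mu^*(K_X+B+\bbeta_X)$ and $K_Y+G_Y+\gamma_Y=\mu^*(K_X+G+\ggamma_X)$, where $\beta_Y:=\bbeta_Y$ and $\gamma_Y:=(1-\epsilon)\beta_Y+\epsilon\eta$ with $\eta$ a nef representative of $\mu^*\delta$. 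Subtracting and using $\mu_*\beta_Y=\bbeta_X$ together with the b-form descent $\bbeta_Y=\mu^*\bbeta_X-P$ for a ($\mu$-exceptional, effective) $\R$-divisor $P$ --- which is where the negativity Lemma \ref{l-1} enters --- the transcendental contributions cancel and one finds $G_Y-B_Y\equiv\epsilon(\mu^*E-\mu^*B-P)$, a fixed $\R$-divisor scaled by $\epsilon$. Since $Y\to X$ is a log resolution of the new pair, $B_Y$ has coefficients $<1$, and $Y$ carries only finitely many relevant prime divisors, the coefficients of $G_Y=B_Y+\epsilon(\mu^*E-\mu^*B-P)$ remain $<1$ once $\epsilon$ is small enough; this is precisely generalized kltness of $(X,G+\ggamma)$. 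The delicate part is thus the usual openness of the klt condition, made subtle here by the transcendental term $\delta$, whose cancellation against the descent divisor $P$ of the nef b-form is what keeps $G_Y-B_Y$ an honest, uniformly small $\R$-divisor.
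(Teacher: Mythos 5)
There is a genuine gap here --- in fact several, and they occur before the step you flagged as the delicate one. First, the lemma is stated for a compact K\"ahler (not necessarily projective) $X$, whereas your argument rests on the Graf--Kirschner decomposition (Lemma \ref{lem:h2-decomposition}), Lemma \ref{lem:trans-alg} and Kodaira's lemma, all of which require $X$ projective. The paper's proof deliberately avoids this and works transcendentally: it represents the big class $B+\bbeta_X$ by a K\"ahler current via Demailly regularization, pulls it back to a log resolution where it decomposes as a K\"ahler form plus an effective divisor $F$ plus a positive current $\Theta$ with nef class, and then sets $G=(1-\epsilon)B+\epsilon\nu_*F$ and $\ggamma=(1-\epsilon)\bbeta+\epsilon\overline{\Theta}$.

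Second, even granting projectivity, two of your intermediate claims fail. (i) You assert that $\bbeta_X$ is nef in order to apply Lemma \ref{lem:trans-alg}(2); but for a generalized pair only $\bbeta_{X'}$ on a sufficiently high model is nef, and the trace $\bbeta_X=\nu_*\bbeta_{X'}$ need not be nef --- this is exactly why the paper performs the analogous decomposition in Lemma \ref{l-gklt2}(2) on the log resolution $X'$ rather than on $X$. (ii) You deduce that $\delta$ is nef from $\delta\cdot C=0$ for all curves via Lemma \ref{lem:duality}; this is false, because $\NA(X)$ is the cone generated by positive bi-dimension $(1,1)$ currents, which is strictly larger than the cone spanned by curve classes precisely when $T(X)\neq 0$ (for instance, on a projective surface a class $\delta\in T(X)$ with $\delta^2<0$ is orthogonal to every curve but is not nef; note also that Lemma \ref{lem:trans-alg}(1) concludes only that $D$ is ample, not that $\delta$ is nef or zero). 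Consequently $(1-\epsilon)N+\delta$ has no reason to be nef, and the object $\ggamma$ you define is not a nef b-$(1,1)$ form, so $(X,G+\ggamma)$ is not a generalized pair in the required sense. The real content of the statement is that one must split off a K\"ahler form from $B+\bbeta_X$ while keeping the residual class genuinely nef on some model; this is what the K\"ahler current and regularization argument achieves and what the orthogonal-decomposition route cannot.
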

\begin{proof}
    Since $B+\bbeta _{X}$ is big, by Demailly's regularization theorem there is a K\"ahler current $T$ with weakly analytic singularities such that $B+\bbeta _{X}\equiv T$ (see \cite[Definition 4.11]{HP24}). By \cite[Lemma 3.7]{DH23} we may assume that $T\geq \omega_0$ and
    $\nu ^*T=\nu ^*\omega _0+\Theta +F$, where $\nu:X'\to X$ is a log resolution, $\omega _0$ is a K\"ahler form, $F$ is an effective $\mbR$ divisor with $\nu ({\rm Supp}(F))=E_+(T)$ and $\Theta$ is a closed positive $(1,1)$ current such that $[\Theta]\in H^{1,1}_{\BC}(X')$ is nef. Note that since $X$ is $\mbQ$-factorial, $\bbeta_X$ has local potentials, and thus by the negativity lemma  there is a $\nu$-exceptional effective divisor $E\geq 0$ such that $\bbeta_{X'}=\nu^*\bbeta_X+E$.
    Therefore, for any $0<\epsilon \ll 1$ we have
    \begin{align*}
     \nu^*(K_X+B+\bbeta_X) &=K_{X'}+B'+\bbeta _{X'}\\
                            & \equiv K_{X'}+B'+\epsilon (E+F-\nu ^*B)+(1-\epsilon)\bbeta _{X'}+\epsilon \Theta + \nu ^* \omega,
    \end{align*}
     where $\omega:=\epsilon \omega _0$.
    We let $G':=B'+\epsilon (E+F-\nu ^*B)$, $G:=(1-\epsilon) B+\epsilon \nu _*F$, and $\ggamma:=(1-\epsilon)\bbeta+\epsilon \bar\Theta $.
    Since $K_{X'}+G'+\ggamma _{X'} \equiv \nu ^*(K_X+B+\bbeta _X- \omega)$, then $(X,G+\ggamma )$ is a generalized klt pair and $K_X+B+\bbeta _X\equiv K_X+G+\ggamma_X+ \omega $.
\end{proof}

\begin{lemma}\label{l-gklt}   Let $(X,B+\bbeta)$ be a compact K\"ahler generalized klt pair such that $K_X+B+\bbeta _X$ is  nef and big. Then for any $0<\epsilon \ll 1$ we can write
\[(1+\eps) (K_X+B+\bbeta_X) \equiv K_X+\Delta +\ggamma  _X+\omega \]   
such that $(X,\Delta +\ggamma )$ is a generalized klt pair, $\omega$ is a K\"ahler form, and in particular, $\Delta+\ggamma _X+\omega$ is  big.
\end{lemma}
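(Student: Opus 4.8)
The plan is to run the Demailly-regularization argument from the proof of Lemma~\ref{l-big}, but now applied to the big class $K_X+B+\bbeta_X$ itself, splitting off only an $\eps$-fraction of the resulting K\"ahler current. First, since $K_X+B+\bbeta_X$ is big, by \cite[Lemma 3.7]{DH23} I may choose a K\"ahler current $T\equiv K_X+B+\bbeta_X$ with $T\geq \omega_0$ for some K\"ahler form $\omega_0$, together with a log resolution $\nu\colon X'\to X$ of $(X,B)$, taken fine enough that $B'\cup\Supp F$ has simple normal crossings, such that $\nu^*T=\nu^*\omega_0+\Theta+F$, where $F\geq 0$ is an effective $\mbR$-divisor and $[\Theta]\in H^{1,1}_{\BC}(X')$ is nef.

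Next I would produce the decomposition. Writing $(1+\eps)(K_X+B+\bbeta_X)\equiv (K_X+B+\bbeta_X)+\eps T$ and pulling back by $\nu$, the generalized pair relation $K_{X'}+B'+\bbeta_{X'}\equiv \nu^*(K_X+B+\bbeta_X)$ together with the decomposition of $\nu^*T$ gives
\[
\nu^*\big((1+\eps)(K_X+B+\bbeta_X)\big)\equiv K_{X'}+(B'+\eps F)+(\bbeta_{X'}+\eps\Theta)+\eps\,\nu^*\omega_0.
\]
I then set $\omega:=\eps\,\omega_0$, which is K\"ahler, $\ggamma:=\bbeta+\eps\,\bar\Theta$, so that $\ggamma_{X'}=\bbeta_{X'}+\eps\Theta$ is nef, and $\Delta:=B+\eps\,\nu_*F=\nu_*(B'+\eps F)\geq 0$, with boundary $\Delta':=B'+\eps F$ on $X'$. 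Since $\Theta+F=\nu^*(T-\omega_0)$ is a $\nu$-pullback class, $K_{X'}+\Delta'+\ggamma_{X'}$ descends, i.e. $K_{X'}+\Delta'+\ggamma_{X'}\equiv \nu^*(K_X+\Delta+\ggamma_X)$ with $K_X+\Delta+\ggamma_X\equiv (1+\eps)(K_X+B+\bbeta_X)-\omega$; here the injectivity of $\nu^*$ in Lemma~\ref{lem:trivial-pullback}(1) guarantees that the descended class is well defined, and this yields the asserted numerical equivalence.

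It remains to check that $(X,\Delta+\ggamma)$ is generalized klt and that $\Delta+\ggamma_X+\omega$ is big. For the former, $X'$ is smooth, $\ggamma_{X'}$ is nef, and $\Delta'=B'+\eps F$ has simple normal crossings support; since the coefficients of $B'$ are $<1$ and $F$ is a fixed divisor, for $0<\eps\ll 1$ the coefficients of $\Delta'$ remain $<1$, so the pair is generalized klt. For bigness, $\omega$ is a genuine K\"ahler form, $\Delta\geq 0$, and $\ggamma_X$ is pseudoeffective (being the pushforward of the nef, hence pseudoeffective, class $\ggamma_{X'}$), so $\Delta+\ggamma_X+\omega$ is a K\"ahler form plus a closed positive current, hence big. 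The only genuinely delicate point, and the main obstacle, is the klt verification: one must choose the log resolution so that $F$ and $B'$ are simultaneously snc and then take $\eps$ small enough that adding $\eps F$ does not push any coefficient up to $1$; everything else is a direct transcription of the computation in Lemma~\ref{l-big}.
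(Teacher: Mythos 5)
Your proposal is correct and follows essentially the same route as the paper: regularize a K\"ahler current in the big class $K_X+B+\bbeta_X$ via \cite[Lemma 3.7]{DH23} into $\nu^*\omega_0+\Theta+F$ on a log resolution, add an $\eps$-multiple to the pulled-back adjoint class, absorb $\eps F$ into the boundary and $\eps\Theta$ into the nef part, and check sub-kltness for $0<\eps\ll 1$. The only difference is cosmetic: the paper additionally chooses $T$ with $E_+(T)=E^{as}_{nK}([K_X+B+\bbeta_X])$ via \cite[Corollary 4.20]{HP24}, a refinement not needed for the conclusion as stated.
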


\begin{proof}   
Let $\nu :X'\to X$ be a log resolution such that $\bbeta$ descends to $X'$ and
\begin{equation}\label{eqn:log}
    K_{X'}+B'+\bbeta_{X'}=\nu^*(K_X+B+\bbeta_X). 
\end{equation}
Since $K_X+B+\bbeta_X$ is nef and big, by \cite[Corollary 4.20]{HP24} there is a K\"ahler current $T$ with weakly analytic singularities such that $E^{as}_{nK}([K_X+B+\bbeta_X])=E_+(T)$. Let $\omega$ be a K\"ahler form such that $T\geq \omega$. Then passing to a higher resolution it follows from \cite[Lemma 3.7]{DH23} that 
\begin{equation}\label{eqn:resolution-of-current}
    \nu^*(K_X+B+\bbeta_X)\num F+\theta+\nu^*\omega,
\end{equation}
where $F$ is an effective $\mbR$-divisor such that $\nu(\Supp F)=E_+(T)$ and the current $\theta$ represents a nef class. Thus from \eqref{eqn:log} we have
 \[(1+\eps)(K_{X'}+B'+\bbeta_{X'})\num K_{X'}+B'+\bbeta _{X'}+\eps(F+\theta +\nu ^*\omega).\]
 Now let $\Delta':=B'+\eps F$ for $0<\eps\ll 1$, then $(X', \Delta')$ is sub-klt and $[\bbeta _{X'}+\eps \theta]$ is nef. Replacing $\bbeta_{X'}$ by a positive current in its class $[\bbeta_{X'}]\in H^{1,1}_{\BC}(X')$ we may assume that $\bbeta_{X'}+\eps\theta$ is a positive current. Thus $(X,\Delta+\ggamma_X)$ is generalized klt, where $\Delta:=\nu_*\Delta'$ and $\ggamma =\overline{\bbeta_{X'}+\eps \theta}$. Finally, replacing $\epsilon\omega$ by $\omega$ we have 
 \[(1+\eps)(K_X+B+\bbeta_X)\num K_X+\Delta+\ggamma _X+\omega\]
 such that $\Delta+\ggamma_X+\omega$ is big.


\end{proof}

\begin{corollary}\label{c-nk}    
Let $(X,B+\bbeta)$ be a generalized klt pair, where $X$ is a projective variety over $\mbC$. \begin{enumerate}
    \item If the class of $\alpha=K_X+B+\bbeta _X$ is  nef and big but not K\"ahler, then there is a rational curve $C\subset X$ such that $\alpha \cdot C=0$.
    \item If the class of $\alpha=K_X+B+\bbeta _X$ is pseudo-effective but not nef, then there is a rational curve $C\subset X$ such that $\alpha \cdot C<0$.
\end{enumerate}
\end{corollary}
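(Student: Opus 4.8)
The plan is to reduce both statements to the classical projective cone theorem and base-point-free results by using the Garf--Kirschner decomposition to replace the transcendental class $\bbeta_X$ by an honest $\R$-divisor. Write $\bbeta_X \equiv N + \delta$ via Lemma \ref{lem:h2-decomposition}, where $N$ is a nef $\R$-Cartier divisor (nef by Lemma \ref{lem:trans-alg}(2)) and $\delta \in T(X)$ satisfies $\delta \cdot C = 0$ for every curve $C \subset X$. The crucial point is that $\delta$ pairs to zero with \emph{all} curve classes, so for any curve $C$ we have $\alpha \cdot C = (K_X + B + N)\cdot C$. Thus testing positivity of $\alpha$ against curves is identical to testing positivity of the $\R$-Cartier class $K_X + B + N$, which is the setting of the usual projective (generalized) MMP.

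For part (1), I would proceed as follows. Since $\alpha$ is nef and big but not K\"ahler, Lemma \ref{lem:duality} produces a nonzero class $\gamma \in \NA(X)$ with $\alpha \cdot \gamma = 0$; equivalently $\alpha$ lies on the boundary of the nef cone. Passing to the $\R$-divisor $D := K_X + B + N$, the identity $\alpha \cdot C = D \cdot C$ for all curves shows that $D$ is nef (again via Lemma \ref{lem:duality} or \ref{lem:trans-alg}(2)) but not ample, and $D$ is big since $\alpha$ is and bigness descends by Lemma \ref{lem:trans-alg}(3). Now $(X, B + N)$ is a projective generalized klt pair and $D = K_X + B + N$ is nef-big but not ample, so by the projective cone theorem for generalized klt pairs (the base-point-free / cone theorem of \cite{BCHM10}, or its generalized-pair version) there is an extremal rational curve $C$ with $D \cdot C = 0$. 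Since $\delta \cdot C = 0$ automatically, we conclude $\alpha \cdot C = 0$, as desired.

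For part (2), the argument is parallel but uses the non-vanishing/cone theorem for the non-pseudo-effective-threshold. Here $\alpha = K_X + B + \bbeta_X$ is pseudo-effective but not nef, so there is some curve class on which $\alpha$ is negative; transferring to $D = K_X + B + N$ via $\alpha \cdot C = D \cdot C$, the class $D$ is pseudo-effective (by Lemma \ref{lem:trans-alg}(3)) but not nef. Applying the cone theorem of \cite{BCHM10} to the projective generalized klt pair $(X, B+N)$, the negative part of $\NE(X)$ with respect to $K_X + B + N$ is spanned by countably many $(K_X+B+N)$-negative extremal rational curves, and at least one such rational curve $C$ has $D \cdot C < 0$. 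Then $\alpha \cdot C = D \cdot C < 0$ gives the conclusion.

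The main obstacle I anticipate is \emph{rationality}: the cone and contraction theorems guarantee that the extremal curve realizing $\alpha \cdot C \le 0$ can be taken to be \emph{rational}, which is exactly what the statement demands, but this relies on having genuinely reduced to the projective $\R$-divisor setting $(X, B+N)$ where the bend-and-break machinery of \cite{BCHM10} applies. The subtlety is ensuring that $(X, B+N)$ is a bona fide \emph{generalized klt} pair after the decomposition---one must check that writing $\bbeta_X \equiv N + \delta$ respects the generalized pair structure, i.e.\ that $N$ can be incorporated as the divisorial part of the generalized pair with the klt condition preserved (the b-(1,1) form $\bbeta$ descending appropriately). Once this identification is in place, the transfer of intersection numbers is formal because $\delta$ is orthogonal to $B_2(X,\mbR) = N_1(X)_\mbR$, and everything follows from the projective theory; so the real content is the bookkeeping that legitimizes the replacement of $\bbeta_X$ by $N$.
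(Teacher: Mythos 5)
There is a genuine gap, and it occurs at the same point in both parts: you test Kählerness and nefness of the transcendental class $\alpha$ only against curve classes. After writing $\alpha\equiv D+\delta$ with $D=K_X+B+N$ an $\R$-Cartier divisor and $\delta\in T(X)$, it is true that $\alpha\cdot C=D\cdot C$ for every curve $C$, and hence that $D$ is nef and (by Lemma \ref{lem:trans-alg}) big. But your step ``$\alpha$ not K\"ahler, hence $D$ not ample'' does not follow: by Lemma \ref{lem:duality}, failure of K\"ahlerness only produces a nonzero $\gamma\in\NA(X)$ with $\alpha\cdot\gamma=0$, and $\NA(X)\subset N_1(X)$ is a cone of current classes that is in general strictly larger than the span $B_2(X,\mbR)$ of curve classes; $\delta$ is orthogonal to curves but need not be orthogonal to such a $\gamma$, so one cannot conclude $D\cdot\gamma=0$. (A transcendental class can fail to be K\"ahler for reasons invisible to curves: e.g.\ on a projective K3 surface, $D+\delta$ with $D$ ample and $\delta$ orthogonal to $\NS(X)$ with $\delta^2=-D^2$ is nef, positive on every curve, yet not K\"ahler since its self-intersection vanishes.) The assertion that the failure of K\"ahlerness of a nef and big class of the form $K_X+B+\bbeta_X$ \emph{is} detected by a rational curve is precisely the content of the corollary, so assuming ``$D$ is not ample'' is circular. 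The same objection applies to part (2): ``$\alpha$ pseudo-effective but not nef'' gives a class in $\NA(X)$ on which $\alpha$ is negative, not a curve, so you cannot transfer the negativity to $D$.

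The paper's proof does not attempt this reduction. For (1) it uses Lemma \ref{l-gklt} to write $(1+\eps)\alpha\num K_X+\Delta+\ggamma_X+\omega$ with $(X,\Delta+\ggamma)$ generalized klt, $\omega$ K\"ahler and $\Delta+\ggamma_X+\omega$ big, and then invokes the genuinely transcendental result \cite[Proposition 3.1]{HP24} (which, roughly, locates rational curves in the null locus of a nef and big class with this adjoint structure); for (2) it takes the nef threshold $\lambda>0$ of $\alpha$ against a K\"ahler form $\omega$, notes that $\alpha+\lambda\omega$ is nef and big but not K\"ahler, applies part (1) to get a rational curve $C$ with $(\alpha+\lambda\omega)\cdot C=0$, and concludes $\alpha\cdot C=-\lambda\,\omega\cdot C<0$. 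Your Garf--Kirschner bookkeeping is exactly how the paper exploits the \emph{conclusion} of this corollary elsewhere (e.g.\ in the proof of Theorem \ref{t-main}), but it cannot replace the transcendental input needed to prove it.
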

\begin{proof} 
(1) By Lemma \ref{l-gklt}, we may assume that $\alpha\num K_X+\Delta+\ggamma  _X+\omega$ such that $(X,\Delta+\ggamma  )$ is gklt, $\Delta +\ggamma  _X+\omega$ is big, and $\omega $ is K\"ahler. Since $K_X+\Delta +\ggamma  _X+\omega$ is big, then so is $K_X+\Delta +\ggamma  _X+(1-\epsilon)\omega$  for $0<\epsilon \ll 1$. Then we are done by \cite[Proposition 3.1]{HP24}.

(2) Let $\omega$ be a K\"ahler form and $\lambda>0$ be the nef threshold so that $K_X+B+\bbeta _X+\lambda \omega$ is nef but not K\"ahler. Since $\alpha=K_X+B+\bbeta _X$ is pseudo-effective, then $K_X+B+\bbeta _X+\lambda \omega$ is big. By part (1), there is a rational curve $C\subset X$ such that $(K_X+B+\bbeta _X+\lambda \omega) \cdot C=0$. But then $\alpha\cdot C=(K_X+B+\bbeta _X) \cdot C=-(\lambda \omega)\cdot C <0$.
\end{proof}

\begin{lemma}\label{l-gklt2}
    Let $(X,B+\bbeta )$ be a generalized klt pair such that $X$ is a compact K\"ahler variety, $K_X$ is $\mbQ$-Cartier, and $B+\bbeta_X$ is big. 
    Then the following hold:
    \begin{enumerate}
      \item If $X$ is $\Q$-factorial, then there exists a generalized klt pair $(X,G+\ggamma)$ such that
        $\ggamma_{X'}$ represents a K\"ahler class on a log resolution $X'\to X$ and $K_X+B+\bbeta _X\equiv K_X+G+\ggamma _X$.
        \item If $X$ is projective, then we can write $K_X+B+\bbeta _X\equiv K_X+\Delta+\delta$, where $(X,\Delta )$ is a klt pair, $\Delta $ is a big $\mbR$-Cartier divisor and $\delta\in T(X)$ (see \eqref{eqn:trivial-part}).
        \end{enumerate}
\end{lemma}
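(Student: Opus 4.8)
For part (1), the plan is to start from Lemma \ref{l-big}, which already produces a generalized klt pair $(X,G+\ggamma)$ and a K\"ahler form $\omega$ with $K_X+B+\bbeta_X \equiv K_X+G+\ggamma_X+\omega$, and then to absorb $\omega$ into the nef b-form $\ggamma$ so that its trace becomes K\"ahler on a high model. Concretely, I would fix a log resolution $\nu:X'\to X$ on which $\ggamma$ descends, so that $\ggamma_{X'}$ is nef and $K_{X'}+G'+\ggamma_{X'}\equiv \nu^*(K_X+G+\ggamma_X)$ with $G'$ a sub-boundary of coefficients $<1$. Since $\nu$ is projective and bimeromorphic, there is a $\nu$-exceptional effective divisor $E\geq 0$ with $-E$ relatively $\nu$-ample; then for $0<\epsilon\ll 1$ the class $\nu^*\omega-\epsilon E$ is K\"ahler (pullback of a K\"ahler class plus a relatively K\"ahler one), and hence so is $\ggamma^{\mathrm{new}}_{X'}:=\ggamma_{X'}+\nu^*\omega-\epsilon E$ (nef plus K\"ahler). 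Let $\ggamma^{\mathrm{new}}$ be the b-$(1,1)$ form with this trace on $X'$. Because $E$ is $\nu$-exceptional we get $\ggamma^{\mathrm{new}}_X=\ggamma_X+\omega$, and the induced sub-boundary on $X'$ is $G'+\epsilon E$, which still has coefficients $<1$ for small $\epsilon$ and pushes forward to $G$. Thus $(X,G+\ggamma^{\mathrm{new}})$ is generalized klt, $\ggamma^{\mathrm{new}}_{X'}$ is K\"ahler, and $K_X+G+\ggamma^{\mathrm{new}}_X=K_X+G+\ggamma_X+\omega\equiv K_X+B+\bbeta_X$.

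For part (2), the plan is to pass through the Garf-Kirschner decomposition and then manufacture an honest klt big boundary. Since $X$ is projective and klt (hence has rational singularities), Lemma \ref{lem:h2-decomposition} lets me write $\bbeta_X=N+\delta$ with $N\in N^1(X)_\R$ an $\R$-Cartier divisor class and $\delta\in T(X)$. As $\bbeta_X$ is the trace of a nef b-$(1,1)$ form it is nef (by Lemma \ref{l-1}, $\nu^*\bbeta_X$ exceeds the nef class $\bbeta_{X'}$ by an effective exceptional divisor), so Lemma \ref{lem:trans-alg}(2) shows $N$ is nef. Writing $B+\bbeta_X=(B+N)+\delta$ and noting that $B+N$ is exactly the $N^1$-part of the big class $B+\bbeta_X$, Lemma \ref{lem:trans-alg}(3) gives that $B+N$ is big. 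I also observe that $(X,B)$ itself is klt: comparing the defining relation of the generalized pair with $\nu^*(K_X+B)$ shows the discrepancies of $(X,B)$ only increase when the nef part is dropped.

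It then remains to replace $B+N$ by a klt big effective $\Delta$. By Kodaira's lemma $B+N\equiv A+E$ with $A$ ample and $E\geq 0$ effective. Since $N$ is nef, for $0<\epsilon\ll 1$ the class $(1-\epsilon)N+\epsilon A$ is ample, so I may pick a general effective $\R$-divisor $D\equiv (1-\epsilon)N+\epsilon A$ with $(X,(1-\epsilon)B+\epsilon E+D)$ klt (general member of an ample class added to the klt pair $(X,(1-\epsilon)B)$). Setting $\Delta:=(1-\epsilon)B+\epsilon E+D$ gives $\Delta\equiv (1-\epsilon)(B+N)+\epsilon(A+E)\equiv B+N$, so $\Delta$ is big and effective, $(X,\Delta)$ is klt, and $K_X+\Delta+\delta\equiv K_X+(B+N)+\delta=K_X+B+\bbeta_X$ with $\delta\in T(X)$.

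In both parts the main obstacle is producing a genuinely positive representative while staying inside the pair. In (1) one must upgrade the merely nef class $\ggamma_{X'}+\nu^*\omega$ to a K\"ahler one without destroying the generalized klt condition, which is exactly what forces the compensating exceptional perturbation $-\epsilon E$, paid for in the boundary by $+\epsilon E$. In (2) the crux is that the nefness of $N$ is precisely what makes $(1-\epsilon)N+\epsilon A$ ample, and hence what allows Kodaira's lemma to be combined with the klt pair $(X,B)$ to realize the class $B+N$ by an effective klt big divisor $\Delta$.
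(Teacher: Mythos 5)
Your part (1) is correct and follows essentially the same route as the paper: start from Lemma \ref{l-big}, pass to a log resolution carrying a $\nu$-exceptional $E$ with $-E$ relatively ample, and absorb $\nu^*\omega-\epsilon E$ into the nef part while paying $+\epsilon E$ in the boundary.

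Part (2), however, has a genuine gap: you perform the Garf--Kirschner decomposition on $X$ itself, whereas the paper performs it on the log resolution $X'$, and this difference is not cosmetic. First, Lemma \ref{lem:h2-decomposition} requires $X$ to have rational singularities; your justification is that ``$(X,B)$ is klt,'' but $K_X+B$ is not assumed $\mbR$-Cartier in the definition of a generalized pair (only $K_X+B+\bbeta_X$ is a class in $H^{1,1}_{\rm BC}(X)$), so the discrepancy comparison you invoke is not available, and in any case the assertion that the underlying variety of a generalized klt pair has rational singularities is essentially what this lemma is designed to \emph{produce}: the paper only obtains it at the very end, from the constructed klt pair $(X,\Delta)$, and only then pushes $\delta'$ down via Lemma \ref{lem:trivial-pullback}. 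Second, and more seriously, you claim $\bbeta_X$ is nef on $X$ so that Lemma \ref{lem:trans-alg}(2) makes $N$ nef. The trace of a nef b-$(1,1)$ form on a lower model is in general \emph{not} nef (this is the whole reason for working with b-classes), and your justification does not repair this: writing $\nu^*\bbeta_X=\bbeta_{X'}+E$ with $E\geq 0$ exceptional presupposes that $\bbeta_X$ is an $\mbR$-Cartier class (not given, since $X$ is not assumed $\mbQ$-factorial in (2)), and even granting it, for a curve $C\subset \nu(\Ex(\nu))$ with strict transform $C'\subset \Supp(E)$ one only gets $\bbeta_X\cdot C=\bbeta_{X'}\cdot C'+E\cdot C'$, where $E\cdot C'$ may be negative. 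Since, as you say yourself, the nefness of $N$ is the crux that makes $(1-\epsilon)N+\epsilon A$ ample and lets you take a general klt member, the argument breaks at this point.

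The repair is exactly the paper's route: decompose $\bbeta_{X'}\equiv N_{X'}+\delta'$ on the smooth projective $X'$, where $\bbeta_{X'}$ is honestly nef and Lemmas \ref{lem:h2-decomposition} and \ref{lem:trans-alg} apply without caveat; use bigness of $\nu^*(B+\bbeta_X)\equiv B'-F'+N_{X'}+\delta'$ to get bigness of $B'-F'+N_{X'}$, apply Kodaira's lemma and the general-member perturbation entirely on $X'$ to build a sub-klt $\Delta'$ with $K_{X'}+\Delta'$ descending; then set $\Delta=\nu_*\Delta'$, deduce that $X$ has rational singularities from $(X,\Delta)$ being klt, and only then conclude $\delta:=\nu_*\delta'\in T(X)$ from Lemma \ref{lem:trivial-pullback}(1).
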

\begin{proof}
    
   (1) 
   By Lemma \ref{l-big} we may assume that there is a generalized klt pair $(X,G+\ggamma  )$ and a K\"ahler form $\omega$ such that
    $K_X+B+\bbeta _X\equiv K_X+G+\ggamma _X+\omega$. Let $\nu:X'\to X$ be a log resolution of $(X, B+\bbeta)$  and $(X,G+\ggamma  )$ such that there is an effective $\nu$-exceptional $\mbQ$-divisor $E'$ with $-E'$ is $\nu$-ample.
    Write $K_{X'}+G'+\ggamma_{X'}=\nu^*(K_X+G+\ggamma_X)$. For any $0<\epsilon \ll 1$, $\ggamma _{X'}+\nu^*\omega -\epsilon E'$ represents a K\"ahler class in $H^{1,1}_{\rm BC}(X')$ and $(X', G'+\epsilon E')$ is sub klt. Let $\eta'$ be a K\"ahler form cohomologous to $\ggamma _{X'}+\nu^*\omega -\epsilon E'$. Then replacing $\ggamma$ by $\overline{\eta'}$,  (1) follows.

   (2) Let $\nu :X'\to X$ be a log resolution of $(X, B+\bbeta)$ such that $K_{X'}+B'+\bbeta _{X'}=\nu ^*(K_X+B+\bbeta _X)$,  where $[\bbeta _{X'}]\in H^{1,1}_{\BC}(X')$ is nef. Then by Lemmas \ref{lem:h2-decomposition} and \ref{lem:trans-alg}, there is a unique decomposition $\bbeta_{X'}\equiv N_{X'}+\delta '$ such that $N_{X'}$ is a nef $\mbR$-Cartier divisor and $\delta'\in T(X')$. Thus $(X,B+\mathbf N)$ is a generalized pair in the usual sense (see \cite{BZ16}), where $\mathbf N:=\overline {N_{X'}}$. 
 We have \[K_X+B+\mathbf N_X+\nu_*\delta'=\nu_*(K_{X'}+B'+\mathbf N_{X'}+\delta') \equiv \]\[\nu_*(K_{X'}+B'+\bbeta_{X'})\equiv K_X+B+\bbeta_X,\] and $\delta'=\nu^*\delta$ for some $\delta \in T(X)$ 
 by Lemma \ref{lem:trivial-pullback}. Thus it follows from Lemmas \ref{lem:h2-decomposition} and \ref{lem:trans-alg} that $B+\mathbf N _X$ is an $\mbR$-Cartier big divisor. Now since $\nu_*(B'+\mathbf N_{X'})=B+\mathbf N_X$, there is a $\nu$-exceptional effective $\mbR$-divisor $F'\geq 0$ such that $B'+\mathbf N_{X'}+F'\geq \nu^*(B+\mathbf N_X)$, and hence $B'+\mathbf N_{X'}+F'$ is big. Let $B'+F'+\mathbf N_{X'}\sim_{\mbR} A'+E'$, where $A'$ is an ample $\mbQ$-divisor and $E'$ is an effective $\mbR$-divisor. Then for $0<\epsilon\ll 1$, $(X', B'+\epsilon(-F'-B'+E'))$ is sub-klt. Pick a general effective $\mbR$-divisor $0\leq G'\sim_{\mbR} \epsilon A'+(1-\epsilon)\mathbf N_{X'}$ such that $(X', \Delta':=B'+\epsilon(-F'-B'+E')+G')$ is sub-klt. Now observe that \[K_{X'}+\Delta'+\delta'\equiv K_{X'}+B'+\mathbf N_{X'}+\delta'\equiv K_{X'}+B'+\bbeta_{X'}=\nu^*(K_X+B+\bbeta_X).\] 
 By Lemma \ref{lem:trivial-pullback}, it follows that $K_{X'}+\Delta'\equiv \nu^*(K_X+\Delta)$, where $\Delta:=\nu_*\Delta'$. Thus $(X, \Delta)$ is klt and $\Delta$ is big, as $\Delta+\nu_*\delta'\equiv B+\bbeta_X$ is big. Clearly we  have $K_X+B+\bbeta_X\equiv K_X+\Delta+\delta$, where $\delta:=\nu_*\delta'\in T(X)$. This completes the proof of (2).

\end{proof}

We will now prove our main theorem.
\begin{proof}[Proof of Theorem \ref{t-main}]
    By Lemma \ref{l-gklt2}(2) we can write $K_X+B+\bbeta _X\equiv K_X+\Delta+\delta$, where $(X,\Delta )$ is klt, $\Delta $ is big and $\delta\in T(X)$. We run the $(K_X+\Delta)$-MMP with scaling of an ample divisor.
    This MMP terminates by \cite[Corollary 1.4.2]{BCHM10}; let $\phi :X\dasharrow X'$ be the output of this MMP. Since $\delta\in T(X)$, each step of this MMP is $\delta$-trivial and $\delta':=\phi_*\delta\in T(X')$ by Lemma \ref{lem:trivial-pullback}(1). 
    
    If 
    $K_X+\Delta$ is pseudo-effective, then it follows that   $K_{X'}+\Delta '$ is nef and $\Delta '$  is big, hence $K_{X'}+\Delta '$ is semi-ample (see \cite[Corollary 3.9.2]{BCHM10}). Let $g:X'\to Z$ be the contraction induced by $K_{X'}+\Delta'$. Since $\Delta'$ is big, by a standard perturbation technique we can write $\Delta'\sim_{\mbR} \Delta''+H''$, where $H''$ is an ample $\mbR$-Cartier divisor, and $\Delta''\>0$ is a $\mbQ$-divisor such that $(X',\Delta'')$ is klt. Then $-(K_{X'}+\Delta'')\num_Z H''$ is $g$-ample, and thus by \cite[Lemma 8.8]{DHP22}, $Z$ has rational singularities. Therefore by Lemma \ref{lem:trivial-pullback}(2), there is $\delta_Z\in T(Z)$ such that $\delta'=g^*\delta _Z$.

    Now, by the canonical bundle formula as in \cite[Theorem 3.1]{FG12} and \cite[Theorem 4.1]{Amb05}, $K_{X'}+\Delta '\equiv g^*(K_Z+\Delta _Z)$, where $(Z,\Delta _Z)$ is klt and $A_Z:=K_Z+\Delta _Z$ is ample.
    Note that $K_{X'}+B'+\bbeta _{X'}\equiv K_{X'}+\Delta'+\delta'\equiv g^*(A_Z+\delta_Z)$ and so by the canonical bundle formula of \cite[Theorem 2.3]{HP24}, $K_{X'}+B'+\bbeta _{X'}\equiv g^*(K_Z+B_Z+\bbeta ^Z_Z)$, where $(Z,B_Z+\bbeta ^Z)$ is generalized klt.
    
     We will now prove that $\alpha _Z$ is K\"ahler. Suppose that $\alpha _Z:=K_Z+B_Z+\bbeta ^Z_Z$ is big but not K\"ahler, then by Corollary \ref{c-nk}, there is a rational curve $C\subset Z$ such that $\alpha _Z\cdot C\leq 0$ and hence \[ A_Z\cdot C=(K_{Z}+\Delta _Z)\cdot C= (K_{Z}+\Delta _Z+\delta _Z)\cdot C=\alpha _Z \cdot C\leq 0\] which is impossible. Thus $\alpha _Z$ is K\"ahler in this case.

    Suppose now that $\alpha _Z:=K_Z+B_Z+\bbeta ^Z_Z$ is not big. Since $B'+\bbeta_{X'}$ is big,   by Lemma \ref{l-big} we can write $B'+\bbeta_{X'}\equiv B^* +\bbeta_{X'} ^\sharp +\omega'$ such that $(X, B^* +\bbeta_{X'} ^\sharp)$ is gklt and $\omega '$ is a  K\"ahler form. Let $H_Z$ be a general ample $\mbQ$-divisor on $Z$ such that $\omega'':=\omega'-g^*H_Z$ represents a K\"ahler class. Set $\bbeta_{X'}^*:=\bbeta_{X'}^\sharp+\overline{\omega''}$, then  $B'+\bbeta_{X'}\equiv B^*+\bbeta_{X'} ^*+g^*H_Z$ such that $(X', B^*+g^*H_Z+\bbeta_{X'} ^*)$ is gklt. Thus $B_Z+\bbeta ^Z_Z\equiv B_Z^*+\bbeta ^{*,Z}_Z+H_Z$ and so $B_Z+\bbeta ^Z_Z$ is big. Now if $\mu:\bar Z\to Z$ is a small $\mbQ$-factorization of $Z$, then $K_{\bar Z}+B_{\bar Z}+\bbeta^Z_{\bar Z}=\mu^*(K_Z+B_Z+\bbeta^Z_Z)$, and hence $K_{\bar Z}$ is not pseudo-effective, as $B_{\bar Z}+\bbeta^Z_{\bar Z}$ is big but $K_{\bar Z}+B_{\bar Z}+\bbeta^Z_{\bar Z}$ is not. In particular, $\bar Z$, and hence $Z$ is uniruled.
    Let $h:Z\dasharrow W$ be the maximal rationally chain
connected (MRCC) fibration. Then $h$ is a proper morphism over an open subset $U\subset W$. Let $F$ be a general fiber of $h$. Then by adjunction $(F, \Delta_F)$ is klt, where $(K_Z+\Delta_Z)|_F=K_F+\Delta_F$. Thus $F$ has rational singularities, and it is also rationally connected due to \cite[Corollary 1.5(2)]{HM07}. If $\tilde F\to F$ is a resolution of singularities of $F$, then by \cite[Corollary 4.18(a)]{Deb01}, $H^2(F, \mcO_F)\cong H^2(\tilde F, \mcO_{\tilde F})\cong \overline{H^0(\tilde F, \Omega_{\tilde F}^2)}=0$. In particular, $\delta _Z|_F=0$ and so $\alpha _Z|_F\equiv A_Z|_F$ is ample.

Following \cite{Gue20} and \cite[Theorem 5.2]{CH20}, we will show that then $\alpha _Z$ is big, which is impossible. To see this, let $\nu :Z'\to Z$ be a resolution of singularities of $Z$ such that the composite $h':=h\circ \nu: Z'\to W$ is a morphism. We may assume that $W$ is smooth and there is an effective $\nu$-exceptional $\mbR$-divisor $E\geq 0$ on $Z'$ such that $-E$ is $\nu$-ample. Let $\alpha_{Z'}:=\nu ^*\alpha_Z$ and $\alpha _\epsilon:=\alpha _{Z'}-\epsilon E$ for $\epsilon>0$. If $F'$ is a general fiber of $h'$, then $\alpha _\epsilon|_{F'}$ is a K\"ahler class for any $0<\epsilon\ll 1$, and thus $(K_{Z'}+t\alpha _\epsilon )|_{F'}$ is K\"ahler for $t\gg 0$. Then by \cite[Theorem 5.2]{CH20}, $K_{Z'/W}+t\alpha _\epsilon $ is pseudo-effective. Since $W$ is projective and not uniruled, we know that $K_W$ is pseudo-effective, and hence so is  
\[K_Z+t\alpha_Z =\nu_*(K_{Z'}+t\alpha _\epsilon ).\]
Since $B_{Z}+\bbeta ^Z$ is big, then  \[\alpha _Z=\frac 1{t+1}(K_{Z}+B_{Z}+\bbeta ^Z_Z+t\alpha _Z)\]
is big. This is the required contradiction.\\



    If instead $K_X+\Delta$ is not pseudo-effective, then we have a Mori fiber space $g:X'\to Z$ such that $-(K_{X'}+\Delta ')$ is $g$-ample.
Thus by \cite[Lemma 8.8]{DHP22} and Lemma \ref{lem:trivial-pullback}(2),
$\delta'=g^*\delta _Z$ for some $\delta _Z\in T(Z)$. 
Note that then $\phi ^{-1}: X'\dasharrow X$ is an isomorphism on a big open subset $U\subset X'$ (i.e. the complement of a codimension 2 subset). The general fiber of $g$ is projective and a general complete intersection $C$ curve on such a fiber is contained in $U$. Thus 
\[(K_X+B+\bbeta _X)\cdot \phi ^{-1}_*C =(K_{X'}+B'+\bbeta _{X'})\cdot C=(K_{X'}+\Delta')\cdot C<0.\] 
Thus $K_X+B+\bbeta _X$ is not pseudo-effective.
\end{proof}

\begin{corollary}
    Let $(X,B+\bbeta)$ be a projective generalized klt pair with $\mbQ$-factorial singularities such that $K_X+B+\bbeta_X$ is big. Then $(X,B+\bbeta)$ admits a good minimal model.
\end{corollary}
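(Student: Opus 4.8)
The plan is to reduce to Theorem~\ref{t-main}(1) by enlarging the nef part of the pair until the boundary itself becomes big, at the cost of replacing $K_X+B+\bbeta_X$ by a fixed positive multiple of itself. Write $\alpha:=K_X+B+\bbeta_X$, which is big by hypothesis. Since $X$ is projective and $\mbQ$-factorial, $\alpha$ has local potentials, so I would mimic the proof of Lemma~\ref{l-big}, except that I would apply Demailly's regularization theorem to the big class $\alpha$ itself rather than to $B+\bbeta_X$. This produces a K\"ahler current $T\num\alpha$ with weakly analytic singularities and a K\"ahler form $\omega_0$ with $T\geq\omega_0$. On a sufficiently high log resolution $\nu:X'\to X$ on which $\bbeta$ descends and $K_{X'}+B'+\bbeta_{X'}=\nu^*\alpha$, the result \cite[Lemma~3.7]{DH23} gives
\[
\nu^*\alpha\num \nu^*\omega_0+\Theta+F,
\]
where $F\geq 0$ is an effective $\mbR$-divisor supported on the simple normal crossings locus and $\Theta$ is a closed positive $(1,1)$ current whose class is nef.

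The purpose of the scaling is that the extra copy of $\alpha$ provides exactly the room needed to extract a K\"ahler form while leaving the identity $\nu^*\alpha=K_{X'}+B'+\bbeta_{X'}$ intact. Concretely, for $0<\epsilon\ll 1$,
\[
(1+\epsilon)\,\nu^*\alpha\num K_{X'}+(B'+\epsilon F)+\bigl(\bbeta_{X'}+\epsilon\Theta+\epsilon\,\nu^*\omega_0\bigr).
\]
Here $(X',B'+\epsilon F)$ is sub-klt for $\epsilon$ small (the support of $F$ lying in the SNC locus and $B'$ having coefficients $<1$), while $\bbeta_{X'}+\epsilon\Theta+\epsilon\,\nu^*\omega_0$ is nef, being a sum of nef classes. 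Setting $\omega:=\epsilon\omega_0$, pushing forward by $\nu$, and letting $\ggamma:=\overline{\bbeta_{X'}+\epsilon\Theta+\nu^*\omega}$, I would obtain a generalized klt pair $(X,G+\ggamma)$ with $G:=B+\epsilon\,\nu_*F\geq 0$ such that
\[
(1+\epsilon)(K_X+B+\bbeta_X)\num K_X+G+\ggamma_X.
\]
Since the trace $\ggamma_X=\nu_*(\bbeta_{X'}+\epsilon\Theta)+\omega$ now contains the K\"ahler class $\omega$, the boundary $G+\ggamma_X$ is big.

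Now $(X,G+\ggamma)$ is a $\mbQ$-factorial projective generalized klt pair with $G+\ggamma_X$ big, and $K_X+G+\ggamma_X\num(1+\epsilon)\alpha$ is big, hence pseudo-effective; so Theorem~\ref{t-main}(1) yields a good minimal model $\phi:X\dasharrow X^m$ for $(X,G+\ggamma)$. It remains to transfer this to the original pair. Because $K_X+G+\ggamma_X$ and $K_X+B+\bbeta_X$ are positive multiples of one another, a curve class is negative for one exactly when it is negative for the other; hence every step of the above MMP is simultaneously a step of the $(K_X+B+\bbeta_X)$-MMP, and $\phi$ is a run of the latter as well. On $X^m$ one has $K_{X^m}+B^m+\bbeta_{X^m}\num\tfrac{1}{1+\epsilon}(K_{X^m}+G^m+\ggamma_{X^m})$, a positive multiple of a semiample (in particular nef) class, hence itself semiample. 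Therefore $X^m$ is a good minimal model for $(X,B+\bbeta)$.

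I expect the main obstacle to be the first step, namely carrying out the Demailly-regularization bookkeeping for the big class $\alpha$ and verifying that the resulting folded pair $(X,G+\ggamma)$ is genuinely generalized klt with $K_X+G+\ggamma_X$ proportional to $\alpha$. This is precisely the analogue of Lemma~\ref{l-big} with the bigness hypothesis transferred from $B+\bbeta_X$ to $K_X+B+\bbeta_X$, and it is the $(1+\epsilon)$-scaling---impossible to avoid, since $\alpha=K_{X'}+B'+\bbeta_{X'}$ already accounts for the full class on the resolution---that makes the extraction of a genuine K\"ahler form feasible.
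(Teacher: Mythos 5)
Your proposal is correct and follows essentially the same route as the paper: decompose the big class $\nu^*(K_X+B+\bbeta_X)$ on a log resolution into a K\"ahler piece plus an effective divisor, absorb an $\epsilon$-multiple of that decomposition into the pair so that $(1+\epsilon)(K_X+B+\bbeta_X)\equiv K_X+G+\ggamma_X$ with $G+\ggamma_X$ big, and invoke Theorem~\ref{t-main}. The only (immaterial) difference is that the paper obtains the decomposition directly from \cite[Theorem 1.4]{Bou02} as $\eta'+C'$ with $\eta'$ K\"ahler, whereas you rerun the Demailly-regularization argument of Lemma~\ref{l-big}; both yield the same scaling identity and the same transfer of the good minimal model back to $(X,B+\bbeta)$.
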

\begin{proof}
Since $K_X+B+\bbeta_X$ is big, passing to a log resolution $\nu:X'\to X$ of $(X, B+\bbeta)$ and then using \cite[Theorem 1.4]{Bou02} we may write that \[K_{X'}+B'+\bbeta_{X'}\num \nu^*(K_X+B+\bbeta_X)=\eta'+C',\] where $\eta'$ is a K\"ahler class and $C'$ an effective $\mbR$-divisor.  
Then $(X',G':=B'+\epsilon C')$ is sub-klt and $\bbeta _{X'}+\epsilon \eta '$ is K\"ahler for any $0<\epsilon\ll 1$. Let $\ggamma :=\overline{\bbeta _{X'}+\epsilon \eta '}$ and $G:=\nu _*G'$, then $(X,G+\ggamma)$ is generalized klt, $G+\ggamma_X$ is big
and \[(1+\epsilon) (K_X+B+\bbeta_X) \equiv K_X+G+\ggamma _X .\]
    By Theorem \ref{t-main}, $(X,G +\ggamma )$ has a good minimal model, which is also a good minimal model for  $(X, B+\bbeta)$.
\end{proof}

\begin{corollary}\label{c-tran} Let $(X, B + \bbeta )$ be a projective generalized klt pair such that $K_X$ is $\mbQ$-Cartier and $B + \bbeta_X$ is big. If $K_X+B+\bbeta_X$ is nef, then it is semiample.
\end{corollary}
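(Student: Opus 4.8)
The plan is to show that $K_X+B+\bbeta_X$ is semiample by producing a fibration $g\colon X\to Z$ together with a K\"ahler class $\alpha_Z$ on $Z$ satisfying $K_X+B+\bbeta_X\equiv g^*\alpha_Z$; this is exactly the object extracted inside the proof of Theorem \ref{t-main}, and the key simplification here is that, since $K_X+B+\bbeta_X$ is already nef, the minimal model program is trivial. Concretely, I would first invoke Lemma \ref{l-gklt2}(2) to write $K_X+B+\bbeta_X\equiv K_X+\Delta+\delta$, where $(X,\Delta)$ is klt, $\Delta$ is big and $\delta\in T(X)$. Because $(X,\Delta)$ is klt, $X$ has rational singularities, so $B_2(X,\mbR)=N_1(X)_{\mbR}$; as $\delta$ lies in the orthogonal complement $T(X)$, this gives $\delta\cdot C=0$ for every curve $C\subset X$, and hence $(K_X+\Delta)\cdot C=(K_X+B+\bbeta_X)\cdot C\geq 0$. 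Thus $K_X+\Delta$ is nef, and since $\Delta$ is big the base point free theorem (\cite[Corollary 3.9.2]{BCHM10}) shows that $K_X+\Delta$ is semiample; let $g\colon X\to Z$ be the induced contraction.

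The second step is to transfer the situation to $Z$ exactly as in the proof of Theorem \ref{t-main}. By the canonical bundle formula (\cite[Theorem 3.1]{FG12}, \cite[Theorem 4.1]{Amb05}) we have $K_X+\Delta\equiv g^*(K_Z+\Delta_Z)$ with $(Z,\Delta_Z)$ klt and $A_Z:=K_Z+\Delta_Z$ ample. After the standard perturbation $\Delta\sim_{\mbR}\Delta''+H''$ (with $H''$ ample and $(X,\Delta'')$ klt) one has $-(K_X+\Delta'')\equiv_Z H''$ $g$-ample, so by \cite[Lemma 8.8]{DHP22} the variety $Z$ has rational singularities, and then Lemma \ref{lem:trivial-pullback}(2) yields $\delta_Z\in T(Z)$ with $\delta=g^*\delta_Z$. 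Feeding the numerical equivalence $K_X+B+\bbeta_X\equiv g^*(A_Z+\delta_Z)$ into the canonical bundle formula of \cite[Theorem 2.3]{HP24} produces a generalized klt pair $(Z,B_Z+\bbeta^Z)$ with $K_X+B+\bbeta_X\equiv g^*\alpha_Z$, where $\alpha_Z:=K_Z+B_Z+\bbeta^Z_Z$.

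It then remains to prove that $\alpha_Z$ is K\"ahler, for then $K_X+B+\bbeta_X\equiv g^*\alpha_Z$ is semiample, as required. I would argue precisely as in Theorem \ref{t-main} via the dichotomy on bigness of $\alpha_Z$. If $\alpha_Z$ were big but not K\"ahler, then (noting that $\alpha_Z$ is nef, since $g^*\alpha_Z$ is nef and $g$ is surjective) Corollary \ref{c-nk}(1) would provide a rational curve $C\subset Z$ with $\alpha_Z\cdot C\leq 0$, whence $A_Z\cdot C=(K_Z+\Delta_Z+\delta_Z)\cdot C=\alpha_Z\cdot C\leq 0$ using $\delta_Z\cdot C=0$, contradicting the ampleness of $A_Z$. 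If instead $\alpha_Z$ were not big, then the uniruledness argument of Theorem \ref{t-main} --- passing to a small $\mbQ$-factorialization to see that $Z$ is uniruled, then to the MRCC fibration of $Z$ and applying \cite[Theorem 5.2]{CH20} together with $\delta_Z|_F=0$ on a general fiber --- would force $\alpha_Z$ to be big after all, a contradiction. Hence $\alpha_Z$ is K\"ahler.

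The main obstacle is concentrated entirely in this last step, and within it in the non-big case, which rests on the uniruledness/MRCC-fibration machinery; the big case and all the preliminary reductions are formal. What makes the corollary genuinely easier than Theorem \ref{t-main} is the observation $\delta\cdot C=0$, which shows that $K_X+\Delta$ is \emph{already} nef and thereby lets us bypass running any flips or divisorial contractions, so that $X$ itself plays the role of the output of the (trivial) MMP. I would also remark that, although $X$ is not assumed $\mbQ$-factorial, this hypothesis is never used: we only invoke Lemma \ref{l-gklt2}(2), the base point free theorem, and the canonical bundle formulas, none of which require $\mbQ$-factoriality.
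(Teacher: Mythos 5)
Your argument is sound in substance, but it takes a different route from the paper's, which is worth spelling out. The paper uses Lemma \ref{l-gklt2}(2) only to extract a klt pair $(X,\Delta)$ and hence a small $\mbQ$-factorialization $\nu:X'\to X$; it then pulls the generalized pair back to $X'$, where $(X',B'+\bbeta_{X'})$ is a $\mbQ$-factorial minimal model, invokes Theorem \ref{t-main} essentially as a black box to get a contraction $g:X'\to Z$ with $K_{X'}+B'+\bbeta_{X'}\equiv g^*\omega$ for a K\"ahler class $\omega$, and finally descends $g$ to $X$ by the rigidity lemma (a $\nu$-vertical, $g$-horizontal curve $C$ would satisfy $0=\nu^*(K_X+B+\bbeta_X)\cdot C=\omega\cdot g_*C>0$). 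You instead stay on $X$ and re-run the interior of the proof of Theorem \ref{t-main}, using the correct observation that $\delta\in T(X)$ is numerically trivial on curves, so $K_X+\Delta$ is already nef and no MMP is needed. Your route avoids the $\mbQ$-factorialization and the rigidity-lemma descent; the price is that you must re-verify that every ingredient of the theorem's proof survives without $\mbQ$-factoriality, rather than quoting the theorem.

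That is also where the one genuine wrinkle lies: your closing claim that $\mbQ$-factoriality is never used is not accurate for the non-big branch of the dichotomy. The argument you defer to there establishes bigness of $B_Z+\bbeta^Z_Z$ by applying Lemma \ref{l-big} to $B'+\bbeta_{X'}$, and Lemma \ref{l-big} assumes $\mbQ$-factoriality --- its proof needs $\bbeta_X$ to have local potentials so that the negativity lemma produces the exceptional divisor $E$. On your $X$, which is only klt, this step does not transplant verbatim. The gap is easy to close in either of two ways: perform that single step after pulling back to a small $\mbQ$-factorialization of $X$ (which exists since $(X,\Delta)$ is klt), or note that in your setting the non-big case cannot occur at all, because $\alpha_Z\equiv A_Z+\delta_Z$ with $A_Z$ ample and $\delta_Z$ numerically trivial on every curve; pulling $(1-\epsilon)A_Z+\delta_Z$ back to a resolution of $Z$, it pairs non-negatively with every movable curve and is therefore pseudo-effective by \cite[Theorem A]{Nys19}, so $\alpha_Z=\epsilon A_Z+\bigl((1-\epsilon)A_Z+\delta_Z\bigr)$ is big. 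With either patch your proof is complete; as written, the non-big case is the one point where you lean on a lemma whose hypotheses you do not satisfy.
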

\begin{proof}
 By Lemma \ref{l-gklt2}(2), there is a big $\mbR$-Cartier divisor $\Delta\geq 0$ such that $(X, \Delta)$ is klt. Thus $X$ admits a small $\mbQ$-factorization $\nu :X'\to X$.
We write $K_{X'}+B'+\bbeta _{X'}=\nu ^*(K_X+B+\bbeta_X)$. Then $(X',B'+\bbeta _{X'})$ is a $\mbQ$-factorial minimal model, and by Theorem \ref{t-main}, there is a contraction $g:X'\to Z$ and a K\"ahler class $\omega $ on $Z$ such that $K_{X'}+B'+\bbeta _{X'}=g^*\omega $.

Let $h:X\dasharrow Z$ be the induced meromorphic map. 
We claim that $h$ is a morphism. To see this, let $C\subset X'$ be a curve that is $\nu $-vertical but not $g$-vertical. Then
\[0=\nu ^* (K_X+B+\bbeta_X)\cdot C=(K_{X'}+B'+\bbeta _{X'})\cdot C=\omega\cdot g_* C>0.\]
This is impossible, so no such curves exist. By the rigidity lemma, it follows that $h$ is a morphism.
But then $K_X+B+\bbeta_X\equiv h^*\omega $, i.e. $K_X+B+\bbeta_X$ is semiample.
\end{proof}

As an application of this corollary we get the following version of the Conjecture \ref{c-tossati} for projective varieties.
\begin{corollary}\label{cor:tossati-conj}
    Let $(X, B)$ be a projective klt pair and $\alpha\in H^{1,1}_{\BC}(X)$ a nef class. If $\alpha-(K_X+B)$ is nef and big, then there is a projective surjective morphism $f:X\to Z$ with connected fibers to a normal projective variety $Z$ with rational singularities such that $\alpha=f^*\omega_Z$ for some smooth K\"ahler form $\omega_Z$ on $Z$. 
\end{corollary}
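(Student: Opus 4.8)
The plan is to reduce Corollary \ref{cor:tossati-conj} to Corollary \ref{c-tran} by constructing an auxiliary generalized pair whose log canonical class recovers the given class $\alpha$. Since $\alpha-(K_X+B)$ is nef and big, the first step is to realize it as the trace of a b-$(1,1)$ form together with a boundary contribution. Concretely, I would set $N := \alpha - (K_X+B)$, which is nef and big. The natural move is to produce a generalized klt pair $(X, B+\bbeta)$ with $\bbeta_X \equiv N$ so that
\[
K_X + B + \bbeta_X \equiv K_X + B + N \equiv \alpha.
\]
To make $\bbeta$ into a bona fide b-$(1,1)$ form, I would pass to a log resolution $\nu:X'\to X$ of $(X,B)$ on which $N$ descends (i.e. $\nu^*N$ is the trace of a nef class, which we may take to be $\bbeta_{X'}$ after adding exceptional corrections), and record $\bbeta = \overline{\bbeta_{X'}}$.

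The key point to verify is that $(X, B+\bbeta)$ is genuinely generalized klt with $B+\bbeta$ big, so that the hypotheses of Corollary \ref{c-tran} are met. For the klt condition I would use that $(X,B)$ is already klt and that adding a nef form $\bbeta$ with $\bbeta_{X'}$ nef does not worsen discrepancies; bigness of $B+\bbeta_X$ follows since $N = \bbeta_X$ is already big (a nef and big class is big), so $B+\bbeta_X$ is big regardless of $B$. Then by construction $K_X+B+\bbeta_X \equiv \alpha$ is nef by hypothesis, so Corollary \ref{c-tran} applies and yields that $\alpha$ is semiample: there is a contraction $f:X\to Z$ and a K\"ahler class on $Z$ pulling back to $\alpha$.

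The remaining work is to upgrade the conclusion of Corollary \ref{c-tran} to the precise statement requested here, namely that $f$ has connected fibers to a \emph{normal projective} $Z$ \emph{with rational singularities} and that $\alpha = f^*\omega_Z$ for an honest \emph{smooth} K\"ahler form $\omega_Z$. Connectedness of fibers is arranged by taking the Stein factorization and absorbing the finite part, noting the contraction in Corollary \ref{c-tran} already factors through $g:X'\to Z$ with connected fibers. Rational singularities of $Z$ follow exactly as in the proof of Theorem \ref{t-main}: since $\Delta$ (from the klt structure produced by Lemma \ref{l-gklt2}) is big, one perturbs $\Delta \sim_\mbR \Delta'' + H''$ with $H''$ ample and $\Delta''$ making $(X',\Delta'')$ klt, so that $-(K_{X'}+\Delta'')$ is $g$-ample, and then \cite[Lemma 8.8]{DHP22} gives rational singularities of $Z$. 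Finally, the K\"ahler class $\omega$ on $Z$ produced by Theorem \ref{t-main} is represented by a smooth K\"ahler form since $Z$ is a normal compact K\"ahler (indeed projective) variety, and $\alpha = f^*\omega_Z$ as classes.

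The main obstacle I anticipate is the bookkeeping in the first step: ensuring that the nef and big class $N=\alpha-(K_X+B)$ can be packaged as the trace $\bbeta_X$ of a b-$(1,1)$ form descending to a log resolution while keeping $(X,B+\bbeta)$ generalized klt. One must be careful that the exceptional corrections introduced when pulling back to $X'$ (to make $\bbeta_{X'}$ a genuine nef class with $K_{X'}+B'+\bbeta_{X'}=\nu^*(K_X+B+\bbeta_X)$) do not destroy the klt property, but this is exactly the situation already handled by the descent conventions in \S\ref{subs:g-pair} together with the negativity lemma, so it should go through without difficulty. Everything downstream is a direct appeal to Corollary \ref{c-tran} and the rational-singularities argument lifted verbatim from the proof of Theorem \ref{t-main}.
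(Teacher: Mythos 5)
Your proposal is correct and follows exactly the route the paper intends: the paper gives no separate argument for Corollary \ref{cor:tossati-conj}, presenting it as a special case of Corollary \ref{c-tran} obtained by setting $\bbeta_X\equiv\alpha-(K_X+B)$, which is precisely your construction. The bigness of $B+\bbeta_X$ from the nef-and-big hypothesis and the klt bookkeeping on a log resolution are handled just as you describe, so the reduction goes through.
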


\begin{proof}
Let $\omega$ be a K\"ahler current on $X$ such that $\omega\equiv \alpha-(K_X+B)$ in $H^{1,1}_{\rm BC}(X)$. Let $\oomega:=\overline\omega$; then $(X, B+\oomega)$ is a gklt pair such that $B+\oomega_X=B+\omega$ is big.  Since $X$ is a projective variety and $(X, B)$ is klt, it is well known that $(X, B)$ has a small $\mbQ$-factorization, say $g:(X', B')\to (X, B)$ such that $(X', B')$ is a $\mbQ$-factorial klt pair and $K_{X'}+B'=g^*(K_X+B)$. Write $\alpha':=g^*\alpha$ and $\omega':=g^*\omega$. Then $\oomega_{X'}=\omega'$, $(X', B'+\oomega)$ is a gklt pair such that  $\alpha'=K_{X'}+B'+\oomega_{X'}$ is nef, and $B'+\oomega_{X'}$ is big. Thus by Corollary \ref{c-tran}, $\alpha'=K_{X'}+B'+\omega'=K_{X'}+B'+\oomega_{X'}$ is semiample, i.e. there is a proper surjective morphism $h:X'\to Z$ with connected fibers to a normal compact K\"ahler variety $Z$ and a K\"ahler class $\omega_Z\in H^{1,1}_{\BC}(Z)$ such that $K_{X'}+B'+\omega'\num h^*\omega_Z$. Then by a similar argument as in the proof of Corollary \ref{c-tran} using the rigidity lemma it follows that there is a unique morphism $f:X\to Z$ satisfying $f\circ g=h$; in particular, $\alpha=K_X+B+\omega\num f^*\omega_Z$. Moreover, from $K_{X'}+B'+\omega'\num h^*\omega_Z$ we have $-(K_{X'}+B')\equiv_h \omega'$ is nef and big, and thus by \cite[Lemma 8.8]{DHP22}, $Z$ has rational singularities. Now observe that $Z$ is Moishezon (as it is dominated by the projective variety $X'$), and also a K\"ahler space with rational singularities,  thus by \cite[Theorem 1.6]{Nam02}, $Z$ is projective.
\end{proof}


\section{Applications}
In this section we give some applications of our main theorems. 
The following application was suggested to us by Mihai P\u{a}un. It is a generalization of \cite[Theorem 1.11]{Bir16}.
\begin{theorem}
    Let $(X,B+\bbeta)$ be a projective generalized pair such that $K_X$ is $\mbQ$-Cartier, $B+\bbeta_X$ is big, and $\alpha:=K_X+B+\bbeta _X$ is nef but not big. Then through every point $x\in X$ there is a rational curve $\Gamma_x$ with $\alpha \cdot \Gamma_x=0$. 
\end{theorem}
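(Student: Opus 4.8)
The plan is to reduce the statement to the purely divisorial case, which is \cite[Theorem 1.11]{Bir16}. First I would apply Lemma \ref{l-gklt2}(2) to write
$\alpha=K_X+B+\bbeta_X\equiv K_X+\Delta+\delta$, where $(X,\Delta)$ is a klt pair, $\Delta$ is a big $\R$-divisor, and $\delta\in T(X)$. The defining property of the class $T(X)$ is that $\delta\cdot C=0$ for every curve $C\subset X$ (see \eqref{eqn:trivial-part}), so that $\alpha\cdot C=(K_X+\Delta)\cdot C$ for all curves. Consequently it suffices to produce, through every $x\in X$, a rational curve $\Gamma_x$ with $(K_X+\Delta)\cdot\Gamma_x=0$; then automatically $\alpha\cdot\Gamma_x=0$.

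Next I would verify that $L:=K_X+\Delta$ satisfies the hypotheses of \cite[Theorem 1.11]{Bir16}, i.e.\ that $L$ is nef but not big. Nefness is immediate from Lemma \ref{lem:trans-alg}(2), since $\alpha$ is nef. The real content is the non-bigness of $L$, and this is the step I expect to be the main obstacle. (The genuinely hard part of the theorem, namely producing rational curves through \emph{every} point, including those in special fibres, is precisely what \cite{Bir16} establishes via specialization of rational curves; so the work here is the reduction, whose crux is non-bigness of $L$.)

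To establish non-bigness I would argue by contradiction using the semiampleness of $\alpha$. Since $(X,B+\bbeta)$ is a projective generalized klt pair with $B+\bbeta$ big and $\alpha$ nef, Corollary \ref{c-tran} gives that $\alpha$ is semiample; let $h\colon X\to Z$ be the associated contraction and $\omega$ a K\"ahler class on $Z$ with $\alpha\equiv h^*\omega$. Because $\alpha$ is not big we must have $\dim Z<\dim X$ (otherwise $h$ is generically finite and $h^*\omega$, hence $\alpha$, is big), so every fibre of $h$ has dimension $\geq\dim X-\dim Z>0$. Now suppose $L$ were big; being also nef, Kodaira's lemma lets us write $L\sim_{\R}A+E$ with $A$ ample and $E\geq 0$ effective, so any curve $C$ with $L\cdot C=0$ must lie in $\Supp(E)$, since otherwise $L\cdot C=A\cdot C+E\cdot C\geq A\cdot C>0$. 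But every $x\in X$ lies in a fibre of $h$, and choosing any curve $C$ through $x$ inside that (positive-dimensional) fibre gives $\alpha\cdot C=\omega\cdot h_*C=0$, whence $L\cdot C=\alpha\cdot C-\delta\cdot C=0$; thus $C\subset\Supp(E)$ and in particular $x\in\Supp(E)$. This forces $\Supp(E)=X$, contradicting $E$ being a proper effective divisor. Hence $L$ is not big.

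Finally I would invoke \cite[Theorem 1.11]{Bir16} for the klt pair $(X,\Delta)$: as $L=K_X+\Delta$ is nef but not big, through every point $x\in X$ there is a rational curve $\Gamma_x$ with $L\cdot\Gamma_x=0$. By the first paragraph this yields $\alpha\cdot\Gamma_x=(K_X+\Delta)\cdot\Gamma_x+\delta\cdot\Gamma_x=0$, completing the argument. If \cite[Theorem 1.11]{Bir16} is stated under a $\Q$-factoriality hypothesis, I would first pass to a small $\Q$-factorialization $\mu\colon\widetilde X\to X$ (which exists since $(X,\Delta)$ is klt), run the argument for $K_{\widetilde X}+\widetilde\Delta=\mu^*L$, and push the resulting rational curves forward; since $\mu$ is an isomorphism in codimension one, the curves still pass through every point of $X$.
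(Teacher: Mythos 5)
Your route is genuinely different from the paper's and, where it applies, considerably shorter. The paper does not use \cite[Theorem 1.11]{Bir16} as a black box: after passing to a dlt model it re-runs Birkar's argument --- MRCC fibration, non-bigness of $\alpha|_F$ on general fibres via \cite[Theorem 5.2]{CH20}, then \cite[Lemma 12.1]{Bir16}, bend-and-break and boundedness of the resulting family of rational curves. You instead reduce directly to Birkar's theorem by splitting off the curve-trivial part $\delta\in T(X)$, and your argument for the non-bigness of $L=K_X+\Delta$ (semiampleness of $\alpha$ via Corollary \ref{c-tran}, positive-dimensional fibres of the associated contraction, and Kodaira's lemma forcing $\Supp(E)=X$) is correct and clean. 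You are also right that this step needs a genuine argument: $\delta$ need not be pseudo-effective (it is only orthogonal to curve classes, not to all movable $(n-1,n-1)$-classes), so ``$L$ big $\Rightarrow\alpha$ big'' is not formal. In the generalized \emph{klt} case your proof works and is arguably preferable to the paper's.

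The gap is the klt hypothesis. The theorem is stated for a generalized \emph{pair}, and both of your key inputs --- Lemma \ref{l-gklt2}(2) and Corollary \ref{c-tran} --- require $(X,B+\bbeta)$ to be generalized klt. The paper's first step is precisely the reduction you skip: it passes to a dlt model $\nu:X'\to X$ and replaces $(X,B+\bbeta)$ by $(X',B'+D'+\bbeta)$ with $B'=\nu^{-1}_*B+\Ex(\nu)$ and $D'\geq 0$, after which only the underlying variety is $\Q$-factorial klt, not the pair. This matters because producing an effective $\Delta$ with $(X,\Delta)$ klt and $K_X+\Delta\equiv\alpha-\delta$ is not automatic for a big class: the proof of Lemma \ref{l-gklt2}(2) manufactures such a $\Delta$ only by perturbing the sub-klt boundary $B'$ and the (movable) nef part, and this breaks when coefficients reach $1$ or when the effective part of the class is rigid with large multiplicity. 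The paper's argument is arranged so that $(X,\Delta)$ is never required to be klt --- it only needs $X$ itself klt and $K_X\cdot\mathcal{C}_x<0$ to run bend-and-break. So as written your proof establishes the gklt case only; for the stated theorem you would need to carry out the dlt reduction and then either justify the existence of a klt $\Delta$ (unclear in general) or revert to the paper's direct argument. A smaller loose end: in your $\Q$-factorialization fallback, a rational curve through a point of $\mu^{-1}(x)$ produced upstairs could be $\mu$-exceptional, in which case its image is a point and you do not get a curve through $x$.
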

\begin{proof}
   Let $\nu :X'\to X$ be a dlt model for $(X,B +\bbeta)$ (see the proof of \cite[Theorem 1.6]{HP24}).
    Then $(X',B'+\bbeta )$ is a generalized dlt pair and $X$ is strongly $\mbQ$-factorial, where $B':=\nu ^{-1}_*B +{\rm Ex}(\nu)$ and $K_{X'}+B'+\bbeta _{X'}+D'=\nu ^*(K_X+B + \bbeta _X)$ for some effective $\R$-divisor $D '\geq 0$.
Replacing $(X, B+\bbeta)$ by $(X', B'+D'+\bbeta)$ we may assume that $X$ has strongly $\mbQ$-factorial klt singularities.
    
    Since $B+\bbeta _X$ is big and $\alpha\equiv K_{X}+B+\bbeta _{X}$ is not big, then $K_{X}$ is not pseudo-effective, and hence $X$ is uniruled. Let $g:X\dasharrow Z$ be the maximal rationally chain connected fibration, then $g$ is a proper morphism over an open subset of $Z$. Let $F$ be a general fiber of $g$, then $F$ is rationally chain connected. Since $X$ is klt, by adjunction $F$ has klt singularities. Then by \cite[Corollary 1.5(2)]{HM07}, $F$ is rationally connected. Moreover, since $F$ has rational singularities (as it is klt), by \cite[Corollary 4.18(a)]{Deb01}, $H^2(F, \mcO_F)=0$.  
In particular, $\alpha |_F$ is the class of a nef $\mbR$-divisor. 

    By Lemma \ref{lem:h2-decomposition} and \ref{lem:trans-alg}, we may write $\alpha =K_X+\Delta_0 +\delta$, where $\Delta_0$ is a big $\mbR$-divisor and $\delta \in T(X)$. Write $\Delta _0\equiv A+\Delta$, where  $A$ is ample $\mbQ$-divisor and $\Delta \geq 0$ is an effective $\mbR$-divisor. 

    We claim that $\alpha |_F$ is not big. To see this it suffices to show that if  $\alpha |_F$ is big, then $\alpha$ is big. Passing to  higher resolutions $\nu :X'\to X$ and $\mu:Z'\to Z$, we may assume that $g':X'\to Z'$ is a morphism of smooth projective varieties and we may write $(B')^{\geq 0}+\bbeta _{X'} \equiv \omega' +G'$, where $\omega'$ is K\"ahler and $G'$ is an effective $\R$-divisor. Note that since $\alpha |_F$ is big then so is $\alpha '|_{F'}$, where $\alpha':=\nu ^* \alpha$. It follows that $(K_{X'}+t\alpha ')|_{F'}$ is big for $t\gg 0$. Since $t\alpha '+\frac 12 \omega '$ is K\"ahler, by \cite[Theorem 5.2]{CH20}, it follows that $K_{X'/Z'}+t\alpha' +\frac 12 \omega'$ is pseudo-effective for $t\gg 0$. Since $K_{Z'}$  is pseudo-effective, it follows that $K_{X'}+t\alpha' +\frac 12 \omega'$ is pseudo-effective and  
    \[(1+t)\alpha'+(B')^{<0} \equiv K_{X'}+t\alpha' + \omega'+G'\] is big. 
    Since $(B')^{<0}$ is $\nu$-exceptional, then \[\alpha=\nu _*\left(\alpha'+\frac 1 {1+t}(B')^{<0}\right)\] is also big. This is the required contradiction.

    Let $d=\dim F$. Since $\alpha|_F$ is represented by a nef $\mbR$-Cartier divisor but not big, then by \cite[Lemma 12.1]{Bir16}, it follows that there exists a $0<\varepsilon \ll 1$ and a very ample divisor $H$ on $F$  such that $(\alpha -\varepsilon A)|_F\cdot H^{d-1}=0$. We remark that as $H|_F^{d-1}$ is the class of a curve on $F\subset X$, then $\delta \cdot H^{d-1}=0$. 
    It follows that if we fix $m>0$, then for  $t=\frac {1-\varepsilon}{1+m\varepsilon}$ we have   
    \[(K_X+\Delta +t(A+m\alpha))|_F\cdot H^{d-1}=(\alpha -A+t(A+m\alpha))|_F\cdot H^{d-1}=0\] where $0<1-t\ll 1$.
In particular, through a general point of $x\in F$ there is a complete intersection curve $\mathcal C _x\subset F$ such that  $(K_X+\Delta +t(A+m\alpha))\cdot \mathcal C _x=0$.
We also note a general complete intersection curve $\mathcal C _x$
is contained in the smooth locus of $F$ and since $A+\Delta$ is big, then $K_F\cdot \mathcal C_x=K_X\cdot \mathcal C_x<0$.
Arguing as in  the proof \cite[Theorem 1.11]{Bir16}, we then have a rational curve $\mathcal L_x$ passing through a general point $x\in X$ such that
\[0<A\cdot \mathcal L _x\leq (A+m\alpha)\cdot \mathcal L_x\leq 3\dim X.\] 
Thus these curves belong to a bounded family (independent of $m$), and hence they belong to finitely many distinct numerical equivalence classes. In particular, there is a rational curve $\Gamma_x\subset X$ of this family $\{\mathcal L_x\}$ such that $(A+m\alpha)\cdot \Gamma_x\leq 2\dim X$ for infinitely many $m\in\mbZ^+$. This is a contradiction unless $\alpha\cdot \Gamma_x=0$, as $A$ is ample and $\alpha$ is nef.

\end{proof}

Next we prove the transcendental cone theorem for projective varieties. 
\begin{theorem}\label{thm:cone}
    Let $(X,B+\bbeta )$ be a projective generalized klt pair. Then there are at most countably many rational curves $\{\Gamma_i\}_{i\in I}$ such that $0<-(K_X+B+\bbeta _X)\cdot \Gamma _i\leq 2\dim X$ and 
    \[\overline{\rm NA}(X)=\overline{\rm NA}(X)_{(K_X+B+\bbeta _X)\geq 0}+\sum _{i\in I}\R ^+[\Gamma _i].\] 
Moreover,  if  $K_X$ is $\mbQ$-Cartier and  $B+\bbeta_X $ is big, then $I$ is finite.

\end{theorem}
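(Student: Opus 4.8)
The plan is to establish the transcendental cone theorem by reducing the generalized klt case to the algebraic cone theorem of \cite{BCHM10} via the Garf-Kirschner decomposition, exactly as in the proof of the main theorem. First I would invoke Lemma \ref{lem:h2-decomposition} and Lemma \ref{lem:trans-alg} on a suitable log resolution to write $K_X+B+\bbeta_X \equiv K_X+\Delta+\delta$ where $(X,\Delta)$ is klt (and $\Delta$ big whenever $B+\bbeta$ is big) and $\delta \in T(X)$ satisfies $\delta \cdot C = 0$ for every curve $C \subset X$. The crucial point is that since $\delta$ is numerically trivial on all curves, the pairing $(K_X+B+\bbeta_X)\cdot \gamma = (K_X+\Delta)\cdot \gamma$ agrees for all $\gamma \in N_1(X)$, so the half-space $\NA(X)_{(K_X+B+\bbeta_X)\geq 0}$ coincides with $\NA(X)_{(K_X+\Delta)\geq 0}$ and the $(K_X+B+\bbeta_X)$-negative extremal rays are literally the same as the $(K_X+\Delta)$-negative ones.

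Next I would apply the usual algebraic cone theorem (as in \cite{KM98} or \cite{BCHM10}) to the klt pair $(X,\Delta)$. This produces countably many rational curves $\{\Gamma_i\}$ with $0 < -(K_X+\Delta)\cdot \Gamma_i \leq 2\dim X$ and the decomposition
\[
\NA(X) = \NA(X)_{(K_X+\Delta)\geq 0} + \sum_{i\in I}\R^+[\Gamma_i].
\]
By the numerical-equivalence observation above, $-(K_X+B+\bbeta_X)\cdot \Gamma_i = -(K_X+\Delta)\cdot \Gamma_i$, so the same curves satisfy the bound $0 < -(K_X+B+\bbeta_X)\cdot \Gamma_i \leq 2\dim X$, and the decomposition rewrites verbatim with $K_X+\Delta$ replaced by $K_X+B+\bbeta_X$. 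This gives the first (countable) statement directly.

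For the finiteness claim when $B+\bbeta$ is big, I would use that $\Delta$ is then big, so $(X,\Delta)$ is a klt pair with big boundary and the algebraic cone theorem gives finitely many extremal rays in this situation (this is the standard consequence that the $(K_X+\Delta)$-negative part of the cone is rational polyhedral when $K_X+\Delta$ is, say, relatively to an ample, or more directly because $K_X+\Delta \equiv A + E$ with $A$ ample gives finiteness of $(K_X+\Delta)$-negative rays). Concretely, writing $\Delta \sim_\R A + \Delta'$ with $A$ ample and $(X,\Delta')$ klt, the negative extremal rays of $K_X+\Delta$ are a subset of those of $K_X+\Delta'+\tfrac12 A$, which are finite by the rational polyhedrality of the $(K_X+\Delta')$-negative part of $\NA(X)$ near the ample cone.

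The main obstacle I anticipate is purely bookkeeping rather than conceptual: I must verify that the decomposition $K_X+B+\bbeta_X \equiv K_X+\Delta+\delta$ can be arranged with $\delta$ trivial on all curves of $X$ (not merely on a resolution), which requires pushing forward $\delta'$ via Lemma \ref{lem:trivial-pullback}(1) and checking that the resulting $\delta \in T(X)$ genuinely annihilates every curve class; the statement of Lemma \ref{lem:trivial-pullback} and Lemma \ref{lem:trans-alg} is precisely what makes this work, so the argument should go through cleanly. The only subtlety is that $\delta \in T(X)$ means $\delta$ pairs to zero with $B_2(X,\R)=N_1(X)_\R$, which by the identification for projective varieties with rational singularities is exactly numerical triviality against all curves, so the half-spaces and the negativity of the $\Gamma_i$ transfer without loss.
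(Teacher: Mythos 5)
There is a genuine gap, and it occurs both at the very first step and at what you call the ``crucial point.'' First, the reduction $K_X+B+\bbeta_X\equiv K_X+\Delta+\delta$ with $(X,\Delta)$ klt is Lemma \ref{l-gklt2}(2), whose proof requires $B+\bbeta_X$ to be big: bigness is what allows the nef divisor $N_{X'}$ produced by the Garf--Kirschner decomposition to be traded for a general effective divisor $G'\sim_{\mbR}\epsilon A'+(1-\epsilon)N_{X'}$ and absorbed into an effective klt boundary. The first assertion of Theorem \ref{thm:cone} carries no bigness hypothesis, and a nef class need not be $\mbR$-linearly equivalent to any effective divisor; without bigness you only obtain a generalized pair $(X,B+\mathbf N)$ with a nef $\mbR$-divisor part plus $\delta$, not a klt pair $(X,\Delta)$.

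Second, and more fundamentally, the cone in the statement is the transcendental Mori cone $\NA(X)\subset N_1(X)$, where $N_1(X)$ is the dual of $H^{1,1}_{\BC}(X)$; for a projective variety this is in general strictly larger than the span $B_2(X,\mbR)=N_1(X)_{\mbR}$ of curve classes (it is larger exactly when $T(X)\neq 0$, which is the only interesting case here). The class $\delta\in T(X)$ is by definition orthogonal to curve classes, but not to all of $N_1(X)$, so your claim that $(K_X+B+\bbeta_X)\cdot\gamma=(K_X+\Delta)\cdot\gamma$ for \emph{all} $\gamma\in N_1(X)$ is false; the half-spaces $\NA(X)_{(K_X+B+\bbeta_X)\geq 0}$ and $\NA(X)_{(K_X+\Delta)\geq 0}$ need not coincide, and the algebraic cone theorem for $(X,\Delta)$ only decomposes $\NE(X)$, not $\NA(X)$. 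A $(K_X+B+\bbeta_X)$-negative extremal ray of $\NA(X)$ could a priori be generated by a transcendental current class, and showing that it is in fact generated by a rational curve is the actual content of the theorem. The paper handles this by taking a nef supporting class $\alpha=K_X+B+\bbeta_X+\omega$ of an exposed extremal ray of $\NA(X)$ (the added K\"ahler form also restores bigness, so Lemma \ref{l-gklt2}(2) applies to $K_X+B+\bbeta_X+\omega/2$), proving that $\alpha$ is semi-ample via Corollary \ref{c-tran} --- which rests on the main theorem --- and only then extracting a rational curve on the induced contraction using the algebraic cone theorem. That transcendental input, together with the preliminary passage to a strongly $\mbQ$-factorial model via \cite{HL22}, is entirely absent from your argument; your reduction does correctly yield the finiteness and the curve bound once one already knows the extremal rays are algebraic, but it cannot establish that fact.
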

\begin{proof} 
By standard arguments (see e.g. the proof of \cite[Corollary 5.3]{DHP22}), it is enough to show the following: (i) there is a strongly $\mbQ$-factorial model $f:X'\to X$ such that $K_{X'}+B'+\bbeta_{X'}=f^*(K_X+B+\bbeta_X)$, $B'\geq 0$ is an effective $\mbR$-divisor, and $(X', B'+\bbeta)$ is generalized klt, and (ii) the cone theorem holds for $(X', B'+\bbeta)$.

 First we will establish (i). Let $\nu:X'\to X$ be a log resolution of $(X, B+\bbeta)$ such that $K_{X'}+B'+\bbeta_{X'}=\nu^*(K_X+B+\bbeta_X)$. Let $\bbeta_{X'}\equiv N_{X'}+\delta'$ be the decomposition as in Lemma \ref{lem:h2-decomposition}. Then by Lemma \ref{lem:trans-alg}, $N_{X'}$ is a nef $\mbR$-Cartier divisor on $X'$. Let $\mathbf N:=\overline{N_{X'}}$ be the b-Cartier closure of $N_{X'}$. Now by Lemma \ref{lem:trivial-pullback} there is a $\delta\in T(X)$ such that $\delta'=\nu^*\delta$. Therefore \[K_X+B+\mathbf{N}_X\equiv \nu_*(K_{X'}+B'+\bbeta_{X'}-\nu^*\delta)=(K_X+B+\bbeta_X)-\delta\] is $\mbR$-Cartier. Moreover, we have $K_{X'}+B'+\mathbf N_{X'}=\nu^*(K_X+B+\mathbf N_X)$, and hence $(X, B+\mathbf{N})$ is a generalized klt pair in the usual sense (see \cite{BZ16}). 
 Then by \cite[Theorem 2.9, 3.10]{FS23}, there is a strongly $\mbQ$-factorial model $f:X''\to X$ such that $K_{X''}+B''+\mathbf{N}_{X''}=f^*(K_X+B+\mathbf N_X)$, $B''\geq 0$ is an effective $\mbR$-divisor and $(X'', B''+\mathbf N)$ is a usual generalized klt pair. Since this model is obtained by running an MMP over $X$, we see that $\delta'':=\phi_*\delta'\in T(X'')$ and $\bbeta_{X''}\num \mathbf N_{X''}+\delta''$. In particular, $K_{X''}+B''+\bbeta_{X''}=f^*(K_X+B+\bbeta_X)$. Thus replacing $(X', B'+\bbeta)$ by $(X'', B''+\bbeta)$ and $\nu$ by $f$ we may assume that $(X', B'+\bbeta)$ is a strongly $\mbQ$-factorial generalized klt pair.

Now we will proceed to prove (ii). Arguing as in the proof of \cite[Theorem 1.6]{DHY23}, it suffices to show that if $\omega $ is a K\"ahler form such that $\alpha =K_X+B+\bbeta _X+\omega $ is a nef supporting class of an exposed extremal ray of $\NA(X)$, then there is a rational curve $C\subset X$ such that $0<-(K_X+B+\bbeta _X)\cdot C\leq 2\dim X$. Note that $(X, B+\bbeta+\oomega)$ is a gklt pair, where $\oomega:=\overline{\omega}$. Since $X$ is $\mbQ$-factorial, and $B+\bbeta_X+\boldsymbol{\omega}_X$ is big, by Corollary \ref{c-tran}, $\alpha$ is semiample, i.e. there is a proper surjective morphism $f:X\to Z$ to a normal compact K\"ahler variety $Z$ such that $\alpha=f^*\omega_Z$ for some K\"ahler class $\omega_Z\in H^{1,1}_{\BC}(Z)$. Since $X$ is projective, this implies that there is a curve $C\subset X$ such that $\alpha\cdot C=0$.
 By Lemma \ref{l-gklt2}(2), for any $\epsilon>0$, we may write 
\[K_X+B+\bbeta _X+\epsilon \omega \equiv K_X+\Delta_\epsilon  +\delta_\epsilon,\] 
where $\Delta_\epsilon \geq 0$ is a big $\R$-Cartier divisor such that $(X, \Delta_\eps)$ is klt and $\delta_\epsilon \in T(X)$.
Also, by Lemma \ref{lem:trans-alg} we can write $(1-\epsilon) \omega \equiv A_\epsilon +\gamma_\epsilon$, where $A_\epsilon$ is an ample $\mbR$-Cartier divisor and $\gamma _\epsilon\in T(X)$. 
Therefore, we have $(K_X+\Delta_\epsilon) \cdot C=(\alpha-(1-\eps)\omega)\cdot C =-A_\epsilon\cdot C<0$. Observe that $\alpha\equiv K_X+\Delta_\eps+A_\eps+(\delta_\eps+\gamma_\eps)$, and thus $K_X+\Delta_\eps+A_\eps$ is nef by Lemma \ref{lem:trans-alg}.
Then by the usual cone theorem for $(X,\Delta _\epsilon)$, there is a $(K_X+\Delta_\epsilon +A_\epsilon)$-trivial $(K_X+\Delta_\epsilon)$-negative extremal ray $R$. Therefore, we may assume that $R=\mathbb R^{\geq 0}[\Gamma]$, where $\Gamma\subset X$ is a rational curve such that $\alpha \cdot \Gamma=(K_X+\Delta_\epsilon +A_\epsilon)\cdot \Gamma=0$ and $0<-(K_X+\Delta_\epsilon )\cdot \Gamma\leq 2\dim X$. Note that for any $0<\eps'<\eps$, we have $(K_X+\Delta_{\eps'})\cdot \Gamma=(K_X+\Delta_{\eps'}+\delta_{\eps'})\cdot \Gamma=(K_X+\Delta_{\eps}+\delta_\eps)\cdot \Gamma-(\eps-\eps')\omega <0$, i.e. $R$ is a $(K_X+\Delta_{\eps'})$-negative extremal ray for every $0<\eps'<\eps$. Then by the usual cone theorem again, there is a rational curve $\Gamma_{\eps'}$ contained in $R$ such that $0<-(K_X+\Delta_{\eps'})\cdot \Gamma_{\eps'}\leq 2\dim X$.\\


Let
$H$ be an ample Cartier divisor on $X$. Consider the set $\{H\cdot C\}\subset \mbZ^+$, where $C\subset X$ is a rational curve such that $[C]\in R$. There is a rational curve $\bar \Gamma$ such that $H\cdot\bar\Gamma$ is the minimum of $\{H\cdot C\}$. Thus for any rational curve $C\subset X$ with $[C]\in R$ we have $H\cdot C\geq H\cdot \bar\Gamma$, and so there is a rational number $\lambda_C:=\frac{H\cdot C}{H\cdot\bar\Gamma}\geq 1$ such that $[C]=\lambda_C[\bar\Gamma]$ for all such rational curves. Therefore, for each $\eps'\in (0,\eps)$, there is a $\lambda_{\eps'}\geq 1$ such that $[\Gamma_{\eps'}]=\lambda_{\eps'}[\bar\Gamma]$. In particular, we have $-(K_X+\Delta_{\eps'})\cdot \bar\Gamma\leq -(K_X+\Delta_{\eps'})\cdot \Gamma_{\eps'}\leq 2\dim X$ for all $\eps'\in (0, \eps)$. Thus we have
\[0< -(K_X+B+\bbeta _X)\cdot \bar\Gamma= -(K_X+\Delta_{\epsilon'})\cdot \bar\Gamma+\epsilon' \omega\cdot \bar\Gamma\leq 2\dim X+\epsilon' \omega\cdot \bar\Gamma.\] Taking the limit as $\epsilon '\to 0^+$, we get $0<-(K_X+B+\bbeta _X)\cdot \bar\Gamma\leq 2\dim X$.\\

Now if $B+\bbeta_X$ is big, then using Lemma \ref{l-gklt2}(2) and \cite[Exercise 5.8]{HK10} we see that $I$ is a finite set.

\end{proof}

  \begin{corollary}
      Let $(X,B)$ be a projective klt pair and $\beta\in H^{1,1}_{\rm BC}(X)$ a nef class such that $\beta \cdot C\geq 2\dim X$ for every curve $C\subset X$. Then $K_X+B+\beta$ is nef. 
  \end{corollary}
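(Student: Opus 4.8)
The plan is to obtain the decomposition of the Mori cone with respect to $K_X+B$ from the Cone Theorem already proved in this article, and then pair it against the nef class $\beta$. Since $\beta\in H^{1,1}_{\BC}(X)$ is nef, Lemma \ref{lem:duality} shows that $K_X+B+\beta$ is nef if and only if $(K_X+B+\beta)\cdot\gamma\geq 0$ for every $\gamma\in\NA(X)$; hence it suffices to check nonnegativity on a generating set of the cone.

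First I would apply Theorem \ref{thm:cone} to the generalized klt pair $(X,B)$, i.e.\ with trivial nef part $\bbeta=0$, which is legitimate since a projective klt pair is in particular a generalized klt pair. This produces rational curves $\{\Gamma_j\}$ with
\[\NA(X)=\NA(X)_{(K_X+B)\geq 0}+\sum_{j}\R^+[\Gamma_j],\qquad 0<-(K_X+B)\cdot\Gamma_j\leq 2\dim X.\]
It then remains to verify that $K_X+B+\beta$ is nonnegative on each type of generator.

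On the subcone $\NA(X)_{(K_X+B)\geq 0}$ this is immediate, since there $K_X+B$ is nonnegative and $\beta$ is nonnegative (being nef). On an extremal generator $\Gamma_j$ I would combine the length bound with the hypothesis $\beta\cdot\Gamma_j\geq 2\dim X$:
\[(K_X+B+\beta)\cdot\Gamma_j=(K_X+B)\cdot\Gamma_j+\beta\cdot\Gamma_j\geq -2\dim X+2\dim X=0.\]
As $K_X+B+\beta$ is thus a linear functional nonnegative on every generator of $\NA(X)$, it is nonnegative on the whole cone by continuity, and therefore nef.

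The argument is routine once set up, and the delicate points are essentially bookkeeping: one must take the decomposition of $\NA(X)$ relative to $K_X+B$ \emph{alone}, and exploit that the length constant furnished by the Cone Theorem is exactly the constant $2\dim X$ appearing in the hypothesis, so that the two estimates cancel precisely. I note that one cannot instead apply Theorem \ref{thm:cone} to $(X,B+\bbeta)$ with $\bbeta_X=\beta$, because that only bounds $-(K_X+B+\beta)\cdot\Gamma_i$ from above and supplies no lower bound on $(K_X+B)\cdot\Gamma_i$, which is precisely what the cancellation requires.
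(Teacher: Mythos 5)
Your proof is correct, but it does not follow the paper's own route, and the difference is worth spelling out. The paper's proof is a one-liner: it takes a positive current $\theta$ with local potentials in the class $\beta$, observes that $(X,B+\boldsymbol{\theta})$ is a generalized klt pair, and asserts that the statement ``follows immediately'' from Theorem \ref{thm:cone} applied to $(X,B+\boldsymbol{\theta})$ --- i.e.\ to the pair whose nef part \emph{is} $\beta$. You instead apply Theorem \ref{thm:cone} to $(X,B)$ with trivial nef part, so that the length bound reads $0<-(K_X+B)\cdot\Gamma_j\le 2\dim X$, and then conclude by pairing $K_X+B+\beta$ against the two kinds of generators of $\NA(X)$ and invoking the duality $\Nef(X)=\NA(X)^*$ of Lemma \ref{lem:duality}. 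Your closing remark is apt: the decomposition relative to $K_X+B+\beta$ only bounds $-(K_X+B+\beta)\cdot\Gamma_i$ from above, which produces no contradiction with $\beta\cdot\Gamma_i\ge 2\dim X$ (a curve with $(K_X+B)\cdot\Gamma_i<-2\dim X$ is not a priori excluded), so the paper's instantiation does not yield the claim without further unwinding of the proof of the cone theorem, whereas your instantiation makes the constants cancel exactly. In short: same key theorem, different and better-chosen input pair, plus the explicit duality step that the paper leaves implicit; your version is the one I would keep.
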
  
\begin{proof}
Let $\theta$ be a closed positive $(1, 1)$ current with local potentials contained in the class $\beta\in H^{1,1}_{\BC}(X)$. Then $\boldsymbol{\theta}:=\overline{\theta}$ defines a b-(1,1) current on $X$, and from the negativity lemma it follows that $(X, B+\boldsymbol{\theta})$ is a generalized klt pair. Then the result follows immediately from the cone Theorem \ref{thm:cone} applied to $(X, B+\boldsymbol{\theta})$.
\end{proof}

The next result answers a question of Valentino Tosatti.
\begin{theorem}
Let $p:X\to Y$ be a proper surjective morphism with connected fibers between normal compact K\"ahler varieties. Let $B$ be an effective $\mbQ$-divisor on $X$ such that $(X, B)$ is a klt pair and $K_X+B\num p^*\omega$ for some K\"ahler form $\omega$ on $Y$. Then $K_X+B$ is a semi-ample $\Q$-divisor.
\end{theorem}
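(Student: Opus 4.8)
The plan is to descend $K_X+B$ along $p$ by a canonical bundle formula, to observe that the base $Y$ is forced to be projective, and then to read off semi-ampleness from the descended class.

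First I would establish that $Y$ is projective. Restricting to a general fibre $F$ of $p$ gives $(K_X+B)|_F=K_F+B_F\equiv (p^*\omega)|_F\equiv 0$, so $(F,B_F)$ is klt with $K_F+B_F$ numerically trivial; in particular $\omega$ pairs to zero with every $p$-vertical curve and $[K_X+B]=p^*[\omega]$ in $H^{1,1}_{\BC}(X)$. For a proper surjective morphism the pullback $p^*\colon H^2(Y,\Q)\to H^2(X,\Q)$ is injective (restrict to a multisection $Z\subset X$ that is finite surjective over $Y$, where $(p|_Z)_*(p|_Z)^*$ is multiplication by $\deg(p|_Z)>0$), and since $\Im(p^*_\Q)$ is a rational subspace one has $\Im(p^*_\R)\cap H^2(X,\Q)=\Im(p^*_\Q)$. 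As $K_X+B$ is $\Q$-Cartier, $[K_X+B]\in H^2(X,\Q)$, whence $[\omega]\in H^2(Y,\Q)$; that is, $\omega$ is a rational Kähler class. A normal compact Kähler variety carrying a rational Kähler class is projective, so $Y$ is projective and $\omega\equiv A$ for some ample $\Q$-divisor $A$.

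Next I would apply the Kähler canonical bundle formula \cite[Theorem 2.3]{HP24} to $(X,B)$ and the fibration $p$, using that $K_X+B$ is relatively numerically trivial with general fibre klt log Calabi--Yau. This yields a generalized klt pair $(Y,B_Y+\mathbf{M})$ together with a descent $K_X+B\sim_\Q p^*(K_Y+B_Y+\mathbf{M}_Y)$, and by injectivity of $p^*$ the class $K_Y+B_Y+\mathbf{M}_Y\equiv\omega\equiv A$ is ample. On the projective variety $Y$ a $\Q$-Cartier class numerically equivalent to an ample one is ample, so $K_Y+B_Y+\mathbf{M}_Y$ is an ample $\Q$-divisor; therefore $K_X+B\sim_\Q p^*(K_Y+B_Y+\mathbf{M}_Y)$ is the pullback of an ample $\Q$-divisor along the morphism $p$, hence semi-ample.

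The \emph{main obstacle} is the upgrade, in the middle step, from a merely numerical descent $K_X+B\equiv p^*(K_Y+B_Y+\mathbf{M}_Y)$ (which is what the canonical bundle formula gives directly, and which already suffices for the numerical notion of semi-ampleness used elsewhere in the paper) to an honest $\Q$-linear equivalence with a $\Q$-divisor moduli part. This is exactly relative abundance for the klt log Calabi--Yau fibration $p$ — equivalently, the $\Q$-linear triviality of the numerically trivial divisor $K_F+B_F$ on a general fibre — in the Kähler category, and it is precisely this that must be extracted from \cite[Theorem 2.3]{HP24}; once it is in hand, projectivity of $Y$ turns the descended class into an ample $\Q$-divisor and the conclusion is immediate.
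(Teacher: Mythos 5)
Your overall architecture (descend $K_X+B$ along $p$, then get ampleness of the descended class from injectivity of $p^*$) matches the paper's endgame, but the proposal has a genuine gap exactly where you flag the ``main obstacle,'' and the citation you lean on does not close it. The theorem \cite[Theorem 2.3]{HP24}, as used elsewhere in this paper, produces only a \emph{numerical} descent $K_X+B\num p^*(K_Y+B_Y+\bbeta^Y_Y)$ with a transcendental b-$(1,1)$ moduli part on a \emph{generalized} pair $(Y,B_Y+\bbeta^Y)$; it gives neither a $\Q$-linear equivalence nor a $\Q$-divisor moduli part. What you actually need first is that the numerically trivial klt log Calabi--Yau general fibre satisfies $K_F+B_F\sim_{\Q}0$ (not merely $\num 0$) --- this is precisely the $\dim Y=0$ case of the theorem being proved, and it is nontrivial in the K\"ahler category. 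The paper proves it via generic vanishing: it passes to a resolution, applies \cite[Theorem D]{Wang21} to the Albanese morphism to find a torsion element of $\Pic^0$ in $V^0(\mcO_X(m(K_X+B)))$, and concludes $m(K_X+B)\sim 0$. Your proposal contains no substitute for this step.

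Second, even granting fibrewise $\Q$-triviality, you still need $K_X+B\sim_{\Q}p^*A$ for an honest $\Q$-Cartier $A$, i.e.\ that some multiple of $K_X+B$ descends as a line bundle. The paper does not get this from a canonical bundle formula at all: it works locally over a Stein base, invokes finite generation of the relative canonical ring \cite[Theorem 1.3]{DHP22} to form $Y'=\operatorname{Proj}R(X/Y,K_X+B)$, writes $\mu^*|m(K_X+B)|=|M|+G$, kills $G$ using the negativity lemma (Lemma \ref{l-1}, since $G$ is $p$-exceptional and relatively numerically trivial), and then uses the rigidity lemma to identify $Y\cong Y'$ and conclude $\mcO_X(m(K_X+B))\cong p^*L$. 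Only at that point does injectivity of $p^*$ on $H^{1,1}_{\BC}$ give $c_1(L)=[\omega]$, hence ampleness --- the one step your proposal and the paper share. Your preliminary claim that $Y$ is projective is also shakier than stated for singular $Y$: rationality of the image of $[\omega]$ in $H^2(Y,\R)$ does not by itself produce a line bundle (one must still kill the obstruction in $H^2(Y,\mcO_Y)$), and the paper never argues this way; projectivity of $Y$ comes out at the end as a consequence of exhibiting $L$. As written, the proposal is a plan whose two load-bearing steps are deferred to a reference that does not contain them.
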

\begin{proof} 
Suppose that $\dim Y=0$, i.e. $K_X+B\equiv 0$, then there is an integer such that $m(K_X+B)$ is Cartier, and replacing $m$ by a multiple, we may assume that $\OO _X(m(K_X+B))\in {\rm Pic }^0(X)$.
In particular, the following set is non-empty
\[
V^0(\OO _X(m(K_X+B))):=\{P\in \Pic^0(X)\;|\; h^0(X,\mcO_X(m(K_X+B))\otimes P)>0\}\ne \emptyset.
\]
Let $f:\tilde X\to X$ be a resolution of singularities of $X$ and $K_{\widetilde X}+\Gamma=f^*(K_X+B)+E$, where $\Gamma$ and $E$ are effective $\mbQ$-divisors without a common components and $f_*\Gamma=B$ and $f_*E=0$. Clearly, we have $f_*\mcO_{\widetilde X}(m(K_{\widetilde X}+\Gamma))=\mcO_X(m(K_X+B))$. Now applying  \cite[Theorem D]{Wang21} to the Albanese morphism $\tilde a: \widetilde X\to \Alb(\widetilde X)$ we see that $V^0(\mcO_{\widetilde X}(m(K_{\widetilde X}+\Gamma)))$ contains a torsion element of $\Pic^0(\widetilde X)$. Since $X$ has rational singularities, from the proof of \cite[Lemma 8.1]{Kaw85} it follows that $f^*:\Pic^0(X)\to \Pic^0(\widetilde X)$ is an isomorphism, and thus $V^0(\OO _X(m(K_X+B)))$ contains a torsion element of $\Pic^0(X)$. In particular, replacing $m$ by a higher multiple we may assume that $H^0(X, \mcO_X(m(K_X+B)))\neq 0$. However, since $K_X+B\num 0$, it follows that $m(K_X+B)\sim 0$. 


Assume now that $\dim Y>0$. 
Note that it suffices to show that $K_X+B\sim _\Q p^* A$ for some $\Q$-Cartier divisor $A$ on $Y$; indeed, if this is the case, then from injectivty of $p^*: H^{1,1}_{\BC}(Y)\to H^{1,1}_{\BC}(X)$ it follows that $c_1(A)=[\omega]$ in $H^{1,1}_{\BC}(Y)$, and hence $A$ is ample. Now, since this question is local over $Y$, we may assume from now on that $Y$ is a relatively compact Stein open set.
Let $F$ be a general fiber. By adjunction $K_F+B_F=(K_X+B)|_F \equiv 0$, then by what we have just proved $K_F+B_F\sim _\Q 0$. Since $\C$ is uncountable, $p_*\OO _X(m(K_X+B))\ne 0$ for some $m>0$. 
Then replacing $m$ by a higher multiple and using \cite[Theorem 1.3]{DHP22} we may assume that $R(X/Y,K_X+B)$ is generated in degree $m$. Let $Y':={\rm Proj}R(X/Y,K_X+B)$ and $\nu: Y'\to Y$ the canonical model. Let $p':X\dasharrow Y'$ be the induced meromorphic map, then for some resolution $\mu :X'\to X$, we may assume that $q:X'\to Y'$ is a morphism and that $\mu ^*|m(K_X+B)|=|M|+G$, where $|M|$ is base point free and $\mcO_{X'}(M)\cong  q^* \OO _{Y'}(1)$.
Note that for any fixed $r$ and any $k\gg 0$ we have \[\nu _* \OO _{Y'}(k)\cong  (\nu \circ q)_*\OO _{X'}(kM)\cong (\nu \circ q)_*\OO _{X'}(kM+rG)\cong  \nu _*(q_*\OO _{X'}(rG)\otimes \OO _{Y'}(k)).   \]
Since $\OO _{Y'}(1)$ is ample over $Y$, it follows from the above relation that $q_*\OO _{X'}(rG)\cong \OO _{Y'}$. In particular, $G$ is $q$-exceptional, i.e.  given any irreducible component $P$ of $G$ such that $Q=q(P)$ is a divisor on $Y$, then there is a divisor $P'\not \subset {\rm Supp}(G)$ such that $Q=q(P')$. Since $\mcO_{X'}(M)\cong q^*\OO _{Y'}(1)$, then \[G\sim _{Y'}G+M=\mu ^*(m(K_X+B))\equiv \mu ^*p^*\omega \equiv _{Y'}0,\]  and hence by Lemma \ref{l-1}, $G=0$. Let $C$ be a $\mu$-exceptional curve which is not $q$-exceptional, then \[0=m\mu ^*(K_X+B)\cdot C=M\cdot C>0\] which is impossible. Thus every $\mu$-exceptional curve is $q$-exceptional and so by the rigidity lemma, $X\to Y'$ is a morphism, i.e. we may assume that $X=X'$, $G=0$, and $p^*\omega \equiv m(K_X+B)\sim {p'}^*\OO _{Y'}(1)$. Since $\OO _{Y'}(1)$ is ample over $Y$, again by the rigidity lemma we have $Y\cong Y'$. Consequently, $L:=p_* \OO _X(m(K_X+B))$ is a line bundle on $Y$ and $\mcO_X(m(K_X+B))\cong p^*L$. This completes our proof.
\end{proof}

\bibliographystyle{hep}
\bibliography{4foldreferences}

\begin{thebibliography}{BCHM10}

\bibitem[Amb05]{Amb05}
F.~Ambro, \textsl{ The moduli {$b$}-divisor of an lc-trivial fibration},
\newblock Compos. Math. \textbf{ 141}(2), 385--403 (2005).

\bibitem[BCHM10]{BCHM10}
C.~Birkar, P.~Cascini, C.~D. Hacon and J.~McKernan, \textsl{ Existence of
  minimal models for varieties of log general type},
\newblock J. Amer. Math. Soc. \textbf{ 23}(2), 405--468 (2010).

\bibitem[BDPP13]{BDPP13}
S.~Boucksom, J.-P. Demailly, M.~P{\u{a}}un and T.~Peternell, \textsl{ The
  pseudo-effective cone of a compact {K}\"ahler manifold and varieties of
  negative {K}odaira dimension},
\newblock J. Algebraic Geom. \textbf{ 22}(2), 201--248 (2013).

\bibitem[Bir12]{Bir12}
C.~Birkar, \textsl{ Existence of log canonical flips and a special LMMP},
\newblock Publications Mathematiques de l'IHES \textbf{ 115}, 325--368 (2012).

\bibitem[Bir16]{Bir16}
C.~Birkar, \textsl{ Existence of flips and minimal models for 3-folds in char
  {$p$}},
\newblock Ann. Sci. \'Ec. Norm. Sup\'er. (4) \textbf{ 49}(1), 169--212 (2016).

\bibitem[Bir21]{Bir21}
C.~Birkar, \textsl{ Generalised pairs in birational geometry},
\newblock EMS Surv. Math. Sci. \textbf{ 8}(1-2), 5--24 (2021).

\bibitem[Bou02]{Bou02}
S.~Boucksom, \textsl{ On the volume of a line bundle},
\newblock Internat. J. Math. \textbf{ 13}(10), 1043--1063 (2002).

\bibitem[BZ16]{BZ16}
C.~Birkar and D.-Q. Zhang, \textsl{ Effectivity of {I}itaka fibrations and
  pluricanonical systems of polarized pairs},
\newblock Publ. Math. Inst. Hautes \'{E}tudes Sci. \textbf{ 123}, 283--331
  (2016).

\bibitem[CH20]{CH20}
J.~Cao and A.~H\"{o}ring, \textsl{ Rational curves on compact {K}\"{a}hler
  manifolds},
\newblock J. Differential Geom. \textbf{ 114}(1), 1--39 (2020).

\bibitem[Deb01]{Deb01}
O.~Debarre,
\newblock \textsl{ Higher-dimensional algebraic geometry},
\newblock Universitext, Springer-Verlag, New York, 2001.

\bibitem[Dem92]{Dem92}
J.-P. Demailly, \textsl{ Regularization of closed positive currents and
  intersection theory},
\newblock J. Algebraic Geom. \textbf{ 1}(3), 361--409 (1992).

\bibitem[DH20]{DH20}
O.~{Das} and C.~{Hacon}, \textsl{ {The log minimal model program for K{\"a}hler
  $3$-folds}},
\newblock To appear in the \emph{Journal of Differential Geometry} ,
  arXiv:2009.05924 (September 2020), {2009.05924}.

\bibitem[DH23]{DH23}
O.~{Das} and C.~{Hacon}, \textsl{ {On the Minimal Model Program for K{\"a}hler
  $3$-folds}},
\newblock arXiv e-prints , arXiv:2306.11708 (June 2023), {2306.11708}.

\bibitem[DHP24]{DHP22}
O.~Das, C.~Hacon and M.~P\u{a}un, \textsl{ On the 4-dimensional minimal model
  program for {K}\"{a}hler varieties},
\newblock Adv. Math. \textbf{ 443}, Paper No. 109615 (2024).

\bibitem[DHY23]{DHY23}
O.~{Das}, C.~{Hacon} and J.~I. {Y{\'a}{\~n}ez}, \textsl{ {MMP for Generalized
  Pairs on K{\"a}hler 3-folds}},
\newblock arXiv e-prints , arXiv:2305.00524v1 (April 2023), {2305.00524v1}.

\bibitem[DP04]{DP04}
J.-P. Demailly and M.~Paun, \textsl{ Numerical characterization of the
  {K}\"{a}hler cone of a compact {K}\"{a}hler manifold},
\newblock Ann. of Math. (2) \textbf{ 159}(3), 1247--1274 (2004).

\bibitem[FG12]{FG12}
O.~Fujino and Y.~Gongyo, \textsl{ On canonical bundle formulas and
  subadjunctions},
\newblock Michigan Math. J. \textbf{ 61}(2), 255--264 (2012).

\bibitem[Fil20]{Filipazzi20}
S.~Filipazzi, \textsl{ On a generalized canonical bundle formula and
  generalized adjunction},
\newblock Ann. Sc. Norm. Super. Pisa Cl. Sci. (5) \textbf{ 21}, 1187--1221
  (2020).

\bibitem[FS23]{FS23}
S.~Filipazzi and R.~Svaldi, \textsl{ On the connectedness principle and dual
  complexes for generalized pairs},
\newblock Forum Math. Sigma \textbf{ 11}, Paper No. e33, 39 (2023).

\bibitem[FT18]{FT18}
S.~Filip and V.~Tosatti, \textsl{ Smooth and rough positive currents},
\newblock Ann. Inst. Fourier (Grenoble) \textbf{ 68}(7), 2981--2999 (2018).

\bibitem[GK20]{GK20}
P.~Graf and T.~Kirschner, \textsl{ Finite quotients of three-dimensional
  complex tori},
\newblock Ann. Inst. Fourier (Grenoble) \textbf{ 70}(2), 881--914 (2020).

\bibitem[Gue20]{Gue20}
H.~Guenancia, \textsl{ Families of conic {K}\"{a}hler-{E}instein metrics},
\newblock Math. Ann. \textbf{ 376}(1-2), 1--37 (2020).

\bibitem[HK10]{HK10}
C.~D. Hacon and S.~J. Kov\'{a}cs,
\newblock \textsl{ Classification of higher dimensional algebraic varieties},
  volume~41 of \textsl{ Oberwolfach Seminars},
\newblock Birkh\"{a}user Verlag, Basel, 2010.

\bibitem[HM07]{HM07}
C.~D. Hacon and J.~Mckernan, \textsl{ On {S}hokurov's rational connectedness
  conjecture},
\newblock Duke Math. J. \textbf{ 138}(1), 119--136 (2007).

\bibitem[HP16]{HP16}
A.~H\"oring and T.~Peternell, \textsl{ Minimal models for {K}\"ahler
  threefolds},
\newblock Invent. Math. \textbf{ 203}(1), 217--264 (2016).

\bibitem[HP24]{HP24}
C.~{Hacon} and M.~{Paun}, \textsl{ {On the Canonical Bundle Formula and
  Adjunction for Generalized Kaehler Pairs}},
\newblock arXiv e-prints , arXiv:2404.12007 (April 2024), {2404.12007}.

\bibitem[Kaw85]{Kaw85}
Y.~Kawamata, \textsl{ Minimal models and the {K}odaira dimension of algebraic
  fiber spaces},
\newblock J. Reine Angew. Math. \textbf{ 363}, 1--46 (1985).

\bibitem[KM92]{KM92}
J.~Koll{\'a}r and S.~Mori, \textsl{ Classification of Three-Dimensional Flips},
\newblock Journal of the American Mathematical Society \textbf{ 5}, 533--704
  (1992).

\bibitem[KM98]{KM98}
J.~Koll{\'a}r and S.~Mori,
\newblock \textsl{ Birational geometry of algebraic varieties}, volume 134 of
  \textsl{ Cambridge Tracts in Mathematics},
\newblock Cambridge University Press, Cambridge, 1998,
\newblock With the collaboration of C. H. Clemens and A. Corti, Translated from
  the 1998 Japanese original.

\bibitem[Nam02]{Nam02}
Y.~Namikawa, \textsl{ Projectivity criterion of {M}oishezon spaces and density
  of projective symplectic varieties},
\newblock Internat. J. Math. \textbf{ 13}(2), 125--135 (2002).

\bibitem[Tos12]{Tos12}
V.~Tosatti, \textsl{ Calabi-Yau manifolds and their degenerations},
\newblock Annals of the New York Academy of Sciences \textbf{ 1260}(1), 8--13
  (2012).

\bibitem[{Tos}23]{Tos23}
V.~{Tosatti}, \textsl{ {Semipositive line bundles and (1,1)-classes}},
\newblock arXiv e-prints , arXiv:2309.00580 (September 2023), {2309.00580}.

\bibitem[Wan21]{Wang21}
J.~Wang, \textsl{ On the {I}itaka conjecture {$C_{n,m}$} for {K}\"ahler fibre
  spaces},
\newblock Ann. Fac. Sci. Toulouse Math. (6) \textbf{ 30}(4), 813--897 (2021).

\bibitem[WN19]{Nys19}
D.~Witt~Nystr\"{o}m, \textsl{ Duality between the pseudoeffective and the
  movable cone on a projective manifold},
\newblock J. Amer. Math. Soc. \textbf{ 32}(3), 675--689 (2019),
\newblock With an appendix by S\'{e}bastien Boucksom.

\end{thebibliography}

\end{document}